\AddToHook{package/hyperref/before}{\RequirePackage{float}}
\PassOptionsToPackage{hyperfootnotes=false}{hyperref}
\documentclass[a4paper,fleqn]{cas-sc}

\usepackage[authoryear,longnamesfirst]{natbib}
\usepackage{mathtools}
\usepackage{algorithm}
\usepackage{algpseudocode}
\usepackage{placeins}
\usepackage[caption=false]{subfig}
\ExplSyntaxOn
\cs_new_eq:NN \qbe_printorcid: \printorcid
\RenewDocumentCommand \printorcid {}
  { \seq_if_empty:NF \g_stm_orcid_seq { \qbe_printorcid: } }
\ExplSyntaxOff

\DeclareCaptionLabelFormat{figurepanel}{\thefigure#2}
\numberwithin{algorithm}{section}
\algrenewcommand{\algorithmicrequire}{\textbf{Input:}}

\def\tsc#1{\csdef{#1}{\textsc{\lowercase{#1}}\xspace}}
\tsc{WGM}
\tsc{QE}
\newtheorem{theorem}{Theorem}[section]
\newdefinition{definition}[theorem]{Definition}

\newdefinition{remark}[theorem]{Remark}
\newproof{proof}{Proof}

\providecommand{\Kn}{\mathrm{Kn}}
\providecommand{\dif}{\mathrm{d}}
\providecommand{\Lq}{\widetilde{\mathcal{L}}_q}

\newcommand{\bm}[1]{\boldsymbol{#1}}
\newcommand{\pdfFrac}[2]{\frac{\partial #1}{\partial #2}}

\begin{document}
\let\WriteBookmarks\relax
\def\floatpagepagefraction{1}
\def\textpagefraction{.001}

% Short title
\shorttitle{Effective Linear Approximation of the Quantum Boltzmann Operator}    

% Short author
\shortauthors{Hu et al.}  

% Main title of the paper
\title [mode = title]{A Structure-Preserving Effective Linear Approximation of the Quantum Boltzmann Collision Operator}  

% Title footnote mark
% eg: \tnotemark[1]
%\tnotemark[1] 

% Title footnote 1.
% eg: \tnotetext[1]{Title footnote text}
%\tnotetext[1]{} 

% First author
%
% Options: Use if required
% eg: \author[1,3]{Author Name}[type=editor,
%       style=chinese,
%       auid=000,
%       bioid=1,
%       prefix=Sir,
%       orcid=0000-0000-0000-0000,
%       facebook=<facebook id>,
%       twitter=<twitter id>,
%       linkedin=<linkedin id>,
%       gplus=<gplus id>]

\author[1]{Shuang Hu}[style=chinese]%[<options>]

% Corresponding author indication
%\cormark[1]

% Footnote of the first author

% Email id of the first author
\ead{hsmathna@gmail.com}

% URL of the first author
%\ead[url]{}

% Credit authorship
% eg: \credit{Conceptualization of this study, Methodology, Software}
\credit{Methodology, Formal analysis, Software, Visualization, Writing -- original draft}

% Address/affiliation
\affiliation[1]{organization={Zhejiang University},
            addressline={866 Yuhangtang Road}, 
            city={Hangzhou},
%          citysep={}, % Uncomment if no comma needed between city and postcode
            postcode={310058}, 
            state={Zhejiang},
            country={China}}

\author[2]{Bo Lin}[style=chinese]%[]

\cormark[1]

\cortext[1]{Corresponding author}

% Footnote of the second author

% Email id of the second author
\ead{linbo@neps.hrl.ac.cn}

% URL of the second author
%\ead[url]{}

% Credit authorship
\credit{Validation, Writing -- review \& editing, Resources}

% Address/affiliation
\affiliation[2]{organization={Beijing Huairou Laboratory},
            addressline={8 Yangyan East 1st Road}, 
            city={Beijing},
%          citysep={}, % Uncomment if no comma needed between city and postcode
            postcode={101400}, 
            %state={},
            country={China}}

\author[3]{Zhenning Cai}[style=chinese]

%\cormark[1]

% Corresponding author text
% \cortext[1]{Corresponding author}

% Email id of the second author
\ead{matcz@nus.edu.sg}

% Credit authorship
\credit{Supervision, Conceptualization, Project administration, Writing -- review \& editing}

% Address/affiliation
\affiliation[3]{organization={National University of Singapore},
            addressline={Level 4, Block S17, 10 Lower Kent Ridge Road}, 
            city={Singapore},
%          citysep={}, % Uncomment if no comma needed between city and postcode
            postcode={119076}, 
            %state={},
            country={Singapore}}

% Footnote text

% For a title note without a number/mark
%\nonumnote{}

% Here goes the abstract
\begin{abstract}
We construct a structure-preserving effective linear approximation of the quantum Boltzmann collision operator for Bose and Fermi gases in the normal phase.
% An equilibrium-weighted radial projection reduces the four-velocity quantum factor to a function of relative speed.
For variable hard-sphere kernels, the resulting Fourier spectral evaluation costs $O(N_v^4\log N_v)$, with $N_v$ velocity nodes per direction and no angular quadrature factor.
The continuous operator conserves mass, momentum and energy and satisfies a linearized $H$-theorem.
We also use the operator at local quantum equilibrium as an approximate collision Jacobian in a modified Newton solver that retains the full nonlinear residual.
Numerical tests show smaller errors than the quantum BGK model in homogeneous relaxation and fewer full collision evaluations for steady Couette and Fourier flows than the same Newton solver using a quantum BGK Jacobian.
\nocite{*}
\end{abstract}

% Use if graphical abstract is present
%\begin{graphicalabstract}
%\includegraphics{}
%\end{graphicalabstract}

% Research highlights
% \begin{highlights}
% \item A conservative linear quantum collision approximation obeys a linearized H-theorem.
% \item Fast spectral evaluation avoids angular quadrature for variable hard-sphere kernels.
% \item A modified Newton solver uses an approximate collision Jacobian for steady flows.
% \end{highlights}

% Keywords
% Each keyword is seperated by \sep
\begin{keywords}
Quantum Boltzmann equation \sep Effective linear approximation \sep Radial projection \sep Fast Fourier spectral method \sep Modified Newton method
\end{keywords}

\maketitle

% Main text
\section{Introduction}

% Introduction outline: 9 planned paragraphs.
% Reference introductions read:
% Yin--Cai--Wang, SISC 48(1), B83--B111 (2026), DOI 10.1137/25M1737791;
% introduction available at https://arxiv.org/html/2503.09580v2#S1.
% Wu, JCP 399, 108924 (2019), DOI 10.1016/j.jcp.2019.108924;
% accepted manuscript, Section 1, pp. 2--3:
% https://strathprints.strath.ac.uk/69591/1/Wu_JCP2019_A_fast_spectral_method_for_the_Uehling_Uhlenbeck_equation.pdf
% Organization: use Wu's quantum-kinetic motivation and Yin's progression from
% collision evaluation to reduced models, fast linearization and steady solvers.
% The notes below are an outline, not draft prose; each citation is assigned a role.
% All citation keys are present in cas-refs.bib. Move the planned citations into
% the corresponding sentences when drafting the Introduction.

% Para 1: Physical motivation and scope.
% 目的：说明为什么量子气体需要动理学描述，并限定本文研究对象。
% 内容：从超冷 Bose/Fermi 气体实验引入量子统计对碰撞弛豫的影响；
% 介绍 Uehling--Uhlenbeck 方程（本文统一称 QBE），指出 Bose enhancement
% 与 Pauli blocking 区别于经典碰撞；落到正常相、单组分量子气体的数值问题。
% 引文落点：实验背景 \citep{anderson1995observation,demarco1999onset}；
% 碰撞阻塞的直接实验依据 \citep{demarco2001pauli}；
% QBE 与量子输运的奠基工作 \citep{uehling1933transport}。
% 范围：BEC 实验仅作背景；本文 Bose 情形为 0<z<1，不声称处理凝聚体。
% 过渡：可靠的非平衡预测需要可负担的碰撞计算。

The realization of Bose--Einstein condensation and Fermi degeneracy in dilute atomic gases has motivated the study of quantum effects on collisional relaxation and transport~\citep{anderson1995observation,demarco1999onset}.
In the normal phase, these processes can be described by the quantum Boltzmann equation (QBE), also known as the Uehling--Uhlenbeck equation~\citep{uehling1933transport}.
Its collision operator accounts for the occupation of the final states, which enhances scattering for bosons and suppresses it for fermions.
The latter effect, Pauli blocking, has been observed in the thermal relaxation of a degenerate Fermi gas~\citep{demarco2001pauli}.
Here we focus on efficient numerical methods for the QBE of single-species Bose and Fermi gases.

% Para 2: Numerical obstacles and early computational routes.
% 目的：把物理需求转化为具体数值困难，并交代谱方法之外的相关路线。
% 内容：高维相空间、非局域碰撞积分及占据数因子使计算昂贵；
% 介绍量子 DSMC 对分布采样的需求，再提能量空间的约化确定性方法。
% 引文落点：量子 DSMC 及采样/速度离散问题 \citep{garcia2003direct}；
% 能量空间中的保守、熵相容算法 \citep{markowich2005fast}。
% 范围：后者依赖能量空间约化，不能表述为任意各向异性速度分布的通用求解器；
% 不把随机法概括为普遍低效，而以本文需要确定性碰撞响应作为转向谱方法的理由。
The QBE evolves a distribution $f(t,\bm{x},\bm{v})$ over a six-dimensional phase space, with $\bm{x},\bm{v}\in\mathbb{R}^3$.
With $N_v$ nodes per velocity direction and $M$ quadrature nodes on $\mathbb{S}^2$, direct evaluation of the collision operator involves $O(MN_v^6)$ quadrature terms per spatial point.
Direct simulation Monte Carlo (DSMC) provides a particle method for the QBE, but collision sampling requires estimates of the final-state occupations.
In the velocity-cell formulation of \citet{garcia2003direct}, these estimates are obtained from particle counts, so their accuracy depends jointly on the particle number and velocity-space resolution.
For the boson equation, \citet{markowich2005fast} developed fast deterministic schemes under an ergodic reduction to energy space; their semidiscrete form conserves mass and energy and satisfies a discrete entropy inequality.
Transport problems involving shear stress or heat flux, however, require resolving the velocity anisotropy excluded by this reduction.
Fourier spectral methods address this need by discretizing the collision operator in the full velocity space.

% Para 3: Classical Fourier spectral foundations.
% 目的：说明本文采用 Fourier 谱方法的基础及其计算优势。
% 内容：从 Fourier--Galerkin 碰撞离散，讲到利用卷积和 FFT 的快速算法，
% 再到一般碰撞核；简要区分谱精度与离散矩守恒，避免把两者混为一谈。
% 引文落点：早期 Fourier 谱框架 \citep{pareschi2000numerical}；
% 快速卷积分解 \citep{mouhot2006fast}；
% 一般碰撞核快速谱方法 \citep{gamba2017fast}；
% 以约束修正恢复离散守恒的先例 \citep{gamba2009spectral}。
% 过渡：这些工具为量子碰撞计算提供基础，但量子项还需专门处理。
Fast spectral methods for the classical Boltzmann collision operator provide the foundation for their quantum extensions.
\citet{pareschi2000numerical} developed a Fourier--Galerkin discretization of the classical Boltzmann collision operator with spectral accuracy for smooth distributions; its direct evaluation costs $O(N_v^6)$ in three dimensions.
For kernels admitting a decoupling in the Carleman representation, including hard spheres, \citet{mouhot2006fast} reduced collision evaluation to convolutions computed by the fast Fourier transform (FFT), with cost $O(MN_v^3\log N_v)$.
\citet{gamba2017fast} treated more general collision kernels through a quadrature-based decomposition with cost $O(MN_v^4\log N_v)$.
Spectral accuracy does not ensure exact discrete conservation; \citet{gamba2009spectral} enforced mass, momentum and energy conservation for elastic collisions through a constrained least-squares correction.
Extending this framework to the QBE requires further decompositions for efficient evaluation of the cubic terms introduced by the occupation factors.

% Para 4: Full quantum collision solvers and the remaining cost.
% 目的：建立完整 QBE 确定性求解的文献脉络，并准确界定计算瓶颈。
% 内容：概述量子 Fourier 离散、三次碰撞项的快速分解、一般截面及混合气体扩展；
% 补充近年并行谱算法与长期弛豫研究，随后说明本文 FullQBE 的参考框架。
% 引文落点：量子谱离散与刚性碰撞处理 \citep{filbet2010numerical}；
% 量子碰撞快速分解 \citep{hu2012fast}；
% 量子混合气体、一般微分截面和输运计算 \citep{wu2019fast}；
% 并行 Fourier 谱方法、速度重标度及凝聚/饱和的数值研究
% \citep{mouton2023deterministic}。
% 复杂度：对于一般碰撞核，Wu 的量子算法与相应经典快速谱算法同阶；
% 量子三次项增加常数开销，不能据多项式次数推断渐近复杂度升阶。
% 若列公式，统一采用本文记号 O(M N_v^4 log N_v)，M 为总角向求积点数；
% Wu 的 M^{d_v-1} 对应角向规模，须先换算记号再比较。
% 过渡：减少单次完整碰撞计算的成本之外，还可改变碰撞近似或减少调用次数。
For the quantum collision operator, \citet{filbet2010numerical} developed a Fourier spectral method.
In this scheme, three cubic terms admit FFT-based evaluation at $O(MN_v^3\log N_v)$ cost in three dimensions, whereas the remaining term requires $O(MN_v^6\log N_v)$ operations.
For kernels satisfying the Carleman decoupling assumption, \citet{hu2012fast} reduced the total cost to $O(MN_v^4\log N_v)$ through an exponential decomposition.
\citet{wu2019fast} retained this complexity for general differential cross-sections and quantum gas mixtures with unequal masses, using radial quadrature to separate the Fourier variables.
\citet{mouton2023deterministic} combined a parallel Fourier spectral implementation with velocity rescaling to investigate homogeneous relaxation and numerically observe Bose--Einstein condensation and Fermi--Dirac saturation.
In Wu's general-kernel formulation, the quantum and classical operators have the same asymptotic evaluation cost, but the cubic terms require additional convolutions.
The resulting computational burden motivates less expensive collision models.

% Para 5: Relaxation models and time-integration strategies.
% 目的：解释便宜的碰撞模型已有成熟进展，以及本文为何仍需更细致的碰撞响应。
% 内容：简述 BGK 的单松弛机制、ES-BGK 的输运改进和量子推广；
% 再区分改变碰撞模型、替换占据因子、以及仅为处理刚性而作惩罚分裂。
% 引文落点：BGK 起点 \citep{bhatnagar1954model}；
% 经典输运系数与熵相容改进 \citep{andries2000gaussian}；
% 正常相量子 ES-BGK \citep{wu2012kinetic}；
% 用平衡分布替换占据因子的 U--U model equation \citep{yano2017fast}；
% 完整 QBE 的经典 BGK 惩罚策略（处理碰撞刚性）\citep{filbet2010numerical}；
% 量子 BGK 的 AP IMEX--Hermite 方法 \citep{li2025highly}。
% 区分：AP 时间推进不等于降低单次碰撞求积成本；量子 BGK 求解器不等于 FullQBE。
% 不以“BGK 不守恒/没有 H 定理”制造差异，也不宣称本文精确恢复全部输运系数。
% 过渡：围绕平衡态线性化提供了另一种保留碰撞响应信息的途径。

A common approach is to replace the collision integral with a relaxation term.
The Bhatnagar--Gross--Krook (BGK) model uses a local Maxwellian target~\citep{bhatnagar1954model}.
Its ellipsoidal-statistical extension (ES-BGK) uses an anisotropic Gaussian to adjust the Prandtl number while preserving entropy dissipation~\citep{andries2000gaussian}, and \citet{wu2012kinetic} extended this model to quantum gases in the normal phase.
Within DSMC, \citet{yano2017fast} reduced sampling requirements by using local quantum equilibria in the occupation factors.
To treat stiffness, \citet{filbet2010numerical} used a classical BGK penalty for the full QBE, while \citet{li2025highly} developed an asymptotic-preserving (AP) implicit--explicit Hermite spectral method for the quantum BGK equation.
However, the single relaxation rate of the quantum BGK model limits its ability to capture the QBE's nonequilibrium collision response.

% Para 6: Fast classical linearization and steady-state iteration.
% 目的：引出最直接的方法来源 Yin--Cai--Wang，并明确已有工作与本文的关系。
% 内容：先提经典线性化方程的快速谱求解，再说明 Yin 等如何利用局部 Maxwellian
% 和径向 VHS 核的结构消去角向求积因子，使代价达到 O(N_v^4 log N_v)；
% 同时指出便宜的线性算子可服务于非线性稳态迭代。
% 引文落点：经典线性化谱求解/微流应用 \citep{wu2014solving}；
% 专门的快速线性化算法及 modified Newton 先例 \citep{yin2026fast}；
% 稳态加速的另一条路线 GSIS \citep{su2020steady}，仅简短定位，
% 不将其宏观--动理学耦合机制归为近似 Jacobian，也不暗示本文使用了 GSIS。
% 过渡：需要判断这种经典快速线性化结构能否用于量子平衡态。
An alternative is to retain the first-order collision response through the Fr\'echet derivative at local equilibrium.
Using fast spectral discretization, \citet{wu2014solving} solved the classical linearized Boltzmann equation for Poiseuille and thermal creep flows.
For steady linearized flows, \citet{su2020steady} accelerated convergence with a general synthetic iterative scheme (GSIS) coupling kinetic and macroscopic equations.
More recently, \citet{yin2026fast} exploited the Maxwellian structure and radial variable hard-sphere (VHS) kernels to evaluate the linearized collision operator without angular quadrature, at $O(N_v^4\log N_v)$ cost in three dimensions.
They also used this operator as an approximate collision Jacobian in a modified Newton solver for nonlinear steady flows.
This motivates a fast, structure-preserving linearized approximation for the quantum collision operator.

% Para 7: Quantum obstruction and the proposed effective operator.
% 目的：提出本文要解决的具体技术问题，而不作未经全面证明的“首次”声明。
% 内容：结合 Section 3 的推导，说明变换后的量子平衡具有 Maxwellian 形式，
% 但精确线性化中的量子因子仍耦合四个碰撞速度，因此不能直接套用
% Yin 方法所需的特定径向/卷积分解；提出构造保结构、低成本有效算子的目标。
% 引文落点：\citep{yin2026fast,wu2019fast} 分别定位经典快速结构与量子计算基础；
% 四速度耦合的具体判断由本文 Section 3 支持，不转嫁给这两篇参考文献。
% 不把该障碍表述为“量子权重破坏旋转不变性”，也不把本文算子等同于精确 Jacobian。
% 目的：集中陈述本文的算子构造、理论性质与作为碰撞模型的验证。
% 内容：概述 equilibrium-weighted radial projection 如何恢复可计算的径向权重，
% 并说明质量、动量、能量守恒、linearized H-theorem 及平衡切空间的保持；
% 接着给出 O(N_v^4 log N_v) 实现及均匀弛豫中的精度--成本比较。
% 引文落点：快速谱实现的来源 \citep{yin2026fast}；
% 新投影、结构证明与实验结论引用本文 Sections 3 和 5，无需借外部引文支撑原创结论。
% 范围：这是有效近似而非精确量子线性化；说明小 hbar 下更好的算子一致性
% 是本文测试结果，不写成全参数误差定理。连续结构与离散矩修正分开表述；
% linearized H-theorem 不升级为完整非线性 H 定理或输运系数精确性。

Efficient Hermite expansions of quantum equilibria~\citep{li2025highly} do not by themselves yield a fast evaluation of the linearized collision operator.
The exact quantum linearization contains occupation factors coupling all four collision velocities, so it does not directly inherit the convolution structure used by \citet{yin2026fast}.
We first change variables to map the quantum equilibrium to a Maxwellian.
Inspired by weighted model reduction for thin-gap flows~\citep{ding2026thin}, we then approximate the remaining quantum factor by an equilibrium-weighted radial projection.
The resulting effective operator can be evaluated at $O(N_v^4\log N_v)$ cost.
Although the projection introduces modeling error, the continuous operator conserves mass, momentum and energy and satisfies a linearized H-theorem.

% Para 8: Use in a full-residual solver and numerical evidence.
% 目的：明确本文第二种用途，区分模型替代与求解完整稳态 QBE。
% 内容：介绍局部量子平衡处的有效算子作为 modified Newton 的碰撞 Jacobian 近似，
% 完整 Q_q 仍用于非线性残量；因此残量为零时求解的仍是原离散稳态方程。
% 简述 Bose/Fermi Couette 与 Fourier 流验证，以及相较 BGK Jacobian
% 减少完整碰撞调用的结果；分别报告单次有效算子速度与整套求解器收益。
% 引文落点：modified Newton 思路的直接先例 \citep{yin2026fast}；
% Couette/Fourier 作为非平衡输运基准的经典背景
% \citep{montanero2000monte,gallis2006normal}。
% 后两篇不用于支持量子统计效应；本文的量子流动结果来自 Section 5。
% 不声称全局/二次收敛保证，也不把便宜 Jacobian 表述为消除了 full residual 的成本。
Our operator tests show close directional agreement between the effective and exact linearized collision responses.
Following \citet{yin2026fast}, we use the effective operator at local quantum equilibrium as an approximate collision Jacobian in a modified Newton solver.
The nonlinear residual retains the full collision operator, so its zeros solve the original discrete steady QBE.
We consider Bose and Fermi gases in Couette and Fourier flows, standard benchmarks in classical kinetic theory~\citep{montanero2000monte,gallis2006normal}.
In these flow tests, the effective Jacobian reduces outer iterations and full collision evaluations compared with the same solver using a quantum BGK Jacobian.

% Para 9: Organization of the paper.
% 内容：Section 2 给出 QBE、量子平衡与完整碰撞谱离散；
% Section 3 构造并分析有效线性算子；Section 4 介绍 modified Newton 稳态求解；
% Section 5 验证算子、均匀弛豫及 Couette/Fourier 流；Section 6 总结与展望。
% 引文：无需新增外部引用，使用正文现有 section labels 或明确节号。
The rest of the paper is organized as follows. Section~\ref{sec:preliminaries} reviews the QBE, quantum equilibrium and fast spectral collision evaluation.
Section~\ref{sec:linear-approx} constructs and analyzes our linear approximated operator.
Section~\ref{sec:newton} develops the modified Newton solver for the steady QBE.
Section~\ref{sec:numerical} presents numerical tests of operator accuracy, homogeneous relaxation, and steady Couette and Fourier flows.
Section~\ref{sec:conclusion} summarizes the results and outlines future work.

\section{Preliminaries}\label{sec:preliminaries}
In this section, we recall the quantum Boltzmann equation and its basic properties, together with the equilibrium distributions for Bose and Fermi gases. Then, we outline the fast spectral method (FSM) for evaluating the quantum collision operator.
\subsection{The Quantum Boltzmann Equation}
Let $f(t,\bm{x},\bm{v})\ge0$ denote the phase space distribution at time $t$, position $\bm{x}$, and velocity $\bm{v}\in\mathbb{R}^3$. The quantum Boltzmann equation reads
\begin{equation}
    \label{eq:QBE}
    \pdfFrac{f}{t}+\bm{v}\cdot\nabla_{\bm{x}} f=\frac{1}{\Kn}\mathcal{Q}_{q}(f),
\end{equation}
where $\Kn$ is the Knudsen number and
\begin{equation}
\label{eq:QBE_Collision}
\begin{aligned}
\mathcal{Q}_q(f)(\bm{v})
&=
\int_{\mathbb{R}^{3}} \int_{\mathbb{S}^{2}}
B(\bm{v}-\bm{v}_*,\bm{\sigma})
\Big[
f'f_*'(1+\tau \theta_0 f)(1+\tau \theta_0 f_*) 
-
ff_*(1+\tau\theta_0 f')(1+\tau \theta_0 f_*')
\Big]
\,\dif\bm{\sigma}\,\dif\bm{v}_* .
\end{aligned}
\end{equation}
The nonnegative collision kernel is written as $B(\bm{v}-\bm{v}_*,\bm{\sigma})=B(|\bm{v}-\bm{v}_*|,\cos\vartheta)$, where $\bm{\sigma}\in\mathbb{S}^2$ and $\cos\vartheta\coloneqq \frac{(\bm{v}-\bm{v}_{*})\cdot\bm{\sigma}}{|\bm{v}-\bm{v}_{*}|}$. Here $\theta_0=\hbar^3$, with $\hbar$ the rescaled Planck constant, and $\tau=1$ for Bose gases and $\tau=-1$ for Fermi gases. In the Fermi case, $f\le\theta_0^{-1}$. The shorthand $f,f_{*},f',f'_{*}$ denotes $f(t,\bm{x},\bm{v})$, $f(t,\bm{x},\bm{v}_*)$, $f(t,\bm{x},\bm{v}')$, and $f(t,\bm{x},\bm{v}'_{*})$, respectively, with post-collision velocities
\begin{equation}
    \label{eq:modified_vel}
    \bm{v}'\coloneqq \frac{\bm{v}+\bm{v}_{*}}{2}+\frac{|\bm{v}-\bm{v}_*|}{2}\bm{\sigma},\quad\bm{v}_{*}'\coloneqq \frac{\bm{v}+\bm{v}_{*}}{2}-\frac{|\bm{v}-\bm{v}_{*}|}{2}\bm{\sigma}.
\end{equation}

For sufficiently regular admissible $f$, $\mathcal{Q}_q$ conserves mass, momentum, and energy \citep{filbet2010numerical}:
\begin{equation}
    \label{eq:conservation_Qq}
    \int_{\mathbb{R}^3}\mathcal{Q}_q(f)(\bm{v})\dif\bm{v}=0,\quad\int_{\mathbb{R}^3}\bm{v}\mathcal{Q}_q(f)(\bm{v})\dif\bm{v}=\bm{0},\quad\int_{\mathbb{R}^3}|\bm{v}|^2\mathcal{Q}_q(f)(\bm{v})\dif\bm{v}=0,
\end{equation}
and satisfies Boltzmann's H-theorem:
\begin{equation}
    \label{eq:H_Thm}
    \int_{\mathbb{R}^3}\log\left(\frac{f}{1+\tau\theta_0f}\right)\mathcal{Q}_q(f)\dif\bm{v}\le 0.
\end{equation}
The density $\rho$, bulk velocity $\bm{u}$, kinetic temperature $\theta$,
pressure tensor $\mathbb{P}$, and heat flux $\bm{q}$ are defined by
\begin{equation}
    \label{eq:momentums}
    \begin{aligned}
        &\rho=\int_{\mathbb{R}^3}f\dif\bm{v},\quad
        \rho\bm{u}=\int_{\mathbb{R}^3}\bm{v}f\dif\bm{v},\quad
        \rho\theta=\frac{1}{3}\int_{\mathbb{R}^3}
        |\bm{v}-\bm{u}|^2f\dif\bm{v},\\
        &\mathbb{P}=\int_{\mathbb{R}^3}
        (\bm{v}-\bm{u})\otimes(\bm{v}-\bm{u})f\dif\bm{v},\quad
        \bm{q}=\frac{1}{2}\int_{\mathbb{R}^3}
        (\bm{v}-\bm{u})|\bm{v}-\bm{u}|^2f\dif\bm{v}.
    \end{aligned}
\end{equation}
Here $\theta$ measures the energy of random particle motion relative
to $\bm{u}$, while $\mathbb{P}$ and $\bm{q}$ describe the associated
momentum and energy transport. The scalar pressure is
$p=\rho\theta=\frac{1}{3}\operatorname{tr}\mathbb{P}$.
\subsection{Quantum Equilibrium}
The local quantum equilibrium $\mathcal{M}_q$ satisfies $\mathcal{Q}_q(\mathcal{M}_q)(\bm{v})\equiv0$. It has the form \citep{filbet2010numerical}
\begin{equation}
    \label{eq:local_equilibrium_QBE}
    \mathcal{M}_{q}(t,\bm{x},\bm{v})\coloneqq
    \frac{1}{\theta_0}\frac{z}{e^{\frac{|\bm{v}-\bm{u}|^2}{2T}}-\tau z},
\end{equation}
where $z$ is the fugacity and $T>0$ the thermodynamic temperature, which generally differs from the kinetic temperature $\theta=\frac{p}{\rho}$. The admissible fugacity range is $0<z\le1$ for Bose gases and $0<z<\infty$ for Fermi gases.
The Bose endpoint $z=1$ marks the onset of Bose--Einstein condensation \citep{filbet2010numerical}.
Throughout this work, we restrict the Bose case to $0<z<1$.
The parameters $(z,T)$ are determined by matching the moments $(\rho,\theta)$. For real $s>1$ and $y\le1$, define the polylogarithm function by
\begin{equation}
    \label{eq:polylogarithm}
    \operatorname{Li}_s(y)
    =\frac{y}{\Gamma(s)}
    \int_0^\infty\frac{r^{s-1}}{e^r-y}\,\dif r,
\end{equation}
where $\Gamma$ is the gamma function. Moment matching gives the $2\times2$ nonlinear system for $(z,T)$ \citep{li2025highly}:
\begin{equation}
    \label{eq:fugality_and_temperature}
    \frac{\bigl[\tau\operatorname{Li}_{\frac{3}{2}}(\tau z)\bigr]^{\frac{5}{2}}}
         {\bigl[\tau\operatorname{Li}_{\frac{5}{2}}(\tau z)\bigr]^{\frac{3}{2}}}
    =\frac{\theta_0\rho}{(2\pi\theta)^{\frac{3}{2}}},
    \qquad
    T=\theta\,
    \frac{\operatorname{Li}_{\frac{3}{2}}(\tau z)}
         {\operatorname{Li}_{\frac{5}{2}}(\tau z)}.
\end{equation}
An efficient algorithm for evaluating these polylogarithms is given in \citet{li2025highly}.

Define
\begin{equation}
    \label{eq:Quantum_equil_to_Classical_Maxwellian}
    \mathcal{N}(\bm{v};z,T,\bm{u})\coloneqq
    \frac{\mathcal{M}_q}{1+\tau\theta_0\mathcal{M}_q}
    =\frac{z}{\theta_0}e^{-\frac{|\bm{v}-\bm{u}|^2}{2T}},
\end{equation}
and the shorthand $\mathcal{N},\mathcal{N}',\mathcal{N}_*,\mathcal{N}_*'$ denotes $\mathcal{N}(\bm{v}),\mathcal{N}(\bm{v}'),\mathcal{N}(\bm{v}_*),\mathcal{N}(\bm{v}_*')$, respectively.  
Conservation of momentum and energy gives the factorization
\begin{equation}
    \label{eq:Maxwellian_Separate}
        \mathcal{N}\mathcal{N}_*
        =\mathcal{N}'\mathcal{N}_*'
        =\mathcal{N}^{g}(\bm{v}-\bm{v}_*)
         \mathcal{N}^{c}\!\left(\frac{\bm{v}+\bm{v}_*}{2}\right),\quad
        \mathcal{N}^{g}(\bm{g})
        =e^{-\frac{|\bm{g}|^2}{4T}},\quad
        \mathcal{N}^{c}(\bm{c})
        =\left(\frac{z}{\theta_0}\right)^2
         e^{-\frac{|\bm{c}-\bm{u}|^2}{T}}.
\end{equation}
This factorization is used in the effective linear approximation below. 

\begin{remark}
The variables $\bm{g}=\bm{v}-\bm{v}_*$ and
$\bm{c}=\frac{\bm{v}+\bm{v}_*}{2}$ are the relative and center-of-mass
velocities of the colliding pair.
In the transformed product $\mathcal{N}\mathcal{N}_*$,
$\mathcal{N}^{g}$ weights relative motion, while $\mathcal{N}^{c}$
weights center-of-mass motion relative to $\bm{u}$.
Elastic collisions preserve $|\bm{g}|$ and $\bm{c}$, so 
$\mathcal{N}^{g}$ and $\mathcal{N}^{c}$ are unchanged after collision.
% is unchanged.
\end{remark}
\subsection{Fast Spectral Evaluation}
\label{sec:FSM_Direct}
Following \citet{filbet2010numerical}, we write the quantum collision operator \eqref{eq:QBE_Collision} as
\begin{equation}
    \label{eq:split_Quantum_collision}
    \mathcal{Q}_{q}=\mathcal{Q}_{c}+\tau\theta_0(\mathcal{Q}_1+\mathcal{Q}_2-\mathcal{Q}_3-\mathcal{Q}_4),
\end{equation}
where the classical Boltzmann collision operator is
\begin{equation}
    \label{eq:classical_Boltzmann}
    \mathcal{Q}_c(f)(\bm{v})=\int_{\mathbb{R}^3}\int_{\mathbb{S}^2}B(|\bm{v}-\bm{v}_*|,\cos\vartheta)(f'f_{*}'-ff_*)\dif\bm{\sigma}\dif\bm{v}_{*},
\end{equation}
and the cubic terms are
\begin{equation}
\label{eq:origin_form_Q}
\left\{
\begin{aligned}
\mathcal{Q}_1(f)(\bm{v})
&=
\int_{\mathbb{R}^{3}}
\int_{\mathbb{S}^{2}}
B(|\bm{v}-\bm{v}_*|,\cos\vartheta)
f'f_*'f_*
\,\mathrm{d}\bm{\sigma}\,\mathrm{d}\bm{v}_*,\quad
&\mathcal{Q}_2(f)(\bm{v})
=\int_{\mathbb{R}^{3}}
\int_{\mathbb{S}^{2}}
B(|\bm{v}-\bm{v}_*|,\cos\vartheta)
f'f_*'f
\,\mathrm{d}\bm{\sigma}\,\mathrm{d}\bm{v}_*,\\
\mathcal{Q}_3(f)(\bm{v})
&=
\int_{\mathbb{R}^{3}}
\int_{\mathbb{S}^{2}}
B(|\bm{v}-\bm{v}_*|,\cos\vartheta)
ff_*f'
\,\mathrm{d}\bm{\sigma}\,\mathrm{d}\bm{v}_*,\quad
&\mathcal{Q}_4(f)(\bm{v})=
\int_{\mathbb{R}^{3}}
\int_{\mathbb{S}^{2}}
B(|\bm{v}-\bm{v}_*|,\cos\vartheta)
ff_*f_*'
\,\mathrm{d}\bm{\sigma}\,\mathrm{d}\bm{v}_* .
\end{aligned}
\right.
\end{equation}
At fixed time and position, let $\mathcal{B}_r=\{\bm{w}\in\mathbb{R}^3:|\bm{w}|\le r\}$ and choose $S>0$ so that $f$ is negligible outside $\mathcal{B}_S$.
We restrict $f$ to the velocity domain $\mathcal{D}_L=[-L,L]^3$ and extend it with period $2L$ in each direction.
The uniform velocity grid has $N_v$ points per direction and spacing $\Delta v=2L/N_v$.

We denote $R$ as the cutoff in relative speed $|\bm{v}-\bm{v}_*|$, rather than the support radius of $f$.
Under this effective compact-support assumption, we choose $R$ and $L$
to control aliasing due to periodization \citep{yin2026fast}:
\begin{equation}
    \label{eq:velocity_truncation}
    R=2S,\qquad
    L\ge\frac{3+\sqrt{2}}{2}S
      =\frac{3+\sqrt{2}}{4}R.
\end{equation}

In the Carleman representation \citep{mouhot2006fast}, $\bm{y}=\bm{v}'-\bm{v}$ and $\bm{z}=\bm{v}_*'-\bm{v}$ are velocity increments.
They satisfy $\bm{y}\cdot\bm{z}=0$ and $|\bm{v}-\bm{v}_*|^2=|\bm{y}|^2+|\bm{z}|^2$.
The Carleman kernel before truncation reads
\begin{equation}
\label{eq:tildeB}
\widetilde{B}(\bm{y},\bm{z})
=
\frac{4}
{\left(|\bm{y}|^2+|\bm{z}|^2\right)^{\frac{1}{2}}}
B\left(
\sqrt{|\bm{y}|^2+|\bm{z}|^2},
1-\frac{2|\bm{y}|^2}{|\bm{y}|^2+|\bm{z}|^2}
\right)
=
\widetilde{B}(|\bm{y}|,|\bm{z}|).
\end{equation}
We impose the relative-speed cutoff by
\begin{equation}
    \label{eq:Carleman_cutoff}
    \widetilde{B}_R(\bm{y},\bm{z})
    =\widetilde{B}(\bm{y},\bm{z})
     \mathbf{1}_{\{|\bm{y}|^2+|\bm{z}|^2\le R^2\}},
\end{equation}
where $\mathbf{1}$ denotes the indicator function.
The joint cutoff makes the Carleman formulation of \citet{wu2019fast}
consistent with the relative-speed truncation used by \citet{yin2026fast}.
The resulting truncated, periodized cubic operators are
\begin{equation}
\label{eq:Carleman_rep}
\left\{
\begin{aligned}
\mathcal{Q}_1^R(f)(\bm{v})
&=
\int_{\mathcal{B}_R}
\int_{\mathcal{B}_R}
\widetilde{B}_R(\bm{y},\bm{z})
\delta(\bm{y}\cdot\bm{z})
f(\bm{v}+\bm{y})
f(\bm{v}+\bm{z})
f(\bm{v}+\bm{y}+\bm{z})
\,\mathrm{d}\bm{y}\,\mathrm{d}\bm{z},
\\[0.8em]
\mathcal{Q}_2^R(f)(\bm{v})
&=
\int_{\mathcal{B}_R}
\int_{\mathcal{B}_R}
\widetilde{B}_R(\bm{y},\bm{z})
\delta(\bm{y}\cdot\bm{z})
f(\bm{v}+\bm{y})
f(\bm{v}+\bm{z})
f(\bm{v})
\,\mathrm{d}\bm{y}\,\mathrm{d}\bm{z},
\\[0.8em]
\mathcal{Q}_3^R(f)(\bm{v})
&=
\int_{\mathcal{B}_R}
\int_{\mathcal{B}_R}
\widetilde{B}_R(\bm{y},\bm{z})
\delta(\bm{y}\cdot\bm{z})
f(\bm{v}+\bm{y})
f(\bm{v}+\bm{y}+\bm{z})
f(\bm{v})
\,\mathrm{d}\bm{y}\,\mathrm{d}\bm{z},
\\[0.8em]
\mathcal{Q}_4^R(f)(\bm{v})
&=
\int_{\mathcal{B}_R}
\int_{\mathcal{B}_R}
\widetilde{B}_R(\bm{y},\bm{z})
\delta(\bm{y}\cdot\bm{z})
f(\bm{v}+\bm{z})
f(\bm{v}+\bm{y}+\bm{z})
f(\bm{v})
\,\mathrm{d}\bm{y}\,\mathrm{d}\bm{z},
\end{aligned}
\right.
\end{equation}
Let $N_v$ be even and set $N=N_v/2$. We approximate $f$ by
\begin{equation}
\label{eq:Fourier_expansion}
    f_N(\boldsymbol{v})
=
\sum_{\boldsymbol{k}=-N}^{N-1}
\hat{f}_{\boldsymbol{k}}
e^{i\pi \boldsymbol{k}\cdot\boldsymbol{v}/L},
\qquad
\hat{f}_{\boldsymbol{k}}
=
\left(\frac{1}{2N}\right)^3
\sum_{\boldsymbol{\ell}=-N}^{N-1}
f\left(\frac{\boldsymbol{\ell}L}{N}\right)
e^{-i\pi \boldsymbol{k}\cdot\boldsymbol{\ell}/N},
\end{equation}
where $\boldsymbol{k}=(k_1,k_2,k_3)$ and
$\sum_{\boldsymbol{k}=-N}^{N-1}
=\sum_{k_1=-N}^{N-1}\sum_{k_2=-N}^{N-1}\sum_{k_3=-N}^{N-1}.$
Each Fourier index ranges from $-N$ to $N-1$, so the Nyquist frequency is represented only once.
For $i=1,\ldots,4$, the corresponding Fourier expansion of $\mathcal{Q}_i^R(f_N)$ is
\begin{equation}
\mathcal{Q}_{i,N}^R(f_N)(\bm{v})
=\sum_{\bm{k}=-N}^{N-1}
  \widehat{\mathcal{Q}}_{i,\bm{k}}e^{i\pi\bm{k}\cdot\bm{v}/L},
\qquad
\widehat{\mathcal{Q}}_{i,\bm{k}}
=\frac{1}{(2L)^3}\int_{\mathcal{D}_L}\mathcal{Q}_i^R(f_N)(\bm{v})
  e^{-i\pi\bm{k}\cdot\bm{v}/L}\,\dif\bm{v}.
\end{equation}
Substituting \eqref{eq:Fourier_expansion} gives
\begin{equation}
\label{eq:Fourier_modes_QBE}
\left\{
\begin{alignedat}{2}
\widehat{\mathcal{Q}}_{1,\bm{k}}
&=
\sum_{\substack{
\bm{l},\bm{m},\bm{n}=-N \\
\bm{l}+\bm{m}+\bm{n}=\bm{k}
}}^{N-1}
\beta(\bm{l}+\bm{n},\bm{m}+\bm{n})
\widehat{f}_{\bm{l}}\widehat{f}_{\bm{m}}\widehat{f}_{\bm{n}},
\qquad
&
\widehat{\mathcal{Q}}_{2,\bm{k}}
&=
\sum_{\substack{
\bm{l},\bm{m},\bm{n}=-N \\
\bm{l}+\bm{m}+\bm{n}=\bm{k}
}}^{N-1}
\beta(\bm{l},\bm{m})
\widehat{f}_{\bm{l}}\widehat{f}_{\bm{m}}\widehat{f}_{\bm{n}},
\\[1.2em]
\widehat{\mathcal{Q}}_{3,\bm{k}}
&=
\sum_{\substack{
\bm{l},\bm{m},\bm{n}=-N \\
\bm{l}+\bm{m}+\bm{n}=\bm{k}
}}^{N-1}
\beta(\bm{l}+\bm{m},\bm{m})
\widehat{f}_{\bm{l}}\widehat{f}_{\bm{m}}\widehat{f}_{\bm{n}},
\qquad
&
\widehat{\mathcal{Q}}_{4,\bm{k}}
&=
\sum_{\substack{
\bm{l},\bm{m},\bm{n}=-N \\
\bm{l}+\bm{m}+\bm{n}=\bm{k}
}}^{N-1}
\beta(\bm{m},\bm{l}+\bm{m})
\widehat{f}_{\bm{l}}\widehat{f}_{\bm{m}}\widehat{f}_{\bm{n}} ,
\end{alignedat}
\right.
\end{equation}
where the kernel modes are
\begin{equation}
\label{eq:beta}
\beta(\bm{l},\bm{m})
=
\int_{\mathcal{B}_R}
\int_{\mathcal{B}_R}
\widetilde{B}_{R}(|\bm{y}|,|\bm{z}|)
\delta(\bm{y}\cdot\bm{z})
e^{i\frac{\pi}{L}\bm{l}\cdot\bm{y}}
e^{i\frac{\pi}{L}\bm{m}\cdot\bm{z}}
\,\mathrm{d}\bm{y}\,\mathrm{d}\bm{z}.
\end{equation}

Following \citet{wu2019fast}, we use quadrature to approximate $\beta$ in separated form and evaluate the resulting convolutions by FFTs.
For radial quadrature $(r_j,a_j)_{j=1}^{N_r}$ on $[0,R]$ and spherical quadrature $(\bm{\omega}_p,w_p)_{p=1}^{M}$ with surface measure on $\mathbb{S}^2$, setting $\bm{y}=r\bm{\omega}$ gives
\[
\begin{aligned}
\beta(\bm{l},\bm{m})
\approx\sum_{j=1}^{N_r}\sum_{p=1}^{M}
a_jw_pr_j e^{i\pi r_j\bm{l}\cdot\bm{\omega}_p/L}\,
\Phi_{jp}(\bm{m}),\quad
\Phi_{jp}(\bm{m})
=\int_{\mathcal{B}_R}
\widetilde{B}_R(r_j,|\bm{z}|)
\delta(\bm{\omega}_p\cdot\bm{z})
e^{i\pi\bm{m}\cdot\bm{z}/L}\,\dif\bm{z}.
\end{aligned}
\]
Taking $N_r=O(N_v)$ yields $O(MN_v)$ separated terms.
With $M$ Lebedev quadrature nodes on $\mathbb{S}^2$ \citep{LebedevLaikov1999} and $O(N_v)$ radial nodes, each collision evaluation requires $O(MN_v^4\log N_v)$ operations after kernel precomputation.

\begin{remark}
In this framework, the quantum and classical collision operators have the same asymptotic complexity, but the additional cubic terms typically increase the constant factor for the quantum operator.
Reducing this overhead is therefore particularly valuable for QBE computations.
\end{remark}

\section{A Structure-Preserving Effective Linear Approximation}\label{sec:linear-approx}
In this section, we construct an effective linear approximation to $\mathcal{Q}_q$ about $\mathcal{M}_q$ with three objectives:
\begin{itemize}
    \item time complexity: $O(N_v^4\log N_v)$;
    \item preservation of mass, momentum, energy and the equilibrium;
    \item accurate approximation of the Fr\'echet derivative of $\mathcal{Q}_q$ at $\mathcal{M}_q$ for small $\hbar$, and use as a Jacobian surrogate in modified Newton iterations for larger $\hbar$.
\end{itemize}
We consider the variable hard-sphere (VHS) collision kernel, which is independent of the scattering direction $\bm{\sigma}$:
\begin{equation}
    \label{eq:B_VHS}
    B(\bm{v}-\bm{v}_*,\bm{\sigma})=C_B|\bm{v}-\bm{v}_*|^{2(1-w)}.
\end{equation}
Here $C_B>0$ and $w$ are fixed kernel parameters. We first derive the exact linearization of $\mathcal{Q}_q$ and explain why its quantum factor obstructs direct application of the fast spectral method of \citet{yin2026fast}. We then apply an equilibrium-weighted radial projection to recover the convolution structure required by that method. Finally, we establish the structural properties of the resulting effective operator and describe its fast spectral discretization.

\subsection{Exact Linearization}
To expose the Maxwellian product structure in \eqref{eq:Maxwellian_Separate}, we introduce the change of variables
\begin{equation}
    \label{eq:modified_func}
    \mathcal{F}\coloneqq\frac{f}{1+\tau\theta_0 f},
    \qquad
    f=\frac{\mathcal{F}}{1-\tau\theta_0\mathcal{F}},
\end{equation}
with $f<\theta_0^{-1}$ in the Fermi case.
Retaining the notation $\mathcal{Q}_q$ for the collision operator expressed in $\mathcal{F}$, we obtain
\begin{equation}
    \label{eq:Quantum_collision_by_R}
    \mathcal{Q}_q(\mathcal{F})(\bm{v})
    =\int_{\mathbb{R}^3}\int_{\mathbb{S}^2}
    B(|\bm{v}-\bm{v}_*|)\mathcal{A}(\mathcal{F})
    \bigl(\mathcal{F}'\mathcal{F}_*'-\mathcal{F}\mathcal{F}_*\bigr)
    \dif\bm{\sigma}\dif\bm{v}_*,
\end{equation}
where the auxiliary factor is
\begin{equation}
    \label{eq:auxiliary_factor}
    \mathcal{A}(\mathcal{F})
    \coloneqq
    \frac{1}{
    (1-\tau\theta_0\mathcal{F})(1-\tau\theta_0\mathcal{F}_*)
    (1-\tau\theta_0\mathcal{F}')(1-\tau\theta_0\mathcal{F}_*')}.
\end{equation}
The equilibrium $\mathcal{M}_q$ is mapped to the Maxwellian $\mathcal{N}$ defined in \eqref{eq:Quantum_equil_to_Classical_Maxwellian}.

Fix the equilibrium $\mathcal{N}$ and consider the perturbation $\mathcal{N}+\varepsilon\eta$.
Assume that the collision integrals converge absolutely and that differentiation under the integral sign is justified.
The exact Fr\'echet derivative with respect to $\mathcal{F}$ at $\mathcal{N}$ is
\begin{equation}
    \label{eq:exact_Frechet_R}
    \begin{aligned}
    \mathrm{D}_{\mathcal{F}}\mathcal{Q}_q[\mathcal{N}](\eta)(\bm{v})
    &=\left.\frac{\dif}{\dif\varepsilon}
    \mathcal{Q}_q(\mathcal{N}+\varepsilon\eta)(\bm{v})
    \right|_{\varepsilon=0}\\
    &=\int_{\mathbb{R}^3}\int_{\mathbb{S}^2}
    B(|\bm{v}-\bm{v}_*|)\mathcal{A}(\mathcal{N})
    \bigl(\mathcal{N}_*'\eta'+\mathcal{N}'\eta_*'
    -\mathcal{N}_*\eta-\mathcal{N}\eta_*\bigr)
    \dif\bm{\sigma}\dif\bm{v}_*.
    \end{aligned}
\end{equation}
Setting $r=\eta/\mathcal{N}$ and using \eqref{eq:Maxwellian_Separate}, we obtain the equivalent quotient form
\begin{equation}
    \label{eq:linearized_collision}
    \begin{aligned}
    \mathcal{L}_q(r)(\bm{v})
    \coloneqq
    \mathrm{D}_{\mathcal{F}}\mathcal{Q}_q[\mathcal{N}](\mathcal{N}r)(\bm{v})=\int_{\mathbb{R}^3}\int_{\mathbb{S}^2}
    B(|\bm{v}-\bm{v}_*|)\mathcal{A}(\mathcal{N})
    \mathcal{N}\mathcal{N}_*
    (r'+r_*'-r-r_*)
    \dif\bm{\sigma}\dif\bm{v}_*.
    \end{aligned}
\end{equation}

\begin{remark}
To approximate the linear collision response for a given $f$ near $\mathcal{M}_q$, apply
$\mathrm{D}_{\mathcal{F}}\mathcal{Q}_q[\mathcal{N}]$ to the full transformed increment
$\mathcal{F}-\mathcal{N}$.
Equivalently, one can also evaluate $\mathcal{L}_q(r)$ with
$r=(\mathcal{F}-\mathcal{N})/\mathcal{N}$.
For $f=\mathcal{M}_q+\varepsilon g$, \eqref{eq:modified_func} gives
\begin{equation}
    \label{eq:perturbation_ratio}
    \begin{aligned}
    \frac{\mathcal{F}-\mathcal{N}}{\mathcal{N}}
    =\frac{\varepsilon g}{
    \mathcal{M}_q(1+\tau\theta_0\mathcal{M}_q)
    +\varepsilon\tau\theta_0\mathcal{M}_q g}=\frac{f-\mathcal{M}_q}{
    \mathcal{M}_q(1+\tau\theta_0\mathcal{M}_q)}
    +O(\varepsilon^2).
    \end{aligned}
\end{equation}
Thus one may instead take
$r=(f-\mathcal{M}_q)/[\mathcal{M}_q(1+\tau\theta_0\mathcal{M}_q)]$.
The two choices differ by $O(\varepsilon^2)$ and give the same first-order collision response.
\end{remark}

\begin{remark}
Although $\mathcal{N}\mathcal{N}_*$ has the factorization \eqref{eq:Maxwellian_Separate}, $\mathcal{A}(\mathcal{N})$ couples all four pre- and post-collision velocities.
In general, this factor depends on the center-of-mass velocity as well as the relative velocity and scattering direction.
The resulting operator therefore does not directly inherit the convolution structure of the classical linearized collision operator, so it does not admit an $O(N_v^4\log N_v)$ algorithm.
\end{remark}
\subsection{Equilibrium-Weighted Radial Projection}
To apply the fast spectral method of \citet{yin2026fast}, we replace the four-velocity factor $\mathcal{A}(\mathcal{N})$ in \eqref{eq:linearized_collision} by a radial factor $\widetilde{\mathcal{A}}_{\mathcal{N}}(|\bm{v}-\bm{v}_*|)$.
Its product with the VHS kernel remains radial and therefore retains the kernel structure required by that method.

For a fixed equilibrium, $\mathcal{A}(\mathcal{N})$ is independent of the perturbation $r$, which enters \eqref{eq:linearized_collision} only through $r'+r_*'-r-r_*$.
We choose the radial factor independently of $r$ as well, preserving linearity and allowing the same kernel to be used for different perturbations.
We also require the replacement to preserve the total equilibrium collision weight:
\begin{equation}
    \label{eq:radial_weight_matching}
    \begin{aligned}
    &\int_{\mathbb{R}^3}\int_{\mathbb{R}^3}\int_{\mathbb{S}^2}
    B(|\bm{v}-\bm{v}_*|)\mathcal{A}(\mathcal{N})
    \mathcal{N}\mathcal{N}_*
    \dif\bm{\sigma}\dif\bm{v}_*\dif\bm{v}=
    \int_{\mathbb{R}^3}\int_{\mathbb{R}^3}\int_{\mathbb{S}^2}
    B(|\bm{v}-\bm{v}_*|)
    \widetilde{\mathcal{A}}_{\mathcal{N}}(|\bm{v}-\bm{v}_*|)
    \mathcal{N}\mathcal{N}_*
    \dif\bm{\sigma}\dif\bm{v}_*\dif\bm{v}.
    \end{aligned}
\end{equation}
This condition matches the two mean quantum factors $\mathcal{A}(\mathcal{N})$ and $\widetilde{\mathcal{A}}(\mathcal{N})$ 
under the weight $B(|\bm{v}-\bm{v}_*|)\mathcal{N}\mathcal{N}_*$.
Based on this condition, we propose the following construction of $\widetilde{A}(\mathcal{N})$, which enforces a stronger condition that requires \eqref{eq:radial_weight_matching} to hold for any admissible radial function $B$.

\begin{theorem}[Equilibrium-weighted radial projection]\label{thm:radial_projection}
For $s=|\bm{v}-\bm{v}_*|>0$, introduce the collision coordinates
\begin{equation}
\bm{c}=\frac{\bm{v}+\bm{v}_*}{2},
\qquad
\bm{\omega}=\frac{\bm{v}-\bm{v}_*}{s},
\qquad
\bm{\sigma}=\frac{\bm{v}'-\bm{v}_*'}{s},
\end{equation}
where $\bm{c}\in\mathbb{R}^3$ and $\bm{\omega},\bm{\sigma}\in\mathbb{S}^2$.
For each $s>0$, define the equilibrium-weighted average
\begin{equation}
\label{eq:radial_projection}
\widetilde{\mathcal{A}}_{\mathcal{N}}(s)
\coloneqq
\frac{
\displaystyle
\int_{\mathbb{R}^3}\int_{\mathbb{S}^2}\int_{\mathbb{S}^2}
\mathcal{N}\mathcal{N}_*
\mathcal{A}(\mathcal{N})(\bm{c},s,\bm{\omega},\bm{\sigma})
\,\dif\bm{\sigma}\,\dif\bm{\omega}\,\dif\bm{c}
}{
\displaystyle
\int_{\mathbb{R}^3}\int_{\mathbb{S}^2}\int_{\mathbb{S}^2}
\mathcal{N}\mathcal{N}_*
\,\dif\bm{\sigma}\,\dif\bm{\omega}\,\dif\bm{c}
}.
\end{equation}
At $s=0$, all four collision velocities equal $\bm{c}$, so
\begin{equation*}
\widetilde{\mathcal{A}}_{\mathcal{N}}(0)
=
\frac{\displaystyle\int_{\mathbb{R}^3}
\mathcal{N}(\bm{c})^2[1-\tau\theta_0\mathcal{N}(\bm{c})]^{-4}\,\dif\bm{c}}
{\displaystyle\int_{\mathbb{R}^3}\mathcal{N}(\bm{c})^2\,\dif\bm{c}}.
\end{equation*}
For any measurable radial weight $F(s)$ for which the integrals below converge absolutely, this average satisfies
\begin{equation}
\label{eq:radial_moment_preservation}
\begin{aligned}
&\int_{\mathbb{R}^3}\int_{\mathbb{R}^3}\int_{\mathbb{S}^2}
F(|\bm{v}-\bm{v}_*|)\mathcal{A}(\mathcal{N})
\mathcal{N}\mathcal{N}_*
\,\dif\bm{\sigma}\,\dif\bm{v}_*\,\dif\bm{v}=
\int_{\mathbb{R}^3}\int_{\mathbb{R}^3}\int_{\mathbb{S}^2}
F(|\bm{v}-\bm{v}_*|)
\widetilde{\mathcal{A}}_{\mathcal{N}}(|\bm{v}-\bm{v}_*|)
\mathcal{N}\mathcal{N}_*
\,\dif\bm{\sigma}\,\dif\bm{v}_*\,\dif\bm{v}.
\end{aligned}
\end{equation}
The radial factor is positive, with
$\widetilde{\mathcal{A}}_{\mathcal{N}}(s)\ge1$ for Bose gases
($\tau=1$, $0<z<1$) and
$0<\widetilde{\mathcal{A}}_{\mathcal{N}}(s)\le1$ for Fermi gases
($\tau=-1$, $z>0$).
In both cases, $\widetilde{\mathcal{A}}_{\mathcal{N}}(s)\to1$
as $z\to0$ and as $s/\sqrt{T}\to\infty$, with $z$ fixed in the latter limit.
\end{theorem}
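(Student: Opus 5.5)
The plan is to work entirely in the collision coordinates $(\bm{c},s,\bm{\omega},\bm{\sigma})$ and reduce the moment identity to Fubini. The first step is the change of variables $(\bm{v},\bm{v}_*)\mapsto(\bm{c},\bm{g})$ with $\bm{g}=\bm{v}-\bm{v}_*$, whose Jacobian is $1$. Writing $\bm{g}=s\bm{\omega}$ then gives
\[
\dif\bm{v}\,\dif\bm{v}_*=s^2\,\dif s\,\dif\bm{\omega}\,\dif\bm{c}.
\]
By \eqref{eq:modified_vel}, the four collision velocities are $\bm{c}\pm\frac{s}{2}\bm{\omega}$ and $\bm{c}\pm\frac{s}{2}\bm{\sigma}$. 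Hence $\mathcal{A}(\mathcal{N})$ is a well-defined function of $(\bm{c},s,\bm{\omega},\bm{\sigma})$. By \eqref{eq:Maxwellian_Separate}, $\mathcal{N}\mathcal{N}_*=\mathcal{N}^g(s)\mathcal{N}^c(\bm{c})$ depends only on $(s,\bm{c})$.

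Both sides of \eqref{eq:radial_moment_preservation} then become
\[
\int_0^\infty F(s)s^2\Bigl[\int\!\!\int\!\!\int \mathcal{N}\mathcal{N}_*\,G\,\dif\bm{\sigma}\,\dif\bm{\omega}\,\dif\bm{c}\Bigr]\dif s,
\]
with $G=\mathcal{A}(\mathcal{N})$ on the left and $G=\widetilde{\mathcal{A}}_{\mathcal{N}}(s)$ on the right. Since $\widetilde{\mathcal{A}}_{\mathcal{N}}(s)$ does not depend on the inner variables, it can be pulled out of the inner integral on the right. The definition \eqref{eq:radial_projection} says exactly that the two inner integrals coincide for each $s>0$. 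Absolute convergence justifies Tonelli/Fubini, and the set $\{s=0\}$ is null. The value at $s=0$ follows by direct substitution, since all four velocities equal $\bm{c}$. Continuity of the quotient as $s\to0$ follows from dominated convergence, because $\mathcal{N}^g(s)\to1$.

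Next come positivity and the bounds. Each factor $1-\tau\theta_0\mathcal{N}$ equals $1/(1+\tau\theta_0\mathcal{M}_q)$, by \eqref{eq:Quantum_equil_to_Classical_Maxwellian}. For Bose gases, $\theta_0\mathcal{N}\le z<1$, so each factor lies in $(0,1]$ and $\mathcal{A}\ge1$. For Fermi gases each factor is $1+\theta_0\mathcal{N}\ge1$, so $0<\mathcal{A}\le1$. The average \eqref{eq:radial_projection} is taken against the positive probability weight $\mathcal{N}\mathcal{N}_*$, so it inherits the same pointwise bounds.

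The limits are the main obstacle; the plan is to bound $|\mathcal{A}-1|$ pointwise and then average.

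For $z\to0$, use $\theta_0\mathcal{N}\le z$. This gives $\mathcal{A}\in[(1+z)^{-4},(1-z)^{-4}]$ uniformly, so $\widetilde{\mathcal{A}}_{\mathcal{N}}(s)\to1$ uniformly in $s$.

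For $s/\sqrt{T}\to\infty$ with $z$ fixed, the approach is as follows.
\begin{itemize}
\item Use energy conservation: $|\bm{v}-\bm{u}|^2+|\bm{v}_*-\bm{u}|^2=2|\bm{c}-\bm{u}|^2+s^2/2$, and the same holds for the primed pair.
\item Deduce that in each pair at least one velocity satisfies $|\cdot-\bm{u}|^2\ge s^2/4$. The corresponding factor is therefore within $O(ze^{-s^2/(8T)})$ of $1$.
\item Bound the remaining factor only by the crude constants $(1\mp z)^{-1}$. This alone does not force $\mathcal{A}\to1$, so the averaging must be used.
\item Use the shape of the weight: it is Gaussian in $\bm{c}$ with width $\sqrt{T}$. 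For $\bm{c}$ in a ball of radius $\delta s$ around $\bm{u}$, all four velocities have $|\cdot-\bm{u}|\ge s/2-\delta s$. On that set $\mathcal{A}=1+O(ze^{-c s^2/T})$.
\item The complement has weight fraction $O(e^{-\delta^2s^2/T})$, on which $\mathcal{A}$ is bounded.
\end{itemize}
Combining the two regions gives $\widetilde{\mathcal{A}}_{\mathcal{N}}(s)\to1$. I expect this splitting, and in particular checking that the $\bm{\omega},\bm{\sigma}$ directions impose no extra constraint, to be the step needing the most care.
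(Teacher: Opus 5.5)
Your proposal is correct. The moment identity, obtained by passing to $(\bm{c},s,\bm{\omega},\bm{\sigma})$ with unit Jacobian and applying Fubini, matches the paper's proof. So do the positivity and bounds, which come from pointwise bounds on $\mathcal{A}(\mathcal{N})$ inherited by a positive weighted average. The genuine difference lies in the two limits.

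The paper treats both limits at once. After the rescaling $\bm{c}=\bm{u}+\sqrt{T}\bm{y}$, the common factor $(z/\theta_0)^2e^{-s^2/(4T)}$ cancels. The normalized weight is then a fixed probability measure (Gaussian in $\bm{y}$ times normalized surface measures), independent of $z$ and $\lambda=s/\sqrt{T}$. For every fixed $(\bm{y},\bm{\omega},\bm{\sigma})$, the four occupations $z e^{-|\bm{y}\pm\lambda\bm{\omega}/2|^2/2}$ and $z e^{-|\bm{y}\pm\lambda\bm{\sigma}/2|^2/2}$ tend to zero, so $\mathcal{A}(\mathcal{N})\to1$ pointwise. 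Dominated convergence with the constant bound $(1-z_0)^{-4}$ (Bose) or $1$ (Fermi) then concludes.

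In that formulation your worry about the directions $\bm{\omega},\bm{\sigma}$ disappears. The convergence holds pointwise for every direction. Only its non-uniformity in $\bm{c}$ needs absorbing by the averaging, for instance at $\bm{c}=\bm{u}\pm s\bm{\omega}/2$, where one velocity equals $\bm{u}$. Dominated convergence does this automatically.

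Your route is more hands-on but yields more. The sandwich $(1+z)^{-4}\le\mathcal{A}(\mathcal{N})\le(1-z)^{-4}$ gives $|\widetilde{\mathcal{A}}_{\mathcal{N}}(s)-1|=O(z)$ uniformly in $s$. The core/tail splitting at $|\bm{c}-\bm{u}|=\delta s$ with $0<\delta<1/2$ gives a rate exponentially small in $\lambda^2$. The paper's argument gives no rate at all.

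A few minor points:
\begin{itemize}
\item The energy-conservation bullet (one far velocity per pair) is unnecessary once you use the ball splitting.
\item The Gaussian tail mass of $\{|\bm{c}-\bm{u}|>\delta s\}$ is $O(\lambda e^{-\delta^2\lambda^2})$ rather than $O(e^{-\delta^2\lambda^2})$, which is harmless.
\item Continuity at $s=0$ rests on continuity of $\mathcal{A}(\mathcal{N})$ in $s$ plus its uniform bound. It does not rest on $\mathcal{N}^g(s)\to1$, since that factor cancels in the quotient.
\end{itemize}
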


\begin{proof}
Set $\bm{g}=\bm{v}-\bm{v}_*$.
The linear transformation $(\bm{v},\bm{v}_*)\mapsto(\bm{c},\bm{g})$ has unit absolute Jacobian.
Writing $\bm{g}=s\bm{\omega}$ in spherical coordinates gives
\begin{equation}
\dif\bm{v}\,\dif\bm{v}_*
=\dif\bm{c}\,\dif\bm{g},
\qquad
\dif\bm{g}=s^2\,\dif s\,\dif\bm{\omega},
\end{equation}
where $\dif\bm{\omega}$ is surface measure on $\mathbb{S}^2$.
Multiplying \eqref{eq:radial_projection} by its denominator and integrating against $F(s)s^2\,\dif s$ yields \eqref{eq:radial_moment_preservation} after this change of variables.

Since $\mathcal{N}\mathcal{N}_*>0$, the average inherits the pointwise bounds on $\mathcal{A}(\mathcal{N})$.
For Bose gases, $0<\theta_0\mathcal{N}(\bm{v})\le z<1$ implies
$1\le\mathcal{A}(\mathcal{N})\le(1-z)^{-4}$.
For Fermi gases, every factor in the denominator of \eqref{eq:auxiliary_factor} exceeds one, so
$0<\mathcal{A}(\mathcal{N})\le1$.

For the limits, \eqref{eq:Maxwellian_Separate} shows that the common factor
$(z/\theta_0)^2e^{-s^2/(4T)}$ cancels in \eqref{eq:radial_projection}.
After the rescaling $\bm{c}=\bm{u}+\sqrt{T}\bm{y}$,
the resulting normalized product of Gaussian and surface measures is independent of $z$ and $s/\sqrt{T}$.
In these variables, $\mathcal{A}(\mathcal{N})\to1$ pointwise in either limit.
For $0<z\le z_0<1$ in the Bose case, $\mathcal{A}(\mathcal{N})\le(1-z_0)^{-4}$; in the Fermi case, $\mathcal{A}(\mathcal{N})\le1$.
Dominated convergence therefore yields both limits.
\qed
\end{proof}

\begin{remark}
For each fixed relative speed $s$, the average in \eqref{eq:radial_projection} is the weighted $L^2$-projection of $\mathcal{A}(\mathcal{N})$ onto constants in $(\bm{c},\bm{\omega},\bm{\sigma})$:
\begin{equation}
\widetilde{\mathcal{A}}_{\mathcal{N}}(s)
=
\arg\min_{a\in\mathbb{R}}
\int_{\mathbb{R}^3}\int_{\mathbb{S}^2}\int_{\mathbb{S}^2}
\mathcal{N}\mathcal{N}_*
\left|\mathcal{A}(\mathcal{N})-a\right|^2
\,\dif\bm{\sigma}\,\dif\bm{\omega}\,\dif\bm{c}.
\end{equation}
\end{remark}

For numerical evaluation, the angular symmetry of \eqref{eq:radial_projection} reduces this average to two nested one-dimensional quadratures.
\begin{theorem}
\label{thm:precompute}
Let $\lambda=s/\sqrt{T}$.
Then $\widetilde{\mathcal{A}}_{\mathcal{N}}(s)=\mathcal{A}_\tau(z,\lambda)$, where
\begin{equation}
\label{eq:easy_version_aux}
\mathcal{A}_\tau(z,\lambda)
=
\frac{1}{\sqrt{\pi}}
\int_0^\infty q^2e^{-q^2}
\left[
\int_{-1}^{1}
\frac{\dif\xi}{
\left(1-\tau z e^{-\frac{q^2+\lambda^2/4+q\lambda\xi}{2}}\right)
\left(1-\tau z e^{-\frac{q^2+\lambda^2/4-q\lambda\xi}{2}}\right)}
\right]^2\dif q.
\end{equation}
\end{theorem}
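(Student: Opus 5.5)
The plan is to evaluate \eqref{eq:radial_projection} directly in the collision coordinates of Theorem~\ref{thm:radial_projection}, after a scaling that removes $T$ and $\bm{u}$. The identity should then follow from two observations. First, the Gaussian weight depends only on $|\bm{c}-\bm{u}|$ once the $s$-dependent factor cancels. Second, $\mathcal{A}(\mathcal{N})$ factorizes into a function of $\bm{\omega}$ times the same function of $\bm{\sigma}$.

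\emph{Rewriting $\mathcal{A}(\mathcal{N})$.} By \eqref{eq:Quantum_equil_to_Classical_Maxwellian}, $\theta_0\mathcal{N}(\bm{v})=z\exp(-|\bm{v}-\bm{u}|^2/(2T))$. Substitute $\bm{v}=\bm{c}+\tfrac{s}{2}\bm{\omega}$, $\bm{v}_*=\bm{c}-\tfrac{s}{2}\bm{\omega}$, $\bm{v}'=\bm{c}+\tfrac{s}{2}\bm{\sigma}$ and $\bm{v}_*'=\bm{c}-\tfrac{s}{2}\bm{\sigma}$. Then set $\bm{c}=\bm{u}+\sqrt{T}\bm{y}$ and $\lambda=s/\sqrt{T}$. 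This gives
\[
\frac{|\bm{v}-\bm{u}|^2}{2T}=\frac{|\bm{y}|^2+\lambda^2/4+\lambda\,\bm{y}\cdot\bm{\omega}}{2},
\]
and the same expression with $-\lambda\,\bm{y}\cdot\bm{\omega}$ for $\bm{v}_*$, and with $\bm{\sigma}$ in place of $\bm{\omega}$ for the post-collision pair. Hence
\[
\mathcal{A}(\mathcal{N})=h(|\bm{y}|,\hat{\bm{y}}\cdot\bm{\omega})\,h(|\bm{y}|,\hat{\bm{y}}\cdot\bm{\sigma}),
\]
where $h(q,\xi)$ is exactly the integrand of the inner $\xi$-integral in \eqref{eq:easy_version_aux}, with $q=|\bm{y}|$.

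\emph{The weight and the angular integrals.} By \eqref{eq:Maxwellian_Separate}, $\mathcal{N}\mathcal{N}_*=(z/\theta_0)^2e^{-s^2/(4T)}e^{-|\bm{y}|^2}$. The prefactor $(z/\theta_0)^2e^{-s^2/(4T)}$ cancels between numerator and denominator, as in the proof of Theorem~\ref{thm:radial_projection}. The Jacobian $T^{3/2}$ from $\dif\bm{c}=T^{3/2}\dif\bm{y}$ cancels as well.

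For fixed $\bm{y}$, the $\bm{\omega}$- and $\bm{\sigma}$-integrals decouple. Each one is a surface integral of a function of $\hat{\bm{y}}\cdot\bm{\omega}$ only. Using polar coordinates about $\hat{\bm{y}}$ gives
\[
\int_{\mathbb{S}^2}h(q,\hat{\bm{y}}\cdot\bm{\omega})\,\dif\bm{\omega}=2\pi\int_{-1}^1h(q,\xi)\,\dif\xi .
\]
The numerator therefore becomes $\int_{\mathbb{R}^3}e^{-|\bm{y}|^2}\bigl(2\pi\int_{-1}^1h\,\dif\xi\bigr)^2\dif\bm{y}$. In spherical coordinates in $\bm{y}$ this equals
\[
16\pi^3\int_0^\infty q^2e^{-q^2}\Bigl[\int_{-1}^1h(q,\xi)\,\dif\xi\Bigr]^2\dif q .
\]

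\emph{Normalizing constant.} The denominator is $(4\pi)^2\int_{\mathbb{R}^3}e^{-|\bm{y}|^2}\dif\bm{y}=16\pi^2\cdot\pi^{3/2}$. The ratio of constants is $16\pi^3/(16\pi^{7/2})=1/\sqrt{\pi}$, which yields \eqref{eq:easy_version_aux}.

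Absolute convergence and the use of Fubini are justified by the pointwise bounds on $\mathcal{A}(\mathcal{N})$ from Theorem~\ref{thm:radial_projection}, together with the Gaussian weight.

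The main obstacle is bookkeeping rather than analysis. One must check the signs of $\pm\lambda\,\bm{y}\cdot\bm{\omega}$, confirm that the $\bm{\omega}$ and $\bm{\sigma}$ factors carry the same $q$, and track the $4\pi$ and $2\pi$ factors that produce $1/\sqrt{\pi}$. The case $s=0$, i.e.\ $\lambda=0$, is consistent with the stated formula for $\widetilde{\mathcal{A}}_{\mathcal{N}}(0)$ and needs only a check.
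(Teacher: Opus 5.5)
Your proposal is correct and follows essentially the same route as the paper's proof. Both rescale $\bm{c}=\bm{u}+\sqrt{T}\bm{y}$, cancel the common factor $(z/\theta_0)^2T^{3/2}e^{-\lambda^2/4}$, split $\mathcal{A}(\mathcal{N})$ into identical $\bm{\omega}$- and $\bm{\sigma}$-factors reduced to $\xi$-integrals, and integrate radially in $\bm{y}$ to obtain the prefactor $1/\sqrt{\pi}$. Your constants ($16\pi^3/16\pi^{7/2}$) agree with the paper's, which pulls out $(4\pi)^2$ and keeps a $\tfrac12$ inside the bracket.
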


\begin{proof}
Set $\bm{c}=\bm{u}+\sqrt{T}\bm{y}$ and $q=|\bm{y}|$.
For $q>0$, write $\xi=(\bm{y}/q)\cdot\bm{\omega}$; at $q=0$, the expressions below are independent of $\xi$.
By \eqref{eq:Quantum_equil_to_Classical_Maxwellian},
\begin{equation*}
\theta_0\mathcal{N}(\bm{c}\pm s\bm{\omega}/2)
=z\exp\left(-\frac{|\bm{y}\pm\lambda\bm{\omega}/2|^2}{2}\right)
=z\exp\left(-\frac{q^2+\lambda^2/4\pm q\lambda\xi}{2}\right).
\end{equation*}
Thus $\mathcal{N}\mathcal{N}_*=(z/\theta_0)^2e^{-q^2-\lambda^2/4}$.
The incoming and outgoing pairs are $\bm{c}\pm s\bm{\omega}/2$ and $\bm{c}\pm s\bm{\sigma}/2$, so the two angular integrals in the numerator of \eqref{eq:radial_projection} separate and coincide.
Hence,
\begin{equation*}
\begin{aligned}
&\int_{\mathbb{R}^3}\int_{\mathbb{S}^2}\int_{\mathbb{S}^2}
\mathcal{N}\mathcal{N}_*\mathcal{A}(\mathcal{N})
\,\dif\bm{\sigma}\,\dif\bm{\omega}\,\dif\bm{c}\\
&\quad=\left(\frac{z}{\theta_0}\right)^2T^{3/2}e^{-\lambda^2/4}
\int_{\mathbb{R}^3}e^{-|\bm{y}|^2}
\left[
\int_{\mathbb{S}^2}
\frac{\dif\bm{\omega}}{
\left(1-\tau z e^{-\frac{|\bm{y}+\lambda\bm{\omega}/2|^2}{2}}\right)
\left(1-\tau z e^{-\frac{|\bm{y}-\lambda\bm{\omega}/2|^2}{2}}\right)}
\right]^2\dif\bm{y}\\
&\quad=\left(\frac{z}{\theta_0}\right)^2T^{3/2}e^{-\lambda^2/4}(4\pi)^2
\int_{\mathbb{R}^3}e^{-|\bm{y}|^2}
\left[
\frac12\int_{-1}^{1}
\frac{\dif\xi}{
\left(1-\tau z e^{-\frac{q^2+\lambda^2/4+q\lambda\xi}{2}}\right)
\left(1-\tau z e^{-\frac{q^2+\lambda^2/4-q\lambda\xi}{2}}\right)}
\right]^2\dif\bm{y}.
\end{aligned}
\end{equation*}
For the denominator, the same change of variables leaves a Gaussian integral of value $\pi^{3/2}$ and two spherical integrals, each of value $4\pi$.
Hence,
\begin{equation*}
\int_{\mathbb{R}^3}\int_{\mathbb{S}^2}\int_{\mathbb{S}^2}
\mathcal{N}\mathcal{N}_*
\,\dif\bm{\sigma}\,\dif\bm{\omega}\,\dif\bm{c}
=\left(\frac{z}{\theta_0}\right)^2T^{3/2}e^{-\lambda^2/4}
\pi^{3/2}(4\pi)^2.
\end{equation*}
Eliminating the common factor $(z/\theta_0)^2T^{3/2}e^{-\lambda^2/4}(4\pi)^2$ and integrating radially in $\bm{y}$ gives
\begin{equation}
\label{eq:radial_angular_average}
\begin{aligned}
\widetilde{\mathcal{A}}_{\mathcal{N}}(s)
&=\frac{1}{\pi^{3/2}}\int_{\mathbb{R}^3}e^{-|\bm{y}|^2}
\left[
\frac12\int_{-1}^{1}
\frac{\dif\xi}{
\left(1-\tau z e^{-\frac{q^2+\lambda^2/4+q\lambda\xi}{2}}\right)
\left(1-\tau z e^{-\frac{q^2+\lambda^2/4-q\lambda\xi}{2}}\right)}
\right]^2\dif\bm{y}\\
&=\frac{1}{\sqrt{\pi}}\int_0^\infty q^2e^{-q^2}
\left[
\int_{-1}^{1}
\frac{\dif\xi}{
\left(1-\tau z e^{-\frac{q^2+\lambda^2/4+q\lambda\xi}{2}}\right)
\left(1-\tau z e^{-\frac{q^2+\lambda^2/4-q\lambda\xi}{2}}\right)}
\right]^2\dif q
=\mathcal{A}_\tau(z,\lambda).
\end{aligned}
\end{equation}
For fixed $z$, the radial factor is independent of $\bm{u}$ and $\theta_0$, and its dependence on $T$ is only through $\lambda=s/\sqrt{T}$.
\qed
\end{proof}

\begin{remark}
We estimate the computational cost of tabulating $\widetilde{\mathcal{A}}_{\mathcal{N}}$ using \eqref{eq:easy_version_aux}.
For prescribed $z$ and $T$, tabulate \eqref{eq:easy_version_aux} at $J=O(N_v)$ radial nodes $\{s_k\}_{k=1}^J$.
With $P=O(N_v)$ quadrature nodes for each of the $q$ and $\xi$ integrals, this requires $O(JP^2)=O(N_v^3)$ operations.
This preprocessing cost is of lower asymptotic order than the target $O(N_v^4\log N_v)$ cost of collision evaluation.
\end{remark}
\subsection{Conservation and the Linearized H-Theorem}
Replacing $\mathcal{A}(\mathcal{N})$ in \eqref{eq:linearized_collision} by its radial projection gives the effective linear operator
\begin{equation}
\label{eq:augmented_L}
\widetilde{\mathcal{L}}_q(r)(\bm{v})
=\int_{\mathbb{R}^3}\int_{\mathbb{S}^2}
B(s)\widetilde{\mathcal{A}}_{\mathcal{N}}(s)
\mathcal{N}\mathcal{N}_*
(r'+r_*'-r-r_*)\,\dif\bm{\sigma}\,\dif\bm{v}_*,
\qquad s=|\bm{v}-\bm{v}_*|.
\end{equation}
Below, we are going to derive its weak form, which will further yield the conservation properties and the linearized H-theorem.
Here we assume that all functions are real-valued and all integrals below converge absolutely.

\begin{theorem}[Weak form]
\label{Thm:Weak}
Let $\Delta r=r'+r_*'-r-r_*$ and define $\Delta k$ analogously for a test function $k$.
Then
\begin{equation}
\label{eq:weak}
\int_{\mathbb{R}^3}k\,\widetilde{\mathcal{L}}_q(r)\,\dif\bm{v}
=-\frac14\int_{\mathbb{R}^3}\int_{\mathbb{R}^3}\int_{\mathbb{S}^2}
B(s)\widetilde{\mathcal{A}}_{\mathcal{N}}(s)
\mathcal{N}\mathcal{N}_*\,\Delta k\,\Delta r
\,\dif\bm{\sigma}\,\dif\bm{v}_*\,\dif\bm{v}.
\end{equation}
\end{theorem}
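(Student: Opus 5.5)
The identity \eqref{eq:weak} follows from the standard symmetrization argument for Boltzmann-type operators. We multiply \eqref{eq:augmented_L} by $k(\bm{v})$, integrate in $\bm{v}$, and exploit two measure-preserving involutions of the collision integral, after checking that the weight $W\coloneqq B(s)\widetilde{\mathcal{A}}_{\mathcal{N}}(s)\mathcal{N}\mathcal{N}_*$ is invariant under both. Starting from
\begin{equation*}
I\coloneqq\int_{\mathbb{R}^3}k\,\widetilde{\mathcal{L}}_q(r)\,\dif\bm{v}
=\int_{\mathbb{R}^3}\int_{\mathbb{R}^3}\int_{\mathbb{S}^2} W\,k\,\Delta r\,\dif\bm{\sigma}\,\dif\bm{v}_*\,\dif\bm{v},
\end{equation*}
we first apply the swap $(\bm{v},\bm{v}_*)\mapsto(\bm{v}_*,\bm{v})$ combined with $\bm{\sigma}\mapsto-\bm{\sigma}$. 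This leaves $s$ and $W$ unchanged, exchanges $\bm{v}'\leftrightarrow\bm{v}_*'$, and so preserves $\Delta r$. Hence $I=\int W\,k_*\,\Delta r$. Averaging the two expressions gives $I=\tfrac12\int W\,(k+k_*)\,\Delta r$.

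Next we apply the pre/post-collision map $(\bm{v},\bm{v}_*,\bm{\sigma})\mapsto(\bm{v}',\bm{v}_*',\bm{\omega})$ with $\bm{\omega}=(\bm{v}-\bm{v}_*)/s$. In the coordinates $(\bm{c},s,\bm{\omega},\bm{\sigma})$ of Theorem~\ref{thm:radial_projection}, this map simply exchanges $\bm{\omega}$ and $\bm{\sigma}$ while fixing $\bm{c}$ and $s$. By the change of variables $\dif\bm{v}\,\dif\bm{v}_*=s^2\,\dif\bm{c}\,\dif s\,\dif\bm{\omega}$ used in that proof, the measure $\dif\bm{\sigma}\,\dif\bm{v}_*\,\dif\bm{v}=s^2\,\dif\bm{c}\,\dif s\,\dif\bm{\omega}\,\dif\bm{\sigma}$ is preserved. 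The weight is also unchanged. The factors $B(s)$ and $\widetilde{\mathcal{A}}_{\mathcal{N}}(s)$ depend only on the preserved relative speed, and this is exactly where the radial projection matters. Moreover, $\mathcal{N}\mathcal{N}_*=\mathcal{N}'\mathcal{N}_*'$ by \eqref{eq:Maxwellian_Separate}. Under this map $\Delta r\mapsto-\Delta r$, so $I=-\tfrac12\int W\,(k'+k_*')\,\Delta r$. Averaging with the previous expression yields $I=-\tfrac14\int W\,\Delta k\,\Delta r$, which is \eqref{eq:weak}.

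The main obstacle is justifying the second change of variables cleanly. Rather than computing a Jacobian in $(\bm{v},\bm{v}_*,\bm{\sigma})$ directly, we pass through the collision coordinates and observe that the map is the swap $\bm{\omega}\leftrightarrow\bm{\sigma}$, which trivially preserves the product surface measure. The set $s=0$ has measure zero and can be ignored. Absolute convergence, assumed in the statement, allows Fubini and splitting $\Delta r$ into its four terms. We will also note that invariance of $W$ would fail for the unprojected factor only if it were not symmetric; in fact $\mathcal{A}(\mathcal{N})$ is symmetric as well, so the point of the radial form here is only compatibility with the collision coordinates, not a new symmetry.
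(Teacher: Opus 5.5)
Your proposal is correct and is essentially the paper's argument: the paper also works in the collision coordinates $(\bm{c},s,\bm{\omega},\bm{\sigma})$ and averages over the two involutions $(\bm{\omega},\bm{\sigma})\mapsto(-\bm{\omega},-\bm{\sigma})$ (your swap $\bm{v}\leftrightarrow\bm{v}_*$ with $\bm{\sigma}\mapsto-\bm{\sigma}$) and $(\bm{\omega},\bm{\sigma})\mapsto(\bm{\sigma},\bm{\omega})$ (your pre/post exchange), which gives the four test factors $k,k_*,-k',-k_*'$. Your closing remark is also right: $\mathcal{A}(\mathcal{N})$ is itself invariant under both maps, so the identity holds for the exact $\mathcal{L}_q$ too, and the paper uses the radial form only to pull the weight outside the angular integrals.
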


\begin{proof}
To obtain $\Delta k$, we symmetrize the test factor over the four collision velocities. 
Recall the collision coordinates $(\bm{c},s,\bm{\omega},\bm{\sigma})$ from Theorem~\ref{thm:radial_projection}, with $\bm{g}=s\bm{\omega}$ and
\begin{equation*}
\dif\bm{v}\,\dif\bm{v}_*=\dif\bm{c}\,\dif\bm{g},
\qquad
\dif\bm{g}=s^2\,\dif s\,\dif\bm{\omega}.
\end{equation*}
The inverse relations are $\bm{v}=\bm{c}+s\bm{\omega}/2$ and $\bm{v}_*=\bm{c}-s\bm{\omega}/2$, with $\bm{\sigma}$ unchanged.
Using $\mathcal{N}\mathcal{N}_*=\mathcal{N}^{c}(\bm{c})e^{-s^2/(4T)}$ from \eqref{eq:Maxwellian_Separate}, we obtain
\begin{equation*}
\begin{aligned}
&\int_{\mathbb{R}^3}k\,\widetilde{\mathcal{L}}_q(r)\,\dif\bm{v}=\int_0^\infty\int_{\mathbb{R}^3}
s^2B(s)\widetilde{\mathcal{A}}_{\mathcal{N}}(s)
\mathcal{N}^{c}(\bm{c})e^{-s^2/(4T)}
\left[
\int_{\mathbb{S}^2}\int_{\mathbb{S}^2}
k\left(\bm{c}+\frac{s\bm{\omega}}2\right)\Delta r
\,\dif\bm{\sigma}\,\dif\bm{\omega}
\right]\dif\bm{c}\,\dif s.
\end{aligned}
\end{equation*}
The weight outside the angular integral depends only on $\bm{c}$ and $s$.

The substitution $(\bm{\omega},\bm{\sigma})\mapsto(-\bm{\omega},-\bm{\sigma})$ gives
\begin{equation*}
\int_{\mathbb{S}^2}\int_{\mathbb{S}^2}
k\left(\bm{c}+\frac{s\bm{\omega}}2\right)\Delta r
\,\dif\bm{\sigma}\,\dif\bm{\omega}
=
\int_{\mathbb{S}^2}\int_{\mathbb{S}^2}
k\left(\bm{c}-\frac{s\bm{\omega}}2\right)\Delta r
\,\dif\bm{\sigma}\,\dif\bm{\omega}.
\end{equation*}
Next, $(\bm{\omega},\bm{\sigma})\mapsto(\bm{\sigma},\bm{\omega})$ gives, for either sign,
\begin{equation*}
\int_{\mathbb{S}^2}\int_{\mathbb{S}^2}
k\left(\bm{c}\pm\frac{s\bm{\omega}}2\right)\Delta r
\,\dif\bm{\sigma}\,\dif\bm{\omega}
=
-\int_{\mathbb{S}^2}\int_{\mathbb{S}^2}
k\left(\bm{c}\pm\frac{s\bm{\sigma}}2\right)\Delta r
\,\dif\bm{\sigma}\,\dif\bm{\omega}.
\end{equation*}
Thus the original integral has four equivalent representations with test factors $k$, $k_*$, $-k'$, and $-k_*'$.
Averaging them and returning to $(\bm{v},\bm{v}_*,\bm{\sigma})$ gives
\begin{equation*}
\int_{\mathbb{R}^3}k\,\widetilde{\mathcal{L}}_q(r)\,\dif\bm{v}
=\frac14\int_{\mathbb{R}^3}\int_{\mathbb{R}^3}\int_{\mathbb{S}^2}
B(s)\widetilde{\mathcal{A}}_{\mathcal{N}}(s)
\mathcal{N}\mathcal{N}_*(k+k_*-k'-k_*')\Delta r
\,\dif\bm{\sigma}\,\dif\bm{v}_*\,\dif\bm{v}.
\end{equation*}
Since $k+k_*-k'-k_*'=-\Delta k$, this is \eqref{eq:weak}.
\qed
\end{proof}

Taking $k$ to be a collision invariant gives $\Delta k=0$ and hence conservation of mass, momentum, and energy:
\begin{equation}
\label{eq:Conservation}
\int_{\mathbb{R}^3}
\begin{pmatrix}1\\ \bm{v}\\ |\bm{v}|^2\end{pmatrix}
\widetilde{\mathcal{L}}_q(r)\,\dif\bm{v}=\bm{0}.
\end{equation}
Thus the approximate kinetic equation
\begin{equation}
\label{eq:approx_kinetic}
\frac{\partial f}{\partial t}+\bm{v}\cdot\nabla_{\bm{x}}f
=\frac{1}{\Kn}\widetilde{\mathcal{L}}_q
\left(\frac{f-\mathcal{M}_q}{\mathcal{M}_q(1+\tau\theta_0\mathcal{M}_q)}\right)
\end{equation}
has the same moment balances as \eqref{eq:QBE}:
\begin{equation}
\label{eq:Quantum_Euler}
\begin{aligned}
&\frac{\partial\rho}{\partial t}
+\nabla_{\bm{x}}\cdot(\rho\bm{u})=0,\\
&\frac{\partial(\rho\bm{u})}{\partial t}
+\nabla_{\bm{x}}\cdot(\rho\bm{u}\otimes\bm{u}+\mathbb{P})=0,\\
&\frac{\partial}{\partial t}
\left(\frac32p+\frac12\rho|\bm{u}|^2\right)
+\nabla_{\bm{x}}\cdot
\left[\left(\frac32p+\frac12\rho|\bm{u}|^2\right)\bm{u}
+\mathbb{P}\bm{u}+\bm{q}\right]=0,
\end{aligned}
\end{equation}
with the moments defined in \eqref{eq:momentums}.
The equilibrium closure $\mathbb{P}=pI$ and $\bm{q}=\bm{0}$ gives the quantum Euler system \citep{filbet2010numerical}.

\begin{theorem}[Linearized H-theorem]
\label{thm:linearized_H}
At fixed time and position, let $\mathcal{M}_q$ be an admissible equilibrium.
For the VHS kernel \eqref{eq:B_VHS} with $C_B>0$ and any admissible
$r\in L^2(\mathbb{R}^3;\mathcal{M}_q(1+\tau\theta_0\mathcal{M}_q)\,\dif\bm{v})$, $\widetilde{\mathcal{L}}_q$ satisfies \textit{linearized Boltzmann's H-Theorem}
\begin{equation*}
\int_{\mathbb{R}^3}r\,\widetilde{\mathcal{L}}_q(r)\,\dif\bm{v}\le0.
\end{equation*}
Set $r=(f-\mathcal{M}_q)/[\mathcal{M}_q(1+\tau\theta_0\mathcal{M}_q)]$.
If $\mathcal{M}_q$ has the same mass, momentum, and energy as $f$, then
\begin{equation}
\label{eq:linearized_H_equality}
\int_{\mathbb{R}^3}r\,\widetilde{\mathcal{L}}_q(r)\,\dif\bm{v}=0
\quad\Longleftrightarrow\quad
\widetilde{\mathcal{L}}_q(r)=0
\quad\Longleftrightarrow\quad
f=\mathcal{M}_q.
\end{equation}
\end{theorem}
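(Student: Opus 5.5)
The plan is to apply the weak form of Theorem~\ref{Thm:Weak} with test function $k=r$. This gives
\begin{equation*}
\int_{\mathbb{R}^3}r\,\widetilde{\mathcal{L}}_q(r)\,\dif\bm{v}
=-\frac14\int_{\mathbb{R}^3}\int_{\mathbb{R}^3}\int_{\mathbb{S}^2}
B(s)\widetilde{\mathcal{A}}_{\mathcal{N}}(s)\mathcal{N}\mathcal{N}_*\,(\Delta r)^2
\,\dif\bm{\sigma}\,\dif\bm{v}_*\,\dif\bm{v}.
\end{equation*}
Every factor of the weight is nonnegative: $B(s)=C_B s^{2(1-w)}>0$ for $s>0$, $\mathcal{N}\mathcal{N}_*>0$, and $\widetilde{\mathcal{A}}_{\mathcal{N}}(s)>0$ by Theorem~\ref{thm:radial_projection}. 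The inequality follows at once.

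The one point needing care is absolute convergence for $r$ in the stated weighted $L^2$ space, which the paper assumes. To make this plausible I would note two bounds. First, $\mathcal{M}_q(1+\tau\theta_0\mathcal{M}_q)$ is comparable to $\mathcal{N}$ up to constants depending on $z$, using the bounds on $1-\tau\theta_0\mathcal{N}$ from the proof of Theorem~\ref{thm:radial_projection}. Second, $(\Delta r)^2\le 4(r^2+r_*^2+r'^2+r_*'^2)$, so the integral reduces to classical weighted estimates. I would state this as inherited from the standing assumption rather than prove it in full.

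For the equivalences, the implication $\widetilde{\mathcal{L}}_q(r)=0\Rightarrow\int r\widetilde{\mathcal{L}}_q(r)=0$ is trivial, and $f=\mathcal{M}_q\Rightarrow r=0\Rightarrow\widetilde{\mathcal{L}}_q(r)=0$ is immediate. The real content is showing that $\int r\widetilde{\mathcal{L}}_q(r)=0$ implies $f=\mathcal{M}_q$. Since the weight is strictly positive for almost every $(\bm{v},\bm{v}_*,\bm{\sigma})$, vanishing of the quadratic form forces $\Delta r=0$ almost everywhere, i.e.\ $r+r_*=r'+r_*'$. I would then invoke the classical characterization of collision invariants: any measurable, locally integrable $r$ satisfying this identity almost everywhere is of the form $a+\bm{b}\cdot\bm{v}+c|\bm{v}|^2$. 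This is the main obstacle. I would cite the standard result (Cercignani, or Arkeryd's measurable version) rather than reprove it; the weighted $L^2$ hypothesis gives the local integrability it needs.

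Finally, I would use moment matching. Writing $f-\mathcal{M}_q=\mathcal{M}_q(1+\tau\theta_0\mathcal{M}_q)\,r$, the hypothesis that $f$ and $\mathcal{M}_q$ share mass, momentum and energy gives $\int \phi\,\mathcal{M}_q(1+\tau\theta_0\mathcal{M}_q)\,r\,\dif\bm{v}=0$ for $\phi\in\{1,\bm{v},|\bm{v}|^2\}$. Multiplying by the coefficients of $r$ and summing yields $\int \mathcal{M}_q(1+\tau\theta_0\mathcal{M}_q)r^2\,\dif\bm{v}=0$. The weight is strictly positive: in the Bose case trivially, and in the Fermi case because $1-\theta_0\mathcal{M}_q=1/(1+\theta_0\mathcal{N})>0$. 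Hence $r=0$ almost everywhere, so $f=\mathcal{M}_q$, which closes the chain of equivalences.
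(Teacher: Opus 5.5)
Your proposal is correct and follows the paper's proof essentially step for step. The steps match: the weak form with $k=r$, strict positivity of the weight forcing $\Delta r=0$ almost everywhere, the Arkeryd--Cercignani characterization of collision invariants, and moment matching to obtain $\int_{\mathbb{R}^3}\mathcal{M}_q(1+\tau\theta_0\mathcal{M}_q)r^2\,\dif\bm{v}=0$. The only difference is that you close the chain of equivalences through $f=\mathcal{M}_q$, whereas the paper reads $\Delta r=0\Leftrightarrow\widetilde{\mathcal{L}}_q(r)=0$ directly off \eqref{eq:augmented_L}.

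One small caution on your convergence remark: for $w<1$ the collision frequency grows like $|\bm{v}|^{2(1-w)}$, so being in the weighted $L^2$ space does not by itself make the quadratic form finite. Finiteness genuinely rests on the paper's standing absolute-convergence assumption, which is where you ultimately place it.
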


\begin{proof}
Taking $k=r$ in \eqref{eq:weak} gives
\begin{equation}
\label{eq:L_dissipation}
\int_{\mathbb{R}^3}r\,\widetilde{\mathcal{L}}_q(r)\,\dif\bm{v}
=-\frac14\int_{\mathbb{R}^3}\int_{\mathbb{R}^3}\int_{\mathbb{S}^2}
B(s)\widetilde{\mathcal{A}}_{\mathcal{N}}(s)
\mathcal{N}\mathcal{N}_*|\Delta r|^2
\,\dif\bm{\sigma}\,\dif\bm{v}_*\,\dif\bm{v}\le0.
\end{equation}
The collision weight is positive almost everywhere, so equality holds precisely when $\Delta r=0$ almost everywhere.
This is equivalent to $\widetilde{\mathcal{L}}_q(r)=0$ by \eqref{eq:augmented_L} and \eqref{eq:L_dissipation}.
The characterization of collision invariants \citep{ArkerydCercignani1990} then gives
$r\in\operatorname{span}\{1,v_1,v_2,v_3,|\bm{v}|^2\}$.
The moment-matching condition gives
\begin{equation*}
\int_{\mathbb{R}^3}
(f-\mathcal{M}_q)
\begin{pmatrix}1\\ \bm{v}\\ |\bm{v}|^2\end{pmatrix}
\,\dif\bm{v}=\bm{0}.
\end{equation*}
Since $r$ is a linear combination of these collision invariants,
\begin{equation*}
0=\int_{\mathbb{R}^3}(f-\mathcal{M}_q)r\,\dif\bm{v}
=\int_{\mathbb{R}^3}\mathcal{M}_q(1+\tau\theta_0\mathcal{M}_q)r^2\,\dif\bm{v},
\end{equation*}
where the second equality uses the stated choice of $r$.
The weight $\mathcal{M}_q(1+\tau\theta_0\mathcal{M}_q)$ is strictly positive, so $r=0$ almost everywhere and $f=\mathcal{M}_q$.
Conversely, $f=\mathcal{M}_q$ gives $r=0$, proving \eqref{eq:linearized_H_equality}.
\qed
\end{proof}

% \begin{remark}
% The entropy variable in \eqref{eq:H_Thm} has derivative
% $1/[f(1+\tau\theta_0f)]$.
% Thus $r=(f-\mathcal{M}_q)/[\mathcal{M}_q(1+\tau\theta_0\mathcal{M}_q)]$
% is its first-order change about $\mathcal{M}_q$.
% Its equilibrium value
% $\log\mathcal{N}=\log(z/\theta_0)-|\bm{v}-\bm{u}|^2/(2T)$
% is a collision invariant and contributes nothing to the collision entropy production.
% The corresponding convex quantum H-functional has quadratic part
% \begin{equation*}
% \frac12\int_{\mathbb{R}^3}
% \frac{(f-\mathcal{M}_q)^2}{\mathcal{M}_q(1+\tau\theta_0\mathcal{M}_q)}
% \,\dif\bm{v}
% =\frac12\int_{\mathbb{R}^3}
% \mathcal{M}_q(1+\tau\theta_0\mathcal{M}_q)r^2\,\dif\bm{v}.
% \end{equation*}
% Accordingly, \eqref{eq:L_dissipation} is the quadratic counterpart of \eqref{eq:H_Thm}, and \eqref{eq:linearized_H_equality} is the linear analogue of the equilibrium characterization in \citet[Eq.~(2.7)]{filbet2010numerical}.
% Moment matching removes the nonzero collision-invariant modes, leaving $f=\mathcal{M}_q$ as the only case of equality.
% This does not assert the full nonlinear entropy inequality for the effective operator.
% \end{remark}

\begin{theorem}[Kernel]
\label{thm:effective_kernel}
For the VHS kernel \eqref{eq:B_VHS} with $C_B>0$, the kernel of $\widetilde{\mathcal{L}}_q$, acting on admissible
$r\in L^2(\mathbb{R}^3;\mathcal{M}_q(1+\tau\theta_0\mathcal{M}_q)\,\dif\bm{v})$, is
\begin{equation*}
\ker\widetilde{\mathcal{L}}_q
=\operatorname{span}\{1,v_1,v_2,v_3,|\bm{v}|^2\}.
\end{equation*}
\end{theorem}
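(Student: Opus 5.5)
We prove the two inclusions separately. Both directions rest on the weak form of Theorem~\ref{Thm:Weak} with the test function $k=r$. That choice gives the dissipation identity \eqref{eq:L_dissipation}.

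\emph{Inclusion $\supseteq$.} Let $r=a+\bm{b}\cdot\bm{v}+c|\bm{v}|^2$. Conservation of momentum and energy in elastic collisions, $\bm{v}+\bm{v}_*=\bm{v}'+\bm{v}_*'$ and $|\bm{v}|^2+|\bm{v}_*|^2=|\bm{v}'|^2+|\bm{v}_*'|^2$, gives $\Delta r=r'+r_*'-r-r_*=0$ pointwise. Then the integrand of \eqref{eq:augmented_L} vanishes identically, so $\widetilde{\mathcal{L}}_q(r)=0$.

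We must also check that these $r$ are admissible, i.e.\ that they lie in $L^2(\mathbb{R}^3;\mathcal{M}_q(1+\tau\theta_0\mathcal{M}_q)\,\dif\bm{v})$. This holds because the weight equals
\[
\mathcal{M}_q(1+\tau\theta_0\mathcal{M}_q)=\mathcal{N}/(1-\tau\theta_0\mathcal{N})^2,
\]
which is bounded by a constant multiple of the Gaussian $\mathcal{N}$. For Bose gases the constant is $(1-z)^{-2}$; for Fermi gases it is $1$. Polynomials are square integrable against a Gaussian, so every element of the span is admissible.

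\emph{Inclusion $\subseteq$.} Suppose $\widetilde{\mathcal{L}}_q(r)=0$ for an admissible $r$. Then $\int r\,\widetilde{\mathcal{L}}_q(r)\,\dif\bm{v}=0$, and \eqref{eq:L_dissipation} gives
\[
\int_{\mathbb{R}^3}\int_{\mathbb{R}^3}\int_{\mathbb{S}^2} B(s)\widetilde{\mathcal{A}}_{\mathcal{N}}(s)\mathcal{N}\mathcal{N}_*|\Delta r|^2\,\dif\bm{\sigma}\,\dif\bm{v}_*\,\dif\bm{v}=0.
\]
The weight in this integral is strictly positive for $s>0$. Indeed, $B(s)=C_Bs^{2(1-w)}>0$, $\mathcal{N}\mathcal{N}_*>0$, and $\widetilde{\mathcal{A}}_{\mathcal{N}}(s)>0$ by Theorem~\ref{thm:radial_projection}. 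The set $\{s=0\}$ has measure zero. Hence $\Delta r=0$ for almost every $(\bm{v},\bm{v}_*,\bm{\sigma})$.

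The characterization of measurable collision invariants \citep{ArkerydCercignani1990} then yields $r\in\operatorname{span}\{1,v_1,v_2,v_3,|\bm{v}|^2\}$ almost everywhere. This is the same step used in the proof of Theorem~\ref{thm:linearized_H}.

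\emph{Main obstacle.} The delicate point is justifying the use of the weak form, which needs absolute convergence of the integrals for a general admissible $r$. The VHS factor $s^{2(1-w)}$ grows polynomially, while $\widetilde{\mathcal{A}}_{\mathcal{N}}$ is bounded; for Bose gases $\widetilde{\mathcal{A}}_{\mathcal{N}}\le(1-z)^{-4}$. So one needs $\int\!\!\int |\bm{v}-\bm{v}_*|^{2(1-w)}\mathcal{N}\mathcal{N}_*(r^2+r_*^2+r'^2+r_*'^2)<\infty$.

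To handle this, I would first use the pre/post-collision symmetry, under which $\mathcal{N}\mathcal{N}_*=\mathcal{N}'\mathcal{N}_*'$, to reduce the estimate to the $r^2$ term. For that term, the bound
\[
\int_{\mathbb{R}^3}|\bm{v}-\bm{v}_*|^{2(1-w)}\mathcal{N}_*\,\dif\bm{v}_*\lesssim(1+|\bm{v}|)^{2(1-w)}
\]
reduces the requirement to a weighted-$L^2$ condition on $r$. I would take this condition as part of what ``admissible'' means, consistent with the paper's standing assumption of absolute convergence.

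With absolute convergence in hand, $\Delta r$ lies in $L^2$ of the collision measure and the integral identities above are legitimate. The two inclusions together give $\ker\widetilde{\mathcal{L}}_q=\operatorname{span}\{1,v_1,v_2,v_3,|\bm{v}|^2\}$.
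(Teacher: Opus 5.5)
Your proposal is correct and follows the paper's proof. The inclusion $\supseteq$ comes from $\Delta r=0$ for collision invariants. The inclusion $\subseteq$ comes from taking $k=r$ in the weak form, using positivity of the collision weight to get $\Delta r=0$ a.e., and then invoking the Arkeryd--Cercignani characterization, exactly as in the proof of Theorem~\ref{thm:linearized_H}. Your added checks are consistent refinements that the paper omits, since it simply assumes absolute convergence: that the span lies in the weighted $L^2$ space, and that the integrals in the weak form converge absolutely.
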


\begin{proof}
The proof of Theorem~\ref{thm:linearized_H} places every null mode in the stated span.
Conversely, each function in this span has $\Delta r=0$ and is therefore annihilated by \eqref{eq:augmented_L}.
\qed
\end{proof}

\begin{remark}
In terms of the physical perturbation $f-\mathcal{M}_q=\mathcal{M}_q(1+\tau\theta_0\mathcal{M}_q)r$, the null perturbations form the space
\begin{equation*}
\mathcal{M}_q(1+\tau\theta_0\mathcal{M}_q)
\operatorname{span}\{1,v_1,v_2,v_3,|\bm{v}|^2\}.
\end{equation*}
This is the tangent space of the quantum equilibrium manifold at $\mathcal{M}_q$, obtained by varying $z$, $\bm{u}$, and $T$ in \eqref{eq:local_equilibrium_QBE}.
Thus the approximation retains the equilibrium tangent modes of the exact linearization.
Equality of the kernels does not, however, imply equality of the nonzero spectra, relaxation rates, or transport coefficients.
\end{remark}
\subsection{Fast Spectral Realization}
The radial factor in \eqref{eq:augmented_L} allows us to apply the fast algorithm of \citet{yin2026fast}.
With $\eta=\mathcal{N}r$, write
$\widetilde{\mathcal{L}}_q(r)=\widetilde{\mathcal{L}}^+(r)-\widetilde{\mathcal{L}}^-(r)$,
where the gain and loss terms are
\begin{equation}
\label{eq:effective_linearized_form}
\begin{aligned}
\widetilde{\mathcal{L}}^+(r)(\bm{v})
&=\int_{\mathbb{R}^3}\int_{\mathbb{S}^2}
B(s)\widetilde{\mathcal{A}}_{\mathcal{N}}(s)
\bigl(\mathcal{N}_*'\eta'+\mathcal{N}'\eta_*'\bigr)
\,\dif\bm{\sigma}\,\dif\bm{v}_*,\\
\widetilde{\mathcal{L}}^-(r)(\bm{v})
&=\int_{\mathbb{R}^3}\int_{\mathbb{S}^2}
B(s)\widetilde{\mathcal{A}}_{\mathcal{N}}(s)
\bigl(\mathcal{N}_*\eta+\mathcal{N}\eta_*\bigr)
\,\dif\bm{\sigma}\,\dif\bm{v}_*,
\qquad s=|\bm{v}-\bm{v}_*|.
\end{aligned}
\end{equation}
We retain the domain $\mathcal{D}_L$, cutoff $R$, grid, and Fourier convention of Section~\ref{sec:FSM_Direct}.
With sums over the retained Fourier modes, the loss coefficients have the form
\begin{equation}
\label{eq:effective_loss_modes}
\begin{aligned}
\widehat{\widetilde{\mathcal{L}}}^{-}_{\bm{k}}
&\approx16\pi^2\sum_{\bm{\ell}}
\bigl(\widehat{\mathcal{N}}_{\bm{k}-\bm{\ell}}\widehat{\eta}_{\bm{\ell}}
+\widehat{\eta}_{\bm{k}-\bm{\ell}}\widehat{\mathcal{N}}_{\bm{\ell}}\bigr)\times\int_0^R s^2B(s)\widetilde{\mathcal{A}}_{\mathcal{N}}(s)
\operatorname{sinc}\!\left(\frac{\pi s|\bm{\ell}|}{L}\right)\,\dif s,
\end{aligned}
\end{equation}
where $\operatorname{sinc}(a)=\sin(a)/a$ and $\operatorname{sinc}(0)=1$.
% With the normalization \eqref{eq:Fourier_expansion}, the coefficient convolution carries no additional factor $(2L)^{-3}$; $16\pi^2$ comes from the two spherical integrations.
The radial integral depends only on $\bm{\ell}$ and can be precomputed, leaving two convolutions.

The post-collision velocities couple the angular dependence in the gain term.
Following \citet{yin2026fast}, we use $r=\eta/\mathcal{N}$ and the Gaussian factorization \eqref{eq:Maxwellian_Separate} to obtain
\begin{equation}
\label{eq:effective_gain_modes}
\widehat{\widetilde{\mathcal{L}}}^{+}_{\bm{k}}
\approx\frac{4\pi^2}{L^3}\int_0^R
s^2B(s)\widetilde{\mathcal{A}}_{\mathcal{N}}(s)
\operatorname{sinc}\!\left(\frac{\pi s|\bm{k}|}{2L}\right)
\left[\int_{\mathcal{D}_L}\mathcal{N}^{c}(\bm{v})\psi(s,\bm{v})
e^{-i\pi\bm{k}\cdot\bm{v}/L}\,\dif\bm{v}\right]\,\dif s,
\end{equation}
where
\begin{equation}
\label{eq:effective_gain_convolution}
\psi(s,\bm{v})
=\frac{1}{(2L)^3}\int_{\mathcal{D}_L}
r(\bm{v}-\bm{w})e^{-|\bm{w}|^2/T}
\left[\sum_{\bm{\ell}}
\operatorname{sinc}\!\left(\frac{\pi s|\bm{\ell}|}{2L}\right)
e^{i\pi\bm{\ell}\cdot\bm{w}/L}\right]\,\dif\bm{w}.
\end{equation}
Here the sum uses the retained Fourier modes and $r$ is extended periodically on $\mathcal{D}_L$.
For each fixed $s$, $\psi(s,\cdot)$ is a convolution with a Gaussian-weighted kernel.
The bracketed sum and this convolution are evaluated by FFTs; multiplication by $\mathcal{N}^{c}$ and another FFT give the coefficients in \eqref{eq:effective_gain_modes}. Evaluating $r=(f-\mathcal{M}_q)/[\mathcal{M}_q(1+\tau\theta_0\mathcal{M}_q)]$ in the tails can amplify round-off errors. We therefore set $r=0$ where $\mathcal{M}_q$ falls below a prescribed cutoff before applying the FFT convolution.
We refer to \citet{yin2026fast} for the derivation and numerical details.

With $J=O(N_v)$ radial nodes, FFT evaluation of these convolutions costs $O(JN_v^3\log N_v)=O(N_v^4\log N_v)$ for the gain term; the loss term costs $O(N_v^3\log N_v)$ after precomputation.
The tabulated factor $\widetilde{\mathcal{A}}_{\mathcal{N}}$ changes only the radial weights.
Analytical angular integration removes the factor $M$ in Section~\ref{sec:FSM_Direct}.

\section{Application to the Steady-State Quantum Boltzmann Equation}
\label{sec:newton}
In this section, we use the effective linear operator \eqref{eq:augmented_L} in a modified Newton method for the steady-state quantum Boltzmann equation. Since the radial projection introduces an approximation error relative to the exact linearization \eqref{eq:linearized_collision}, the resulting operator is used only to approximate the Fr\'echet derivative of $\mathcal{Q}_q$ in the Newton correction equation. The nonlinear residual is still evaluated with the full quantum collision operator $\mathcal{Q}_q$.
\subsection{Steady-State Formulation and Approximate Jacobian}
We consider the one-dimensional steady-state quantum Boltzmann equation on $x_1\in(x_L,x_R)$, with $\bm{v}\in\mathbb{R}^3$:
\begin{equation}
    \label{eq:1D_Steady_QBE}
    v_1\pdfFrac{f}{x_1}=\frac{1}{\Kn}\mathcal{Q}_{q}(f).
\end{equation}
Writing $T_L,T_R$ for the thermodynamic wall temperatures and $\bm{u}_L,\bm{u}_R$ for the prescribed tangential velocities, the incoming distributions are
\begin{equation}
    \label{eq:Boundary_Cond_for_1D_QBE}
    \begin{aligned}
    f(x_L,\bm{v};\tau)
    &=\frac{1}{\theta_0}
      \frac{z_L}{\exp\left(\frac{|\bm{v}-\bm{u}_L|^2}{2T_L}\right)-\tau z_L},
      \qquad v_1>0,\\
    f(x_R,\bm{v};\tau)
    &=\frac{1}{\theta_0}
      \frac{z_R}{\exp\left(\frac{|\bm{v}-\bm{u}_R|^2}{2T_R}\right)-\tau z_R},
      \qquad v_1<0.
    \end{aligned}
\end{equation}
The wall fugacities $z_L,z_R$ are chosen to enforce zero normal mass flux at each boundary:
\begin{equation}
    \label{eq:zero_flux_BC}
    \int_{\mathbb{R}^3}v_1f(x_L,\bm{v};\tau)\dif\bm{v}
    =\int_{\mathbb{R}^3}v_1f(x_R,\bm{v};\tau)\dif\bm{v}=0.
\end{equation}

Define the steady-state residual by
\begin{equation}
    \label{eq:Residual_Functional}
    \mathcal{R}(f)\coloneqq v_1\pdfFrac{f}{x_1}
    -\frac{1}{\Kn}\mathcal{Q}_q(f),
\end{equation}
Newton's method determines the correction to the current iterate by linearizing the steady-state residual $\mathcal{R}(f)$. This linearization requires the Fr\'echet derivative of the nonlinear collision operator $\mathcal{Q}_q$, which describes its first-order response to perturbations of $f$.
For a perturbation $g$, let $\mathrm{D}_f\mathcal{Q}_q[f](g)$ denote the Fr\'echet derivative of the collision operator with respect to the physical distribution $f$, in the direction $g$.
The exact derivative of the residual is
\begin{equation}
    \label{eq:exact_Frechet_residual}
    \mathrm{D}_f\mathcal{R}[f](g)
    =\left.\frac{\dif}{\dif\varepsilon}
      \mathcal{R}(f+\varepsilon g)\right|_{\varepsilon=0}
    =v_1\pdfFrac{g}{x_1}
     -\frac{1}{\Kn}\mathrm{D}_f\mathcal{Q}_q[f](g).
\end{equation}

Applying $\mathrm{D}_f\mathcal{Q}_q[f]$ directly is expensive.
We first consider the special case $\mathrm{D}_f\mathcal{Q}_q[\mathcal{M}_q]$, where $\mathcal{M}_q$ is the local quantum equilibrium with the same mass, momentum, and energy as $f$.
We then apply the radial approximation of Section~\ref{sec:linear-approx} to this equilibrium linearization.
Set $\mathcal{N}=\mathcal{M}_q/(1+\tau\theta_0\mathcal{M}_q)$.
By the chain rule and \eqref{eq:linearized_collision}, the physical linearization and its effective approximation satisfy
\begin{equation}
    \label{eq:modified_Frechet_operator}
    \begin{aligned}
    \mathrm{D}_f\mathcal{Q}_q[\mathcal{M}_q](g)
    &=\mathcal{L}_q
    \left(\frac{g}{\mathcal{M}_q(1+\tau\theta_0\mathcal{M}_q)}\right),\quad\widetilde{\mathcal{L}}_q[g]
    \coloneqq\widetilde{\mathcal{L}}_q
    \left(\frac{g}{\mathcal{M}_q(1+\tau\theta_0\mathcal{M}_q)}\right)
    \approx\mathrm{D}_f\mathcal{Q}_q[\mathcal{M}_q](g).
    \end{aligned}
\end{equation}
Here $\widetilde{\mathcal{L}}_q$ on the right-hand side is constructed with the reference equilibrium $\mathcal{N}$.
Square brackets in $\widetilde{\mathcal{L}}_q[g]$ indicate the physical perturbation as input, whereas parentheses in $\widetilde{\mathcal{L}}_q(r)$ indicate the weighted perturbation $r=g/\bigl(\mathcal{M}_q(1+\tau\theta_0\mathcal{M}_q)\bigr)$.
For a given outer Newton iterate, the local equilibrium and collision coefficients are held fixed throughout the inner fixed-point iteration.

\subsection{Modified Newton Algorithm}
The Newton correction associated with the full step $f^{(k)}-g^{(k)}$ satisfies
\begin{equation}
    \label{eq:Newton}
    v_1\partial_{x_1}(g^{(k)}-f^{(k)})
    =\frac{1}{\Kn}\left(
      \mathrm{D}_f\mathcal{Q}_q[f^{(k)}](g^{(k)})
      -\mathcal{Q}_q(f^{(k)})\right).
\end{equation}
For each outer Newton iterate $f^{(k)}$, let $\mathcal{M}_q^{(k)}$ denote the local equilibrium determined by its moments, with transform $\mathcal{N}^{(k)}$ and corresponding effective linear operator $\widetilde{\mathcal{L}}_q^{(k)}$.
 Replacing $\mathrm{D}_f\mathcal{Q}_q[f^{(k)}]$ by the effective operator in \eqref{eq:modified_Frechet_operator} gives
\begin{equation}
    \label{eq:Modified_Newton}
    v_1\partial_{x_1}(g^{(k)}-f^{(k)})
    =\frac{1}{\Kn}\left(
      \widetilde{\mathcal{L}}_q^{(k)}[g^{(k)}]
      -\mathcal{Q}_q(f^{(k)})\right).
\end{equation}
Following \citet{yin2026fast}, we solve \eqref{eq:Modified_Newton} by the inner fixed-point iteration, initialized with $g^{(k,0)}=0$:
\begin{equation}
    \label{eq:Newton_Iter}
    \begin{aligned}
    v_1\partial_{x_1}(g^{(k,l+1)}-f^{(k)})
     +\frac{\nu(x_1)}{\Kn}g^{(k,l+1)}=\frac{1}{\Kn}\left(
      \widetilde{\mathcal{L}}_q^{(k)}[g^{(k,l)}]
      +\nu(x_1)g^{(k,l)}-\mathcal{Q}_q(f^{(k)})\right).
    \end{aligned}
\end{equation}
Here $l$ indexes the inner iterations, and $\nu(x_1)>0$ is a stabilization frequency fixed for each outer Newton iterate.
A practical choice, used by \citet[Supplementary Material, Section~SM2]{yin2026fast} for the classical Boltzmann equation, is $\nu(x_1)=\rho^{(k)}(x_1)$, where $\rho^{(k)}$ is the local density of $f^{(k)}$.
The added term treats a damping contribution implicitly and cancels at a fixed point, leaving \eqref{eq:Modified_Newton} unchanged.

We discretize \eqref{eq:Newton_Iter} by a finite volume method with first-order upwind fluxes.
Divide $[x_L,x_R]$ into $N_x$ uniform cells
$C_i=[x_{i-\frac12},x_{i+\frac12}]$ of width
$\Delta x=(x_R-x_L)/N_x$, with centres
$x_i=x_L+(i-\frac12)\Delta x$.
The index $i=1,\ldots,N_x$ labels spatial cells, while $j$ labels the nodes
$\bm{v}_j=(v_{1,j},v_{2,j},v_{3,j})$ of the three-dimensional velocity grid.
The discrete unknowns represent cell averages,
\begin{equation}
    \label{eq:fv_cell_average}
    f_{i,j}^{(k)}
    =\frac{1}{\Delta x}\int_{C_i}
      f^{(k)}(x_1,\bm{v}_j)\,\dif x_1,
\end{equation}
with $g_{i,j}^{(k,l)}$ defined analogously.
Collision operators are evaluated separately in each cell using its full velocity distribution; the subscript $(i,j)$ denotes their value at $\bm{v}_j$ in cell $i$.
Writing $\nu_i=\nu(x_i)$, we obtain, for $v_{1,j}>0$,
\begin{equation}
    \label{eq:upwind}
    \begin{aligned}
    &v_{1,j}
      \frac{g_{i,j}^{(k,l+1)}-g_{i-1,j}^{(k,l+1)}}{\Delta x}
     -v_{1,j}
      \frac{f_{i,j}^{(k)}-f_{i-1,j}^{(k)}}{\Delta x}
     +\frac{\nu_i}{\Kn}g_{i,j}^{(k,l+1)}=\frac{1}{\Kn}\left[
      \left(\widetilde{\mathcal{L}}_q^{(k)}[g^{(k,l)}]\right)_{i,j}
      +\nu_i g_{i,j}^{(k,l)}
      -\mathcal{Q}_q(f^{(k)})_{i,j}\right].
    \end{aligned}
\end{equation}
The sweep proceeds from $i=1$ to $N_x$.
The indices $i=0$ and $i=N_x+1$ are reserved for the left and right boundary traces.
At the left wall, the incoming correction is set by
\begin{equation}
    \label{eq:Left_BC}
    f_{0,j}^{(k)}-g_{0,j}^{(k,l+1)}
    =\frac{1}{\theta_0}
      \frac{z_L}{\exp\!\left(\frac{|\bm{v}_j-\bm{u}_L|^2}{2T_L}\right)-\tau z_L},
      \qquad v_{1,j}>0,
\end{equation}
where $z_L$ is the wall fugacity used for the current sweep.
For $v_{1,j}<0$, use forward differences and sweep $i=N_x,\ldots,1$, with the right-wall analogue of \eqref{eq:Left_BC}.
When $v_{1,j}=0$, the transport term vanishes and the update is local in space.
Once all interior values are available, we recompute the wall fugacities and incoming traces from the candidate $f^{(k)}-g^{(k,l+1)}$ to satisfy the discrete zero-flux conditions before evaluating the residual.
Each sweep therefore solves linear transport equations with fixed collision coefficients and boundary data, while the inner iteration also includes the nonlinear wall-fugacity update.

We monitor the discrete residual of \eqref{eq:Modified_Newton}.
For $v_{1,j}>0$, its cellwise expression is
\begin{equation}
    \label{eq:inner_residual}
    \begin{aligned}
    e_{i,j}^{(k,l)}
    &=\frac{v_{1,j}}{\Delta x}
      \left(g_{i,j}^{(k,l)}-g_{i-1,j}^{(k,l)}
      -f_{i,j}^{(k)}+f_{i-1,j}^{(k)}\right)-\frac{1}{\Kn}\left[
      \left(\widetilde{\mathcal{L}}_q^{(k)}[g^{(k,l)}]\right)_{i,j}
      -\mathcal{Q}_q(f^{(k)})_{i,j}\right],\\
    \mathcal{E}_{k,l}
    &=\left(\Delta x(\Delta v)^3
      \sum_{i=1}^{N_x}\sum_j
      \left|e_{i,j}^{(k,l)}\right|^2\right)^{1/2}.
    \end{aligned}
\end{equation}
For $v_{1,j}<0$, use forward differences in the transport term; for $v_{1,j}=0$, this term vanishes.
The norm includes all velocity nodes and interior cells, with boundary traces entering through the difference quotients.
For an absolute tolerance $\varepsilon_{\mathrm{abs}}>0$ and a relative tolerance $0<\varepsilon_{\mathrm{rel}}<1$, we stop when $\mathcal{E}_{k,l+1}\le\max\left\{\varepsilon_{\mathrm{abs}},\varepsilon_{\mathrm{rel}}\mathcal{E}_{k,0}\right\},$ and set $g^{(k)}=g^{(k,l+1)}$.
\subsection{Boundary Treatment and Discrete Constraints}
After each inner sweep, we update $z_L$ and $z_R$ from the outgoing traces of the candidate $f^{(k)}-g^{(k,l+1)}$.
With the uniform-grid velocity weights $w_j^v=(\Delta v)^3$, the discrete zero-flux conditions read
\begin{equation}
    \label{eq:BCs}
  \begin{aligned}
    &\sum_{v_{1,j}>0} w_j^v v_{1,j}
      \frac{1}{\theta_0}
      \frac{z_L}{\exp\!\left(\frac{|\bm v_j-\bm u_L|^2}{2T_L}\right)-\tau z_L}
      +\sum_{v_{1,j}<0} w_j^v v_{1,j}
      \left(f_{1,j}^{(k)}-g_{1,j}^{(k,l+1)}\right)=0,\\
    &\sum_{v_{1,j}>0} w_j^v v_{1,j}
      \left(f_{N_x,j}^{(k)}-g_{N_x,j}^{(k,l+1)}\right)
      +\sum_{v_{1,j}<0} w_j^v v_{1,j}
      \frac{1}{\theta_0}
      \frac{z_R}{\exp\!\left(\frac{|\bm v_j-\bm u_R|^2}{2T_R}\right)-\tau z_R}=0.
  \end{aligned}
\end{equation}
For Couette and Fourier flow with prescribed wall velocities $\bm{u}_L,\bm{u}_R$ and kinetic temperatures $\theta_L,\theta_R$, the thermodynamic wall temperatures satisfy $T_s=\theta_s\,\operatorname{Li}_{\frac{3}{2}}(\tau z_s)/\operatorname{Li}_{\frac{5}{2}}(\tau z_s)$, $s=L,R$, by \eqref{eq:fugality_and_temperature}.
Substituting these relations into \eqref{eq:BCs} gives a scalar nonlinear equation for each wall fugacity.

The continuous collision operators satisfy \eqref{eq:conservation_Qq} and \eqref{eq:Conservation}, but their spectral discretizations need not preserve all five moments exactly.
Fix a spatial cell and an outer iterate, and suppress these indices.
Set $\bm{\phi}_j=(1,v_{1,j},v_{2,j},v_{3,j},|\bm{v}_j|^2)^{\mathsf T}$ and $\chi_j=\mathcal{M}_{q,j}(1+\tau\theta_0\mathcal{M}_{q,j})$.
For either collision output $q_j$, constrained least squares with weights $w_j^v/\chi_j$ gives
\begin{equation}
    \label{eq:collision_moment_correction}
    \left(\sum_j w_j^v\chi_j
      \bm{\phi}_j\bm{\phi}_j^{\mathsf T}\right)\bm{a}
    =\sum_j w_j^v\bm{\phi}_j q_j,
    \qquad
    q_j^{\mathrm c}=q_j-\chi_j\bm{\phi}_j^{\mathsf T}\bm{a}.
\end{equation}
This subtracts a component in the equilibrium tangent space and gives $\sum_j w_j^v\bm{\phi}_j q_j^{\mathrm c}=\bm{0}$, up to roundoff.
In the updates and residuals, we apply this correction to the outputs of both $\mathcal{Q}_q$ and $\widetilde{\mathcal{L}}_q$, enforcing discrete conservation of mass, momentum, and energy for both operators.

For the classical Boltzmann equation, \citet{filbet2015steady} preserve the Maxwellian equilibrium through a micro--macro decomposition before spectral discretization.
By Theorem~\ref{thm:effective_kernel}, the collision invariants span the kernel of the continuous effective operator.
The spectral discretization need not annihilate these modes exactly, and zero output moments alone do not imply a zero collision response.
We therefore remove the collision-invariant component of the weighted input $r_j=g_j/\chi_j$ before each effective linear evaluation.
Using the reference equilibrium $\mathcal{N}_j=\mathcal{M}_{q,j}/(1+\tau\theta_0\mathcal{M}_{q,j})$, we compute
\begin{equation}
    \label{eq:linearized_kernel_projection}
    \left(\sum_j w_j^v\mathcal{N}_j
      \bm{\phi}_j\bm{\phi}_j^{\mathsf T}\right)\bm{b}
    =\sum_j w_j^v\mathcal{N}_j\bm{\phi}_j r_j,
    \qquad
    r_j^\perp=r_j-\bm{\phi}_j^{\mathsf T}\bm{b}.
\end{equation}
We evaluate the spectral operator $\widetilde{\mathcal{L}}_q(r^\perp)$ and apply \eqref{eq:collision_moment_correction} to its output.
The input projection ensures that the five equilibrium tangent directions are annihilated by the corrected discrete linear operator, up to roundoff.

% The corrected $\mathcal{Q}_q(f^{(k)})$ is held fixed during the inner iteration, whereas the input projection and output correction are applied at every effective linear evaluation.
% These operations are internal to the collision evaluations and impose no zero-moment constraint on $g^{(k,l)}$ itself.

For prescribed total mass $C>0$, we update the interior cells by
\begin{equation}
    \label{eq:outer_mass_normalization}
    \widetilde{f}_{i,j}
    =f_{i,j}^{(k)}-g_{i,j}^{(k)},
    \qquad
    f_{i,j}^{(k+1)}
    =\frac{C\,\widetilde{f}_{i,j}}
      {\Delta x(\Delta v)^3
       \sum_{m=1}^{N_x}\sum_n\widetilde{f}_{m,n}},
    \qquad i=1,\ldots,N_x.
\end{equation}
After normalization, we reset the wall fugacities and incoming traces by \eqref{eq:BCs} and \eqref{eq:Boundary_Cond_for_1D_QBE}.
To decide whether another outer step is needed, we evaluate the nonlinear residual at the updated distribution.
The iteration terminates when its norm does not exceed a prescribed tolerance $\varepsilon_{\mathrm{out}}>0$.

Algorithm~\ref{alg:modified_newton} summarizes the method.
All collision evaluations, including those in the residuals, use the corrections described above.
\begin{algorithm}[H]
\caption{Modified Newton method for the steady-state quantum Boltzmann equation}
\label{alg:modified_newton}
\small
\begin{algorithmic}[1]
\Require Initial interior distribution $f^{(0)}$ of total mass $C$, wall data, and tolerances $\varepsilon_{\mathrm{out}},\varepsilon_{\mathrm{abs}},\varepsilon_{\mathrm{rel}}$.
\State Apply the zero-flux construction \eqref{eq:BCs} to $f^{(0)}$ and set its incoming traces by \eqref{eq:Boundary_Cond_for_1D_QBE}.
\For{$k=0,1,\ldots$}
    \State Match $\mathcal{M}_q^{(k)}$ to the moments of $f^{(k)}$; construct and freeze $\mathcal{N}^{(k)}$, $\widetilde{\mathcal{L}}_q^{(k)}$, and $\nu_i$.
    \State Evaluate $\mathcal{Q}_q(f^{(k)})$ and correct its moments by \eqref{eq:collision_moment_correction}.
    \State Set $g^{(k,0)}\gets0$, $l\gets0$, and compute $\mathcal{E}_{k,0}$ by \eqref{eq:inner_residual}.
    \If{$\mathcal{E}_{k,0}\le\varepsilon_{\mathrm{out}}$}
        \State \Return $f^{(k)}$.
    \EndIf
    \Repeat
        \State Evaluate $\widetilde{\mathcal{L}}_q^{(k)}[g^{(k,l)}]$ with input projection \eqref{eq:linearized_kernel_projection} and output correction \eqref{eq:collision_moment_correction}.
        \State Set the incoming correction traces by \eqref{eq:Left_BC} and its right-wall analogue.
        \State Compute $g^{(k,l+1)}$ by the directional sweeps \eqref{eq:upwind}, including the local update for $v_{1,j}=0$.
        \State Update the wall fugacities from $f^{(k)}-g^{(k,l+1)}$ by \eqref{eq:BCs} and synchronize the incoming correction traces.
        \State Set $l\gets l+1$ and compute $\mathcal{E}_{k,l}$ by \eqref{eq:inner_residual}.
    \Until{$\mathcal{E}_{k,l}\le\max\{\varepsilon_{\mathrm{abs}},\varepsilon_{\mathrm{rel}}\mathcal{E}_{k,0}\}$}
    \State Set $g^{(k)}\gets g^{(k,l)}$ and form $\widetilde{f}\gets f^{(k)}-g^{(k)}$ in the interior cells.
    \State Normalize $\widetilde{f}$ to total mass $C$ by \eqref{eq:outer_mass_normalization} to obtain $f^{(k+1)}$.
    \State Apply \eqref{eq:BCs} to $f^{(k+1)}$ and reset its incoming traces by \eqref{eq:Boundary_Cond_for_1D_QBE}.
\EndFor
\end{algorithmic}
\end{algorithm}

\section{Numerical Test}\label{sec:numerical}

In this section, we first validate the effective linear operator $\Lq$ constructed in Section~\ref{sec:linear-approx}. We then compare the corresponding linear model with the full quantum Boltzmann equation and the quantum BGK model for spatially homogeneous relaxation. Finally, we apply the modified Newton solver to steady one-dimensional Couette and Fourier flows. Both Bose ($\tau=+1$) and Fermi ($\tau=-1$) gases are considered.

Unless otherwise stated, we truncate the velocity domain to $\mathcal D_L=[-L,L]^3$ and periodize the distribution function on $\mathcal D_L$. Following the de-aliasing criterion of~\cite{yin2026fast}, we choose the relative-velocity cutoff as
\begin{equation}
    R=\frac{4L}{3+\sqrt2},
    \label{eq:Rcut}
\end{equation}
so that the support of the collision integral is contained in the periodized box. We use a uniform velocity grid with $N_v$ points in each direction, an $M$-node Lebedev quadrature on $\mathbb S^2$~\citep{LebedevLaikov1999}, and $J=N_v$ radial quadrature nodes.

Throughout this section, $\mathcal Q_q$ denotes the full discrete quantum collision operator evaluated by the fast spectral method of~\cite{hu2012fast,wu2019fast} (\texttt{FullQBE}), and $\Lq$ denotes the effective linear operator. The quantum equilibrium
\begin{equation}
    \mathcal M_q(\boldsymbol v)
    =\frac{1}{\theta_0}\,
    \frac{1}{z^{-1}\exp\!\big(|\boldsymbol v-\boldsymbol u|^2/(2T)\big)-\tau}
    \label{eq:Mq}
\end{equation}
is determined by matching the moments of the input distribution.

\FloatBarrier
\subsection{Operator-Level Validation}
\label{subsec:operator-validation}
We first compare the effective operator $\widetilde{\mathcal{L}}_q$ with the exact linearization $\mathrm{D}_f\mathcal{Q}_q[\mathcal{M}_q]$ at quantum equilibrium.
We consider two values of the rescaled Planck constant: $\hbar=0.3$, for which $\theta_0=\hbar^3=0.027$ and the gas is close to the classical regime, and $\hbar=1.8$, for which $\theta_0=5.832$ and quantum effects are significant.

For each input $f_0$, set $g=f_0-\mathcal{M}_q$ and
$r=g/\bigl(\mathcal{M}_q(1+\tau\theta_0\mathcal{M}_q)\bigr)$.
Following \eqref{eq:modified_Frechet_operator}, we use the first variation
\begin{equation}
    \mathcal{L}_q(r)(\bm{v})
    =
    \mathrm{D}_f\mathcal{Q}_q[\mathcal{M}_q](g)(\bm{v}),
    \label{eq:dF}
\end{equation}
as the reference response, computed from the full quantum collision operator in the physical direction $g$.
The effective response is $\widetilde{\mathcal{L}}_q(r)=\widetilde{\mathcal{L}}_q[g]$.
We compare these responses using the discrete $L^2$ metrics
\begin{equation}
    \begin{aligned}
    \mathcal{E}_{\mathrm{abs}}
    =\big\|\mathcal{L}_q(r)-\widetilde{\mathcal{L}}_q(r)\big\|_2,\quad
    \mathcal{E}_{\mathrm{rel}}=\frac{\mathcal{E}_{\mathrm{abs}}}{\|\mathcal{L}_q(r)\|_2},\quad\cos\alpha
    =\frac{\langle \mathcal{L}_q(r),\widetilde{\mathcal{L}}_q(r)\rangle_2}{\|\mathcal{L}_q(r)\|_2\,\|\widetilde{\mathcal{L}}_q(r)\|_2}.
    \end{aligned}
    \label{eq:operator-metrics}
\end{equation}
We use $\cos\alpha$ to distinguish an overall scaling error from a change in the velocity dependence of the collision response.
For nonzero responses with $\cos\alpha>0$,
\begin{equation*}
    \min_{c>0}
    \frac{\|\mathcal{L}_q(r)-c\widetilde{\mathcal{L}}_q(r)\|_2}
         {\|\mathcal{L}_q(r)\|_2}
    =\sqrt{1-\cos^2\alpha}.
\end{equation*}
Thus the angle measures the relative error remaining after optimal positive rescaling for the chosen perturbation.
A cosine close to one suggests that rescaling may improve the approximation, but does not establish a common factor for different perturbations.
For both inputs below, we use $L=5$ and the velocity/angular resolutions $(N_v,M)=(24,26)$, $(32,38)$ and $(48,50)$.

\paragraph{Far-from-equilibrium input}
We first consider the bimodal input
\begin{equation}
    f_0(\bm{v})=\frac{\rho_0}{2(2\pi T_0)^{3/2}}
    \left(
    \exp\!\left(-\frac{|\bm{v}-\bm{u}_0|^2}{2T_0}\right)
    +
    \exp\!\left(-\frac{|\bm{v}+\bm{u}_0|^2}{2T_0}\right)
    \right),
    \label{eq:bimodal-input}
\end{equation}
with $\rho_0=1$, $T_0=3/8$ and $\bm{u}_0=(1,\,0.5,\,0)$, a standard far-from-equilibrium configuration in the quantum Boltzmann literature~\citep{hu2012fast}.
Both responses are evaluated at the equilibrium $\mathcal{M}_q$ with the same moments as $f_0$.
Table~\ref{tab:far-equilibrium-operator-comparison} reports the results.

\begin{table}[pos=!htbp]
\centering
\caption{Comparison of the exact and effective responses $\mathcal{L}_q(r)$ and $\widetilde{\mathcal{L}}_q(r)$ for the bimodal input \eqref{eq:bimodal-input}.}
\label{tab:far-equilibrium-operator-comparison}
\footnotesize
\begin{tabular}{@{}ccccccc@{}}
\toprule
$(N_v,M)$ & Parameters & $\|\mathcal{L}_q(r)\|_2$ & $\|\widetilde{\mathcal{L}}_q(r)\|_2$ & $\mathcal{E}_{\mathrm{abs}}$ & $\mathcal{E}_{\mathrm{rel}}$ & $\cos\alpha$ \\
\midrule
$(24,26)$
 & $\hbar=0.30,\ \tau=+1$ & $6.81\times10^{-1}$ & $6.76\times10^{-1}$ & $3.09\times10^{-2}$ & $4.54\%$ & $0.998986$ \\
 & $\hbar=0.30,\ \tau=-1$ & $6.82\times10^{-1}$ & $6.77\times10^{-1}$ & $3.11\times10^{-2}$ & $4.56\%$ & $0.998975$ \\
 & $\hbar=1.80,\ \tau=+1$ & $6.70\times10^{-1}$ & $6.32\times10^{-1}$ & $7.74\times10^{-2}$ & $11.54\%$ & $0.994710$ \\
 & $\hbar=1.80,\ \tau=-1$ & $1.04\times10^{0}$  & $1.11\times10^{0}$  & $1.63\times10^{-1}$ & $15.71\%$ & $0.990259$ \\
\midrule
$(32,38)$
 & $\hbar=0.30,\ \tau=+1$ & $6.76\times10^{-1}$ & $6.76\times10^{-1}$ & $5.81\times10^{-3}$ & $0.86\%$ & $0.999963$ \\
 & $\hbar=0.30,\ \tau=-1$ & $6.76\times10^{-1}$ & $6.77\times10^{-1}$ & $5.88\times10^{-3}$ & $0.87\%$ & $0.999963$ \\
 & $\hbar=1.80,\ \tau=+1$ & $6.67\times10^{-1}$ & $6.32\times10^{-1}$ & $7.42\times10^{-2}$ & $11.12\%$ & $0.994992$ \\
 & $\hbar=1.80,\ \tau=-1$ & $1.02\times10^{0}$  & $1.11\times10^{0}$  & $1.28\times10^{-1}$ & $12.51\%$ & $0.996023$ \\
\midrule
$(48,50)$
 & $\hbar=0.30,\ \tau=+1$ & $6.77\times10^{-1}$ & $6.76\times10^{-1}$ & $2.37\times10^{-3}$ & $0.35\%$ & $0.999994$ \\
 & $\hbar=0.30,\ \tau=-1$ & $6.77\times10^{-1}$ & $6.77\times10^{-1}$ & $2.37\times10^{-3}$ & $0.35\%$ & $0.999994$ \\
 & $\hbar=1.80,\ \tau=+1$ & $6.68\times10^{-1}$ & $6.32\times10^{-1}$ & $7.41\times10^{-2}$ & $11.10\%$ & $0.995045$ \\
 & $\hbar=1.80,\ \tau=-1$ & $1.03\times10^{0}$  & $1.11\times10^{0}$  & $1.17\times10^{-1}$ & $11.35\%$ & $0.996462$ \\
\bottomrule
\end{tabular}
\end{table}
\FloatBarrier

\paragraph{Near-equilibrium input}
We next consider a small perturbation of equilibrium, a regime relevant to the linear solves in the modified Newton method.
For each $(\hbar,\tau)$, we take $\mathcal{M}_q$ to be the equilibrium of the bimodal input \eqref{eq:bimodal-input}, with $\rho=1$, $\bm{u}=\bm{0}$ and $\theta=19/24$.
The parameters $(z,T)$ are determined by \eqref{eq:fugality_and_temperature}.
We set
\begin{equation}
    f_0(\bm{v})=\mathcal{M}_q(\bm{v})\,
    \big(1+\varepsilon\,\varphi(\bm{v})\big),\qquad
    \varphi(\bm{v})=\frac{(v_1-u_1)^2-(v_2-u_2)^2}{T+|\bm{v}-\bm{u}|^2},
    \qquad \varepsilon=0.05.
    \label{eq:near-equilibrium-input}
\end{equation}
This perturbation leaves the mass, momentum and energy unchanged, so the moment-matched equilibrium remains $\mathcal{M}_q$.
Table~\ref{tab:near-equilibrium-operator-comparison} reports the results.

\begin{table}[pos=!htbp]
\centering
\caption{Comparison of the exact and effective responses for the near-equilibrium input \eqref{eq:near-equilibrium-input}, with $\varepsilon=0.05$.}
\label{tab:near-equilibrium-operator-comparison}
\footnotesize
\begin{tabular}{@{}ccccccc@{}}
\toprule
$(N_v,M)$ & Parameters & $\|\mathcal{L}_q(r)\|_2$ & $\|\widetilde{\mathcal{L}}_q(r)\|_2$ & $\mathcal{E}_{\mathrm{abs}}$ & $\mathcal{E}_{\mathrm{rel}}$ & $\cos\alpha$ \\
\midrule
$(24,26)$
 & $\hbar=0.30,\ \tau=+1$ & $1.08\times10^{-2}$ & $1.08\times10^{-2}$ & $2.41\times10^{-4}$ & $2.22\%$ & $0.999753$ \\
 & $\hbar=0.30,\ \tau=-1$ & $1.08\times10^{-2}$ & $1.08\times10^{-2}$ & $2.41\times10^{-4}$ & $2.22\%$ & $0.999753$ \\
 & $\hbar=1.80,\ \tau=+1$ & $1.16\times10^{-2}$ & $1.06\times10^{-2}$ & $1.47\times10^{-3}$ & $12.63\%$ & $0.995753$ \\
 & $\hbar=1.80,\ \tau=-1$ & $9.99\times10^{-3}$ & $1.13\times10^{-2}$ & $1.61\times10^{-3}$ & $16.15\%$ & $0.995686$ \\
\midrule
$(32,38)$
 & $\hbar=0.30,\ \tau=+1$ & $1.08\times10^{-2}$ & $1.08\times10^{-2}$ & $5.73\times10^{-5}$ & $0.53\%$ & $0.999986$ \\
 & $\hbar=0.30,\ \tau=-1$ & $1.08\times10^{-2}$ & $1.08\times10^{-2}$ & $5.67\times10^{-5}$ & $0.52\%$ & $0.999986$ \\
 & $\hbar=1.80,\ \tau=+1$ & $1.16\times10^{-2}$ & $1.06\times10^{-2}$ & $1.44\times10^{-3}$ & $12.40\%$ & $0.996001$ \\
 & $\hbar=1.80,\ \tau=-1$ & $9.98\times10^{-3}$ & $1.13\times10^{-2}$ & $1.59\times10^{-3}$ & $15.93\%$ & $0.996058$ \\
\midrule
$(48,50)$
 & $\hbar=0.30,\ \tau=+1$ & $1.08\times10^{-2}$ & $1.08\times10^{-2}$ & $2.12\times10^{-5}$ & $0.20\%$ & $0.999998$ \\
 & $\hbar=0.30,\ \tau=-1$ & $1.08\times10^{-2}$ & $1.08\times10^{-2}$ & $1.94\times10^{-5}$ & $0.18\%$ & $0.999998$ \\
 & $\hbar=1.80,\ \tau=+1$ & $1.16\times10^{-2}$ & $1.06\times10^{-2}$ & $1.44\times10^{-3}$ & $12.39\%$ & $0.996010$ \\
 & $\hbar=1.80,\ \tau=-1$ & $9.98\times10^{-3}$ & $1.13\times10^{-2}$ & $1.59\times10^{-3}$ & $15.92\%$ & $0.996075$ \\
\bottomrule
\end{tabular}
\end{table}
\FloatBarrier

Tables~\ref{tab:far-equilibrium-operator-comparison} and~\ref{tab:near-equilibrium-operator-comparison} show two distinct error regimes.
For $\hbar=0.3$, the relative error decreases under refinement, and the Bose and Fermi results are nearly identical.
This behavior is consistent with the classical limit described in Section~\ref{sec:linear-approx} and the classical linearized operator of~\cite{yin2026fast}.
For $\hbar=1.8$, refinement produces much smaller changes, and the errors remain appreciable on the finest grid.
This suggests that the radial approximation is the main source of the remaining discrepancy.
The approximation preserves the conservation and linearized entropy structure established in Section~\ref{sec:linear-approx}, but does not reproduce the exact collision response.
The direction cosine exceeds $0.99$ in every case, indicating strong directional agreement despite the differences in response magnitude.

In the modified Newton method of Section~\ref{sec:newton}, the nonlinear residual is evaluated with the full collision operator, while $\widetilde{\mathcal{L}}_q$ is used only in the correction equation.
The radial approximation therefore affects convergence but does not alter the steady-state equation satisfied when the full nonlinear residual vanishes.

\paragraph{Efficiency}
We next compare the cost of evaluating the full collision operator \texttt{FullQBE} and the effective operator $\widetilde{\mathcal{L}}_q$.
Table~\ref{tab:full-linearized-qbe-time-cost} reports the wall-clock time per evaluation, with $J=N_v$ for both methods.
Figure~\ref{fig:linearized-qbe-scaling} shows the measured cost of $\widetilde{\mathcal{L}}_q$ for $N_v$ from $24$ to $128$, together with the reference curve $CN_v^4\log N_v$.

\begin{table}[pos=!htbp]
\centering
\caption{Wall-clock time per collision evaluation. Both methods use $J=N_v$; \texttt{FullQBE} uses $M\approx N_v$. For $N_v=32,48,64$, the angular resolutions are $M=38,50,74$, respectively, as in Tables~\ref{tab:far-equilibrium-operator-comparison} and~\ref{tab:near-equilibrium-operator-comparison}. Timings for the effective operator are averaged over $10$ calls at each resolution; \texttt{FullQBE} uses $5$ calls at $N_v=32$ and one call at $N_v=48,64$. For each method, $t_{N_v}$ denotes the mean wall time, and the ratios $t_{48}/t_{32}$ and $t_{64}/t_{48}$ measure its growth under refinement.}
\label{tab:full-linearized-qbe-time-cost}
\footnotesize
\begin{tabular}{@{}cccccc@{}}
\toprule
Method & $N_v=32$ (s) & ratio & $N_v=48$ (s) & ratio & $N_v=64$ (s) \\
\midrule
\texttt{FullQBE} total time    & $48.83$ & $6.02$ & $294.14$ & $4.92$ & $1445.95$ \\
$\widetilde{\mathcal{L}}_q$ time   & $0.29$  & $3.00$ & $0.87$   & $2.61$ & $2.27$ \\
Speedup               & $169.24$ &       & $338.09$ &        & $638.37$ \\
\bottomrule
\end{tabular}
\end{table}
\FloatBarrier

Over the tested range, the measured growth is consistent with the $O(N_v^4\log N_v)$ complexity derived in Section~\ref{sec:linear-approx}.
The $O(MN_v^4\log N_v)$ cost of \texttt{FullQBE}~\citep{hu2012fast} includes an additional angular quadrature factor.
With $M\approx N_v$, this factor increases the cost difference under refinement.
The measured speedup rises from about $169$ at $N_v=32$ to about $638$ at $N_v=64$, giving a reduction of two to three orders of magnitude in the cost per evaluation.

\begin{figure}[pos=!htbp]
    \centering
    % Relabel the raster figure's wall-time symbol in TeX; preserve all plotted data.
    \setlength{\unitlength}{0.0062\linewidth}
    \begin{picture}(100,70.67)
        \put(0,0){\includegraphics[width=100\unitlength]{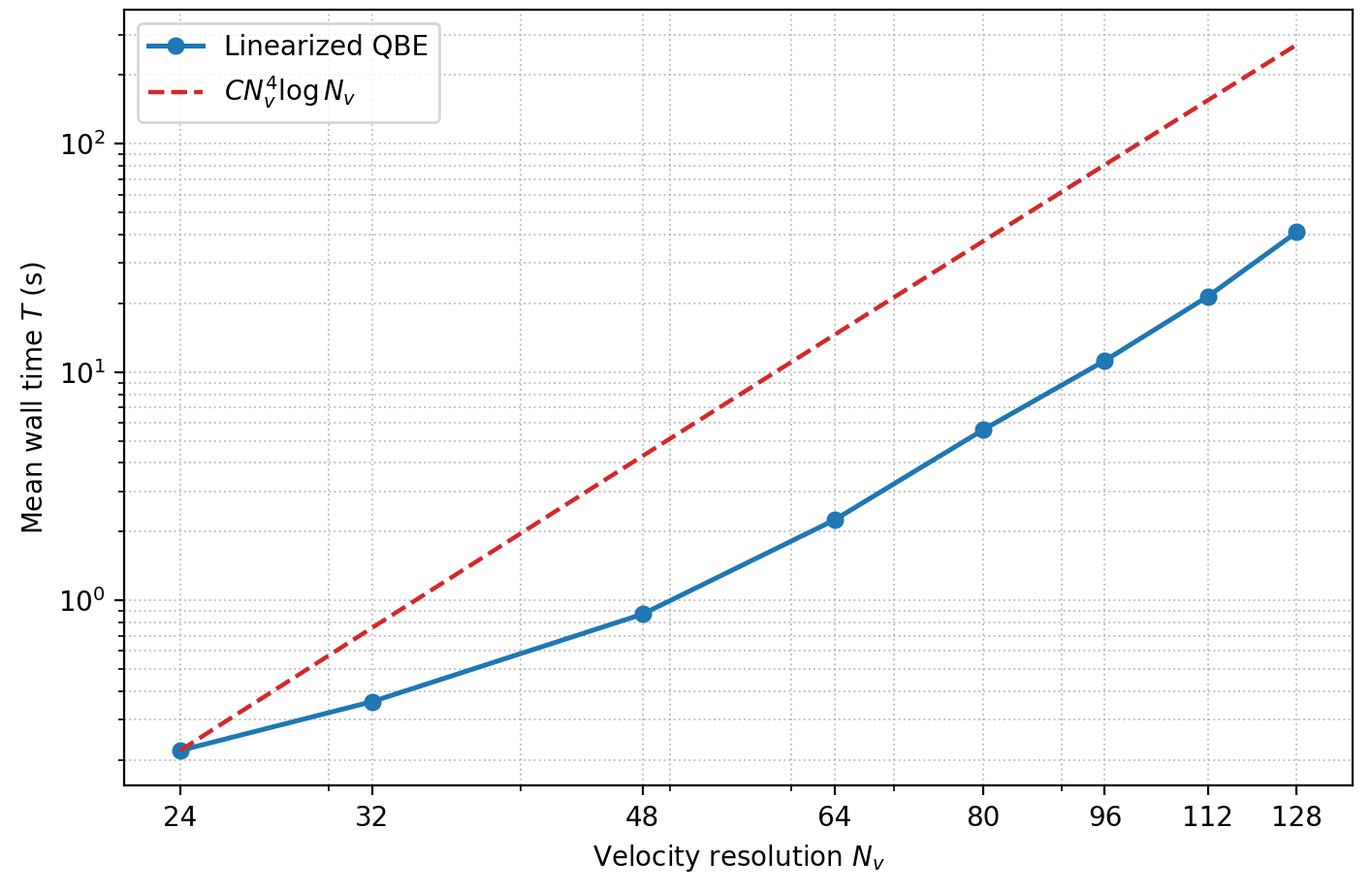}}
        \put(0,21.2){\color{white}\rule{4\unitlength}{28.4\unitlength}}
        \put(2,35.3){\makebox(0,0){\rotatebox{90}{\footnotesize Mean wall time $t$ (s)}}}
    \end{picture}
    \caption{Wall-clock time per evaluation of $\widetilde{\mathcal{L}}_q$ for $J=N_v$, compared with the reference curve $CN_v^4\log N_v$.}
    \label{fig:linearized-qbe-scaling}
\end{figure}
\FloatBarrier

\subsection{Spatially Homogeneous QBE}
\label{sec:spatial_homo}
We next compare the effective linear model with the full quantum Boltzmann equation and the quantum BGK model for homogeneous relaxation.
We solve
\begin{equation}
    \partial_t f(t,\bm v)=\mathcal C(f)(t,\bm v),
    \qquad \bm v\in\mathbb R^3,
    \label{eq:homogeneous-problem}
\end{equation}
with
\begin{equation*}
    \mathcal C(f)=
    \begin{cases}
        \mathcal Q_q(f), & \text{full quantum Boltzmann model},\\[1mm]
        \Lq[f-\mathcal M_q], & \text{effective linear model},\\[1mm]
        \nu(\mathcal M_q-f), & \text{quantum BGK model}.
    \end{cases}
\end{equation*}
The square-bracket notation for $\Lq$ is defined in \eqref{eq:modified_Frechet_operator}.
For the BGK model~\citep{filbet2010numerical}, we fix $\nu=\max\{10^{-3},\rho_{\mathrm d,0}\}$, where
$\rho_{\mathrm d,0}=(\Delta v)^3\sum_j f_0(\bm v_j)$ is the discrete initial density and $\Delta v=2L/N_v$.

Both initial data below satisfy
\begin{equation}
    \int_{\mathbb R^3}f_0(\bm v)\,\dif\bm v=1,\qquad
    \int_{\mathbb R^3}\bm v f_0(\bm v)\,\dif\bm v=\bm 0,\qquad
    \int_{\mathbb R^3}|\bm v|^2f_0(\bm v)\,\dif\bm v=3.
    \label{eq:homogeneous-moments}
\end{equation}
For each statistic, the continuous models therefore share the same conserved moments and equilibrium \eqref{eq:Mq}.
In the computations, equilibrium parameters are recovered from discrete moments.
The equilibrium in the linear model is fixed by the initial data; in the BGK model, it is recomputed and corrected to match the moments at each Runge--Kutta stage.

We use $R=6$, $L=(3+\sqrt2)R/4\approx6.6213$, $(N_v,M)=(32,38)$ and $\hbar=1.8$ ($\theta_0=5.832$).
Time integration uses the classical fourth-order Runge--Kutta scheme with $\Delta t=0.1$ on $0\le t\le6$.
For each model and statistic, we measure the relative error against the full QBE solution on the same grid:
\begin{equation}
    \mathcal E_{\mathrm{rel}}(t)
    =
    \frac{\|f_{\mathrm{model}}(t)-f_{\mathrm{QBE}}(t)\|_2}
         {\|f_{\mathrm{QBE}}(t)\|_2},
    \label{eq:homogeneous-error}
\end{equation}
where $\|\cdot\|_2$ denotes the discrete $L^2$ norm.

\paragraph{Test 1: a smooth four-peak distribution}
The first initial datum is
\begin{equation}
    f_0^{(1)}(\bm v)
    =
    \frac1{10}\left(\frac3{2\pi}\right)^{3/2}
    \sum_{s=\pm1}
    \left[
        4\exp\!\left(-\frac32|\bm v-s\sqrt2\bm e_1|^2\right)
        +\exp\!\left(-\frac32|\bm v-s\sqrt2\bm e_2|^2\right)
    \right],
    \label{eq:homogeneous-initial-1}
\end{equation}
where $\bm e_1,\bm e_2$ are the first two Cartesian unit vectors.
The higher peaks along $v_1$ give a strongly anisotropic pressure tensor.
We use the hard-sphere kernel
\begin{equation}
    B(|\bm v-\bm v_*|)=\frac1{4\pi}|\bm v-\bm v_*|.
    \label{eq:hard-sphere-kernel}
\end{equation}

Figure~\ref{fig:case1-errors} shows the corresponding relative errors.
For the Fermi and Bose gases, respectively, the BGK errors peak at approximately $0.159$ and $0.224$ near $t=0.8$.
The linear-model peaks are approximately $0.044$ at $t=0.6$ and $0.051$ at $t=0.5$, reducing the peak errors by factors of about $3.6$ and $4.4$.
The linear model remains closer to the full QBE solution throughout the plotted evolution.

\begin{figure}[pos=!htbp]
    \centering
    \includegraphics[width=0.65\linewidth]{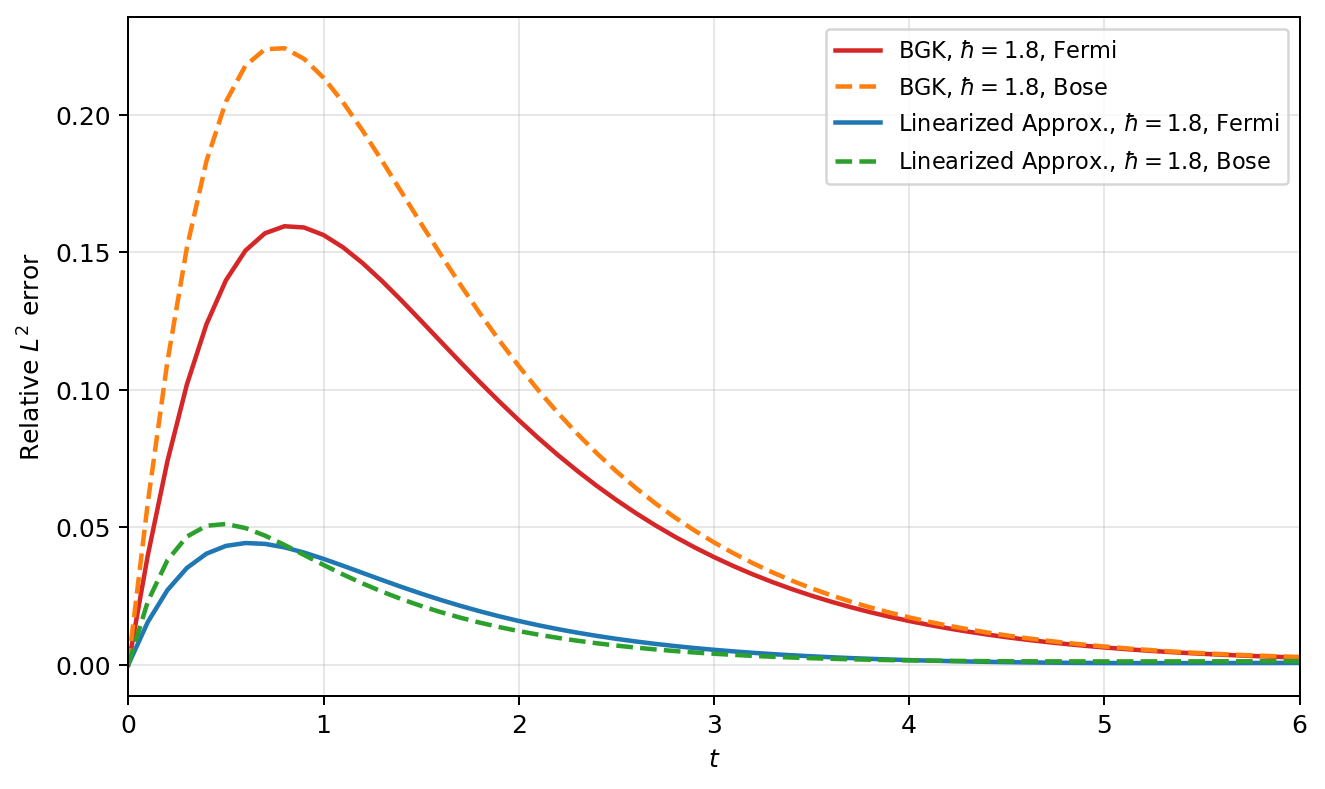}
    \caption{Test 1: relative $L^2$ errors \eqref{eq:homogeneous-error} of the effective linear and quantum BGK models against the full QBE solution, for both statistics at $(N_v,M)=(32,38)$. The legend uses $\hbar=1.8$.}
    \label{fig:case1-errors}
\end{figure}
\FloatBarrier

Figures~\ref{fig:case1-distributions-bose} and~\ref{fig:case1-distributions-fermi} compare the slices $f(t,v_1,v_2,0)$ at $t=1,2,4$ for the Bose and Fermi gases, respectively.
For both statistics, the effective linear model follows the merging of the two peaks along $v_1$ and the subsequent approach to an isotropic profile.
The BGK solution retains more pronounced peaks at $t=1$ and stronger anisotropy at $t=2$.

\begin{figure}[pos=!htbp]
    \centering
    \subfloat[Bose gas ($\tau=+1$)]{%
        \includegraphics[width=0.73\linewidth]{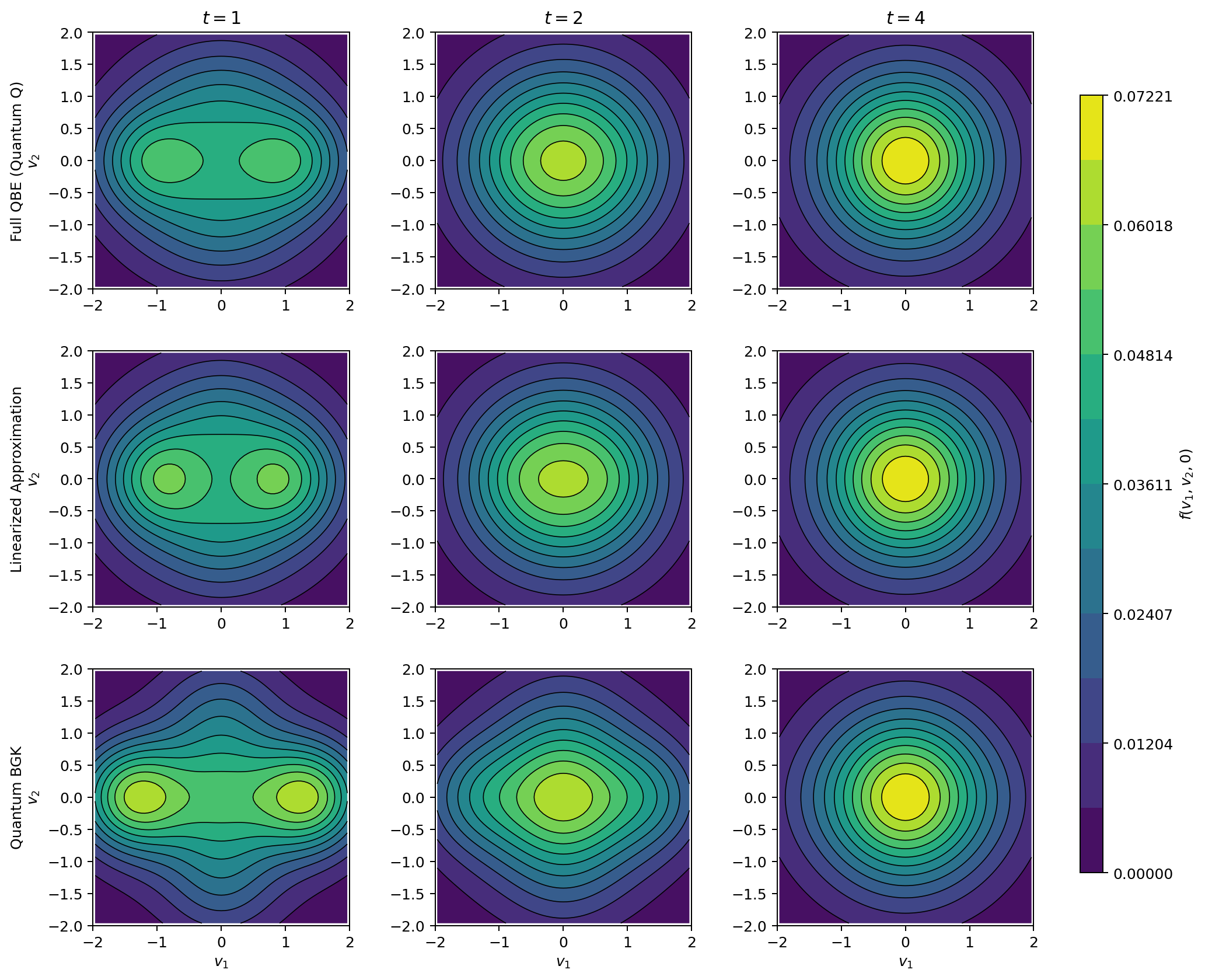}%
        \label{fig:case1-distributions-bose}}
    \par\vspace{3mm}
    \subfloat[Fermi gas ($\tau=-1$)]{%
        \includegraphics[width=0.73\linewidth]{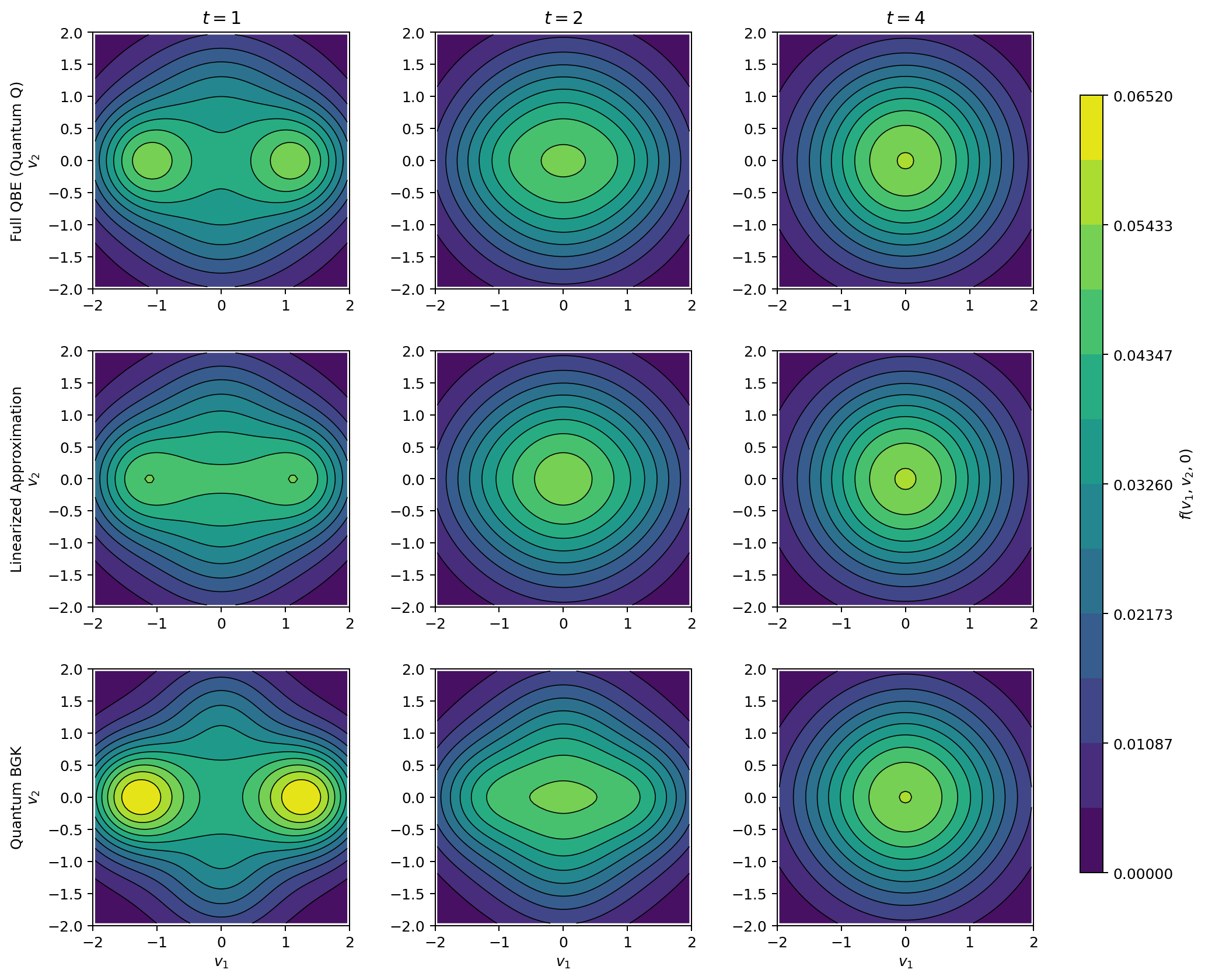}%
        \label{fig:case1-distributions-fermi}}
    \caption{Test 1: slices $f(t,v_1,v_2,0)$ for (a) the Bose gas and (b) the Fermi gas, with $\hbar=1.8$ and $(N_v,M)=(32,38)$. Columns correspond to $t=1,2,4$; rows show the full QBE, effective linear and quantum BGK solutions. Each subfigure has its own colour scale.}
    \label{fig:case1-distributions}
\end{figure}
\FloatBarrier

\paragraph{Test 2: a discontinuous two-temperature distribution}
Following the discontinuous example of~\cite{yin2026fast}, we take
\begin{equation*}
    f_0^{(2)}(\bm v)
    =
    \begin{cases}
    \displaystyle
    \frac65\left(\frac3{4\pi}\right)^{3/2}
    \exp\!\left(-\frac{3|\bm v|^2}{4}\right),
    & v_1>0,
    \\[3mm]
    \displaystyle
    \frac45\left(\frac1{3\pi}\right)^{3/2}
    \exp\!\left(-\frac{|\bm v|^2}{3}\right),
    & v_1<0,
    \end{cases}
\end{equation*}
with the arithmetic mean of the one-sided traces at $v_1=0$.
The variable hard-sphere kernel is
\begin{equation}
    B(|\bm v-\bm v_*|)=\frac1{4\pi}|\bm v-\bm v_*|^{0.56},
    \label{eq:vhs-kernel}
\end{equation}
with the exponent commonly used for argon.

The peak errors in Figure~\ref{fig:case2-errors} are smaller than in Test~1 for both models.
For the Fermi and Bose gases, respectively, the BGK errors peak at approximately $0.068$ and $0.058$, compared with $0.029$ and $0.016$ for the effective linear model.
The peak errors are thus reduced by factors of about $2.3$ and $3.6$.

\begin{figure}[pos=!htbp]
    \centering
    \includegraphics[width=0.82\linewidth]{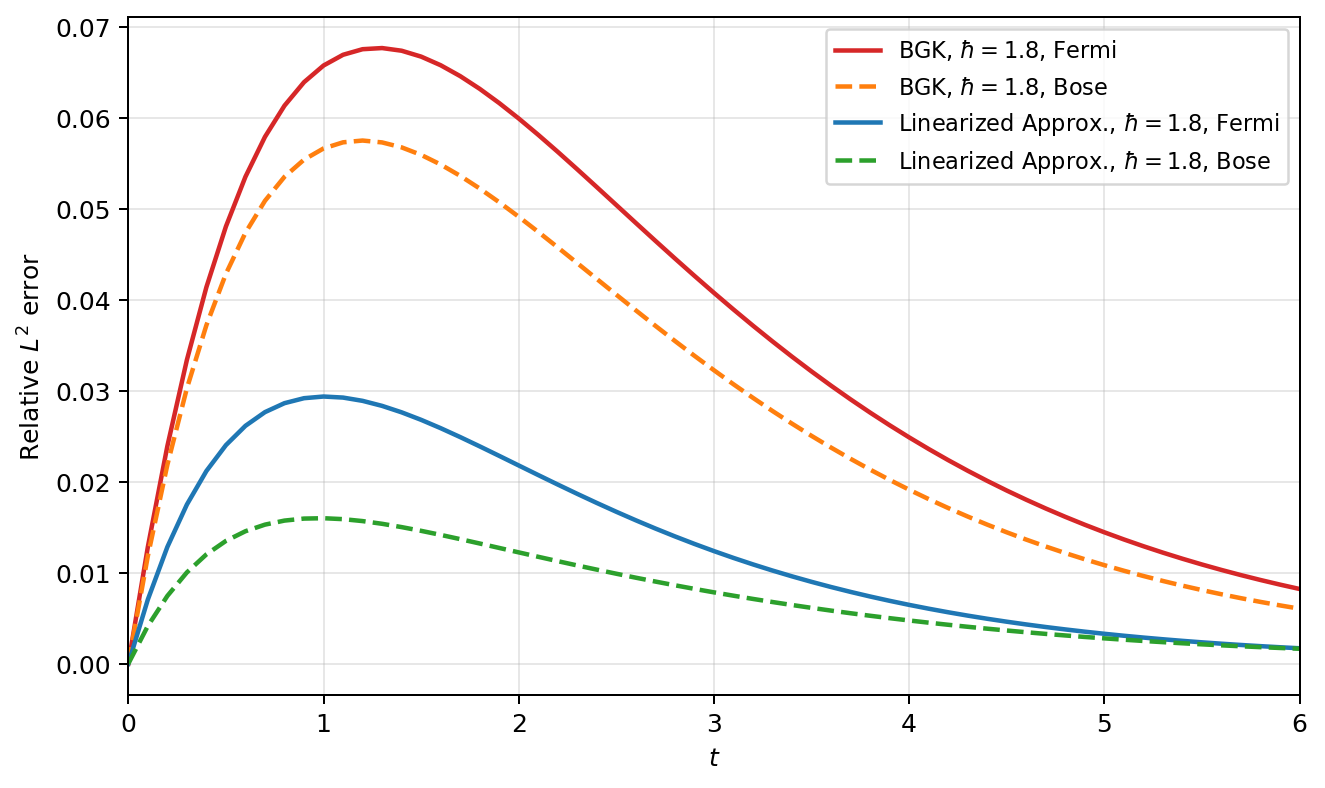}
    \caption{Test 2: relative $L^2$ errors \eqref{eq:homogeneous-error} of the effective linear and quantum BGK models against the full QBE solution, for both statistics at $\hbar=1.8$ and $(N_v,M)=(32,38)$.}
    \label{fig:case2-errors}
\end{figure}
\FloatBarrier

Figures~\ref{fig:case2-distributions-bose} and~\ref{fig:case2-distributions-fermi} show the asymmetric profiles at $t=1$ and their subsequent approach to isotropy for the Bose and Fermi gases, respectively.
For both statistics, the effective linear solution closely follows the full QBE solution, whereas the BGK solution retains more of the initial asymmetry.
The Bose profiles have a sharper central peak than the Fermi profiles.

\begin{figure}[pos=!htbp]
    \centering
    \subfloat[Bose gas ($\tau=+1$)]{%
        \includegraphics[width=0.73\linewidth]{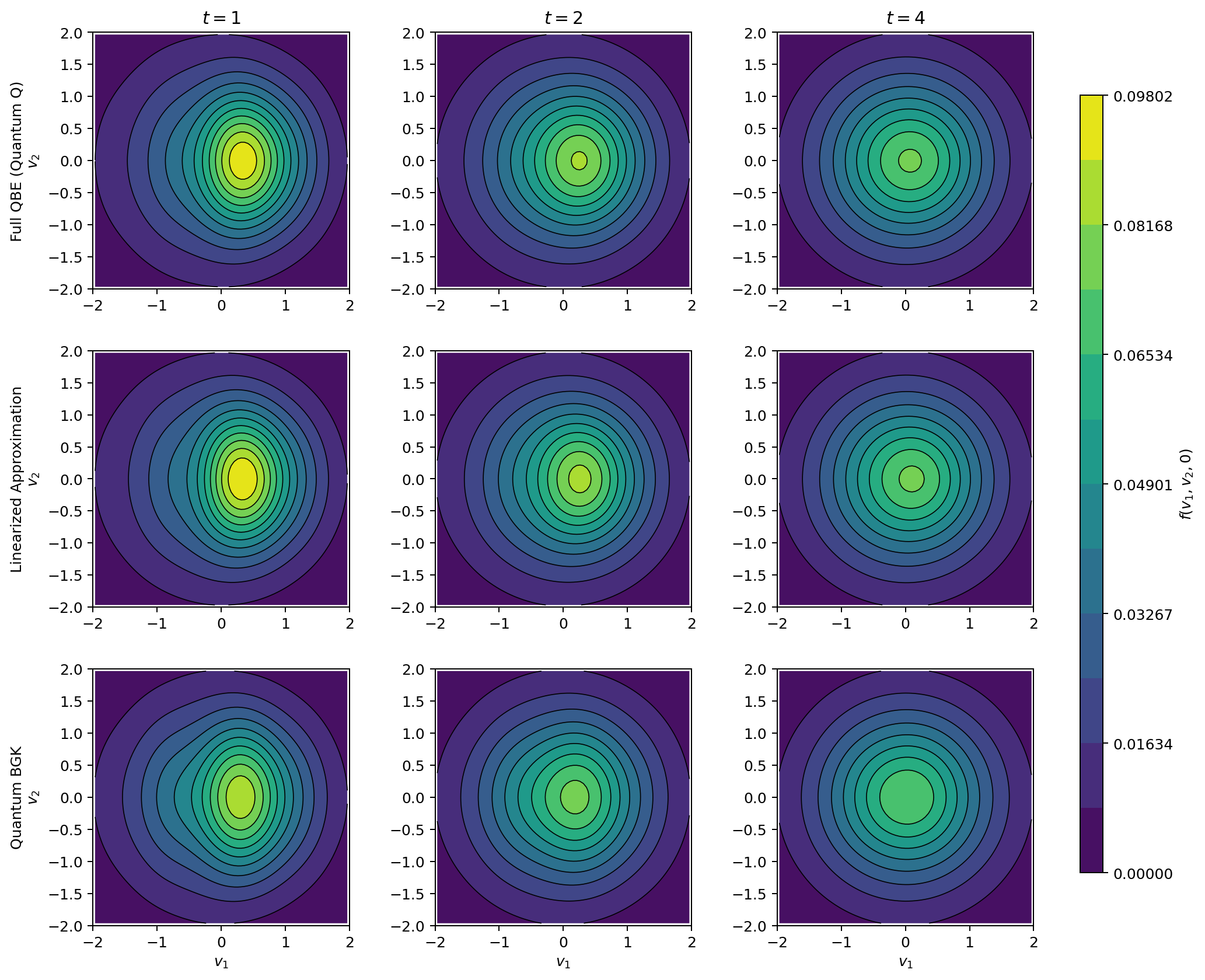}%
        \label{fig:case2-distributions-bose}}
    \par\vspace{3mm}
    \subfloat[Fermi gas ($\tau=-1$)]{%
        \includegraphics[width=0.73\linewidth]{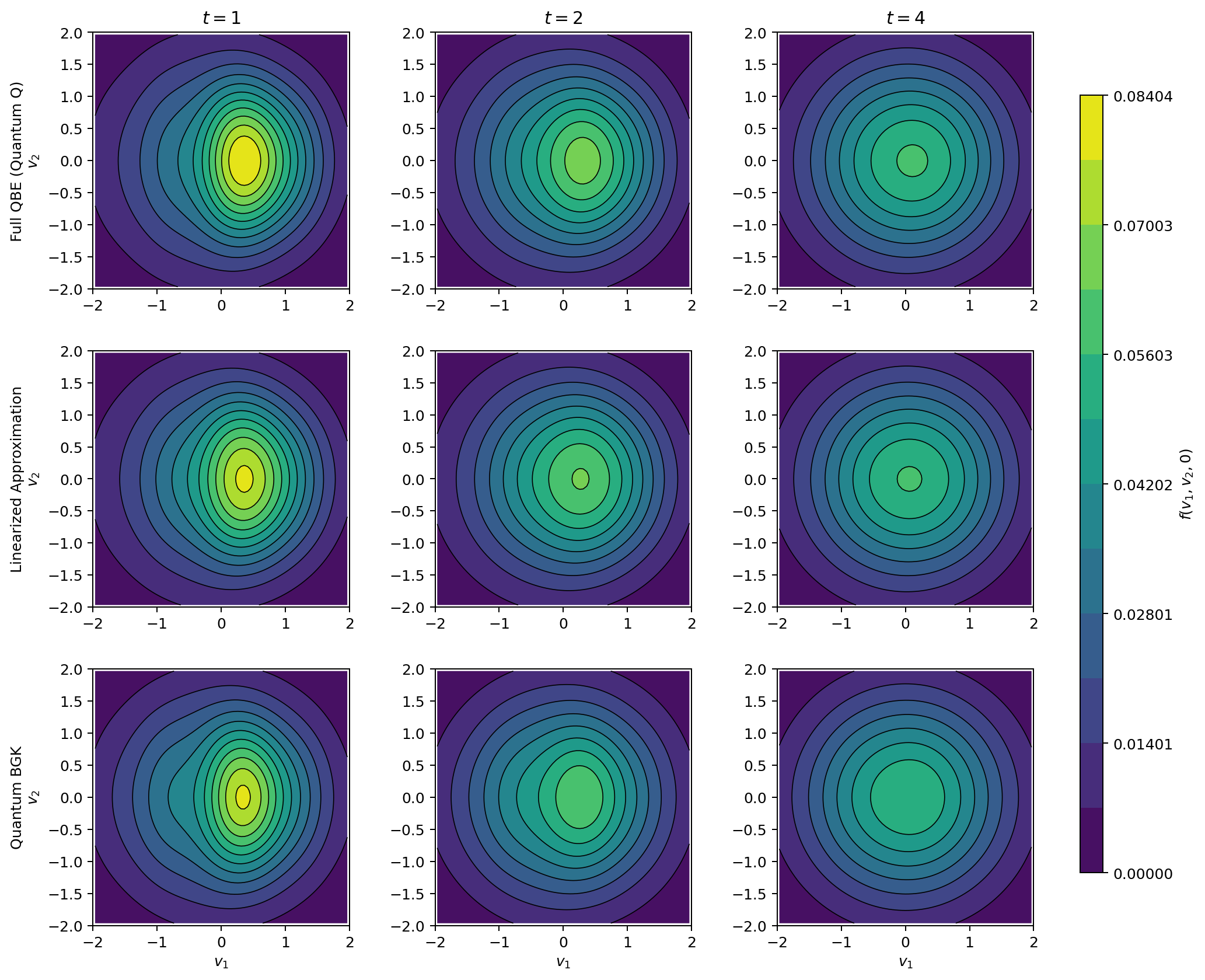}%
        \label{fig:case2-distributions-fermi}}
    \caption{Test 2: slices $f(t,v_1,v_2,0)$ for (a) the Bose gas and (b) the Fermi gas, with $\hbar=1.8$ and $(N_v,M)=(32,38)$. Columns and rows follow Figure~\ref{fig:case1-distributions}. Each subfigure has its own colour scale.}
    \label{fig:case2-distributions}
\end{figure}
\FloatBarrier

\subsection{Couette Flow}
\label{subsec:couette}
We solve the steady problem \eqref{eq:1D_Steady_QBE} on $(x_L,x_R)=(-\tfrac12,\tfrac12)$ for a gas between two plates moving in opposite tangential directions, with wall data
\begin{equation}
    \bm u_L=(0,-0.5,0),\qquad
    \bm u_R=(0,0.5,0),\qquad
    \theta_L=\theta_R=1.
    \label{eq:couette-wall-parameters}
\end{equation}
Here $\theta_L,\theta_R$ are the kinetic wall temperatures defined in \eqref{eq:momentums}.
Wall fugacities, thermodynamic temperatures and incoming traces are determined by \eqref{eq:fugality_and_temperature}, \eqref{eq:BCs} and \eqref{eq:Boundary_Cond_for_1D_QBE}, as in Algorithm~\ref{alg:modified_newton}.

For prescribed total mass $C=1$, the initial state at each interior cell centre $x_i$ is
\begin{equation}
    \label{eq:IV}
    f_{i,j}^{(0)}
    =\frac{1}{\theta_0}
     \frac{1}{(z^{(0)})^{-1}
     \exp\!\left(\frac{|\bm v_j-\bm u^{(0)}(x_i)|^2}{2T^{(0)}}\right)-\tau},\quad
    \bm u^{(0)}(x_i)
    =\frac{x_R-x_i}{x_R-x_L}\bm u_L
     +\frac{x_i-x_L}{x_R-x_L}\bm u_R.
\end{equation}
The constants $z^{(0)},T^{(0)}$ follow from \eqref{eq:fugality_and_temperature} with $\rho^{(0)}=C/(x_R-x_L)=1$ and $\theta^{(0)}=\theta_L=\theta_R=1$:
\begin{equation}
    \label{eq:initial_param}
    \frac{\bigl[\tau\operatorname{Li}_{\frac32}(\tau z^{(0)})\bigr]^{\frac52}}
         {\bigl[\tau\operatorname{Li}_{\frac52}(\tau z^{(0)})\bigr]^{\frac32}}
    =\frac{\theta_0}{(2\pi)^{\frac32}},
    \qquad
    T^{(0)}
    =\frac{\operatorname{Li}_{\frac32}(\tau z^{(0)})}
          {\operatorname{Li}_{\frac52}(\tau z^{(0)})}.
\end{equation}

We use the Maxwell-molecule kernel $B(|\bm v-\bm v_*|)=1/(4\pi)$, cutoff $R=8$ with $L$ given by \eqref{eq:Rcut}, and resolution $(N_x,N_v,M)=(50,48,50)$.
Algorithm~\ref{alg:modified_newton} uses the residual norm \eqref{eq:inner_residual} and stopping tolerances
\begin{equation}
    \label{eq:error_settings}
    \varepsilon_{\mathrm{out}}=10^{-5},\qquad
    \varepsilon_{\mathrm{rel}}=10^{-3},\qquad
    \varepsilon_{\mathrm{abs}}=10^{-7}.
\end{equation}
The runs cover $\Kn\in\{0.1,1,10\}$, $\tau=\pm1$ and $\hbar\in\{1.2,1.8\}$.

\paragraph{Steady profiles}
Figure~\ref{fig:couette-profiles} shows the tangential velocity $u_2$, kinetic temperature $\theta$, and shear stress $P_{12}$, the $(1,2)$ entry of $\mathbb P$ in \eqref{eq:momentums}.
The black dashed curves are references obtained by solving the classical Boltzmann equation with the same prescribed wall velocities and kinetic temperatures.
The velocity is antisymmetric about the channel centre.
As $\Kn$ decreases, its magnitude increases towards the wall speed, and the velocity slip decreases.
Relative to the classical reference, the Bose profiles have larger velocity magnitudes and the Fermi profiles smaller ones; the separation increases with $\hbar$ and is less pronounced at $\Kn=0.1$.

The temperature is symmetric and reaches its maximum at the channel centre, with $\theta>1$ throughout the gas, consistent with viscous heating.
At $\Kn=10$ the profiles are nearly flat.
Decreasing $\Kn$ lowers the temperature excess above the walls but increases its spatial variation.
For $\Kn=10$ and $1$, the Bose temperature lies above the classical reference and the Fermi temperature below it; this ordering reverses at $\Kn=0.1$.
Thus the effect of quantum statistics on the temperature profile depends on the rarefaction level.

The shear stress is negative throughout the channel, consistent with momentum transport opposing the imposed shear.
Its magnitude decreases as $\Kn$ decreases and is smaller for Bose gases and larger for Fermi gases than for the classical reference, with a stronger separation at $\hbar=1.8$.
The profiles are nearly uniform for $\Kn=10$ and $1$, in agreement with $\partial_{x_1}P_{12}=0$ from the steady tangential-momentum balance \eqref{eq:Quantum_Euler} with $u_1=0$.
The visible spatial variation at $\Kn=0.1$ is therefore a numerical defect in this cell-centred moment.

\begin{figure}[pos=!htbp]
    \centering
    \begin{minipage}{0.98\linewidth}
        \small (a) Tangential velocity\par
        \includegraphics[width=\linewidth]{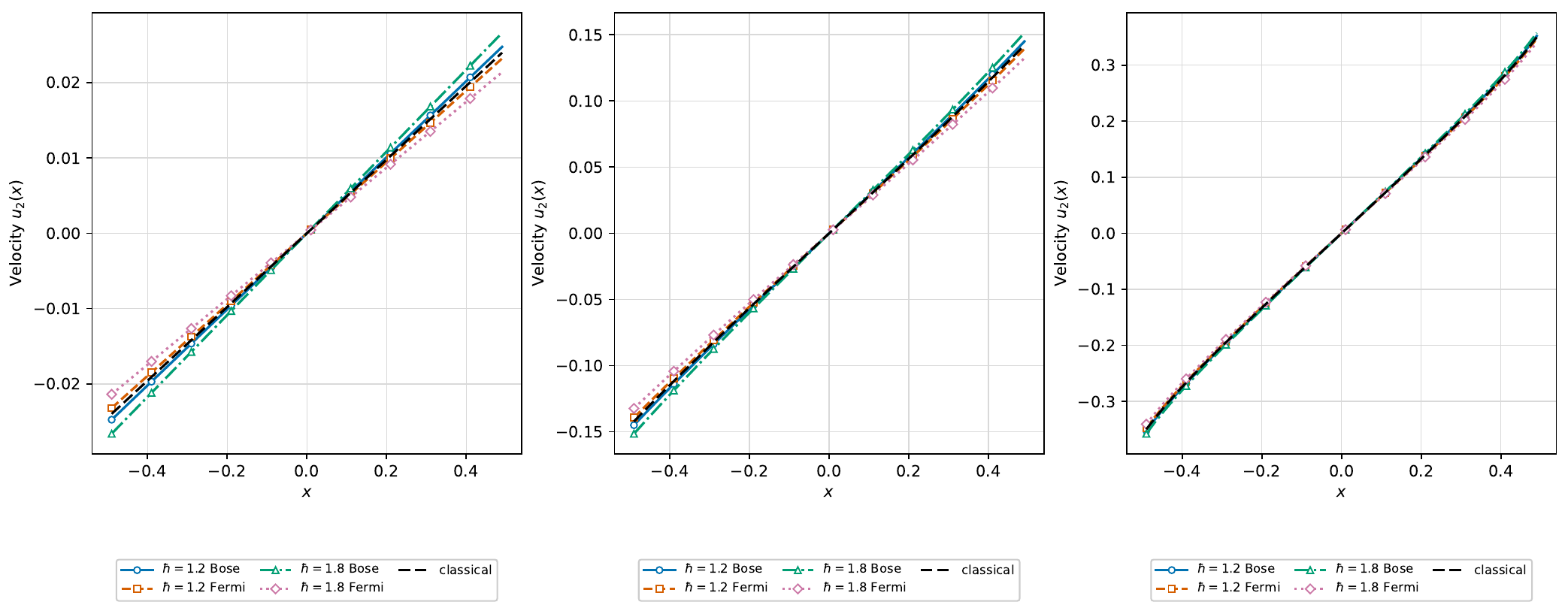}\par\vspace{2mm}
        \small (b) Kinetic temperature\par
        \includegraphics[width=\linewidth]{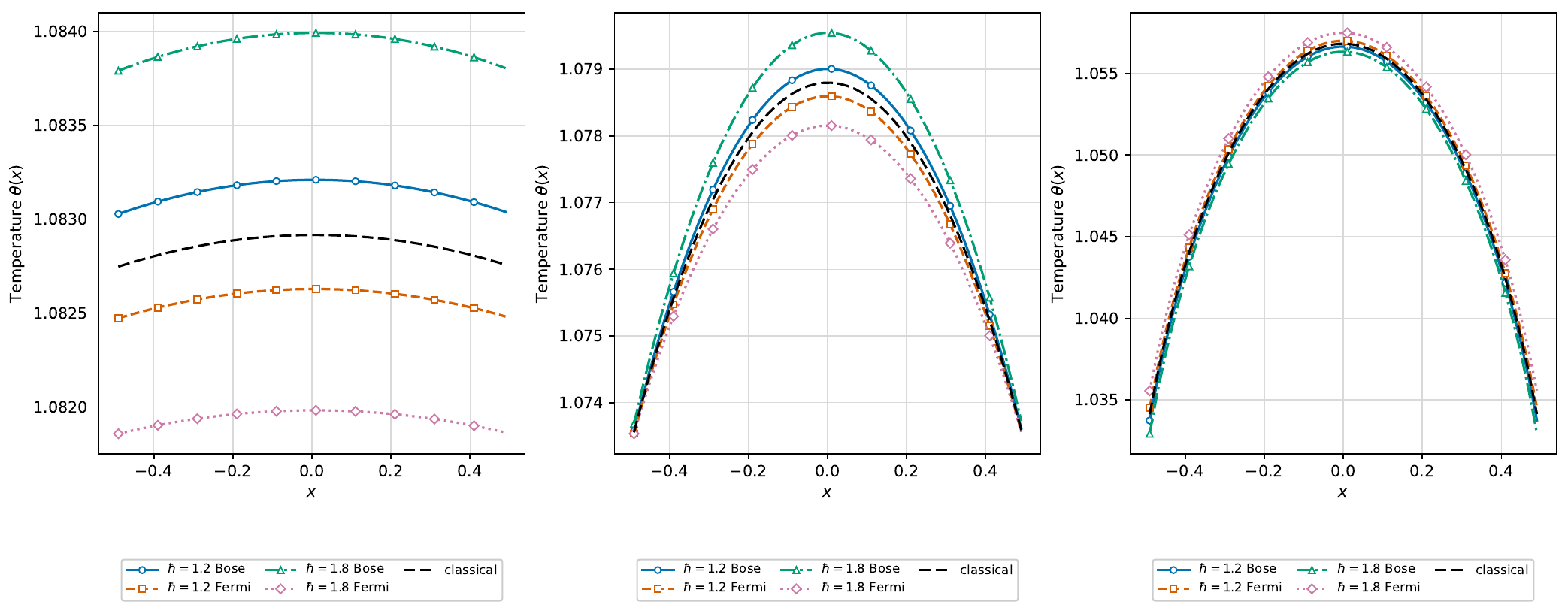}\par\vspace{2mm}
        \small (c) Shear stress\par
        \includegraphics[width=\linewidth]{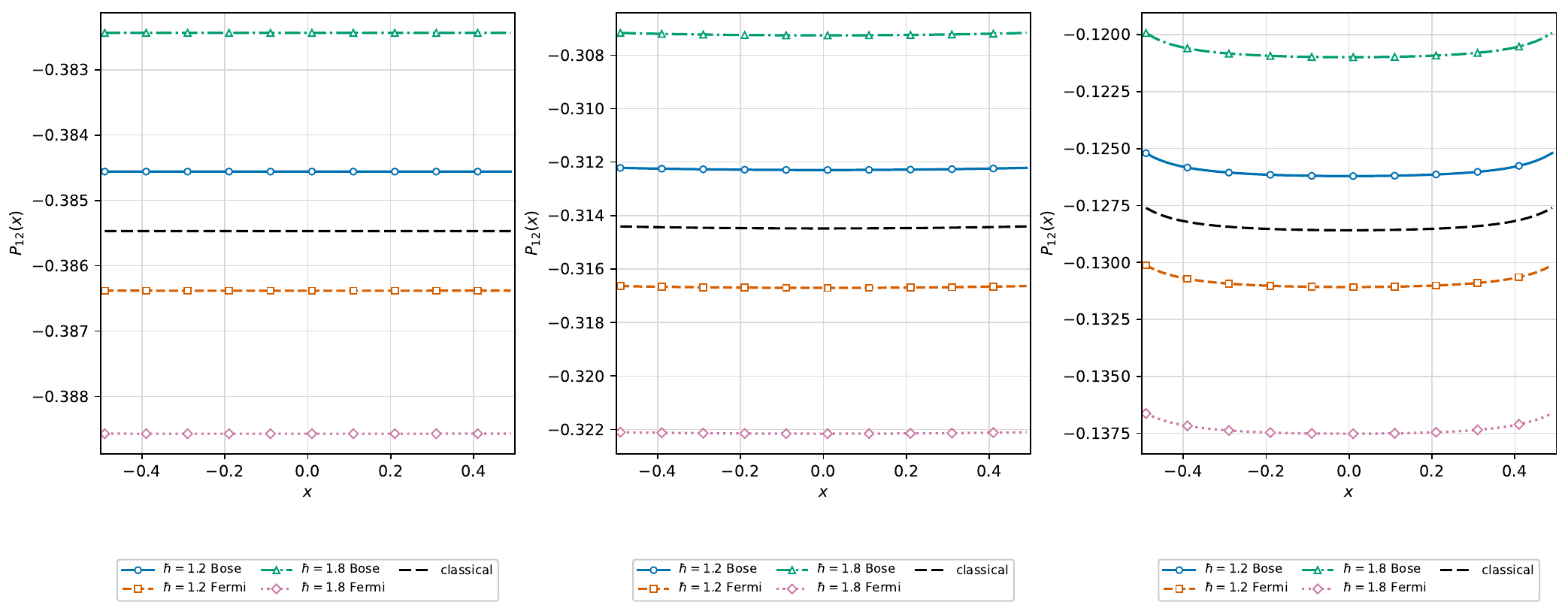}
    \end{minipage}
    \caption{Couette flow: (a) $u_2(x_1)$, (b) $\theta(x_1)$, and (c) $P_{12}(x_1)$ for $\Kn=10,1,0.1$ from left to right. Circles (Bose) and squares (Fermi) indicate $\hbar=1.2$; triangles (Bose) and diamonds (Fermi) indicate $\hbar=1.8$. Black dashed curves show the classical Boltzmann reference. The plotted coordinate $x$ denotes $x_1$. Vertical scales differ between columns.}
    \label{fig:couette-profiles}
\end{figure}
\FloatBarrier

\paragraph{Convergence and computational cost}
We compare the effective Jacobian used in Algorithm~\ref{alg:modified_newton} with a BGK approximation at $\Kn=1$ and $0.1$.
At each outer step $k$, the latter is frozen as
\begin{equation}
    \widetilde{\mathcal L}_{\mathrm{BGK},i}^{(k)}[g]
    =\nu_i^{(k)}
      \bigl(\Pi_{\mathcal M_{q,i}^{(k)}}g-g\bigr),
    \qquad
    \nu_i^{(k)}=\max\{\nu_{\min},\rho_i^{(k)}\},
    \qquad \nu_{\min}=10^{-3},
    \label{eq:bgk-jacobian-comparison}
\end{equation}
where $\mathcal M_{q,i}^{(k)}$ is the local moment-matched equilibrium and $\rho_i^{(k)}$ the density.
The five-moment tangent projection $\Pi_{\mathcal M}$ maps $g$ onto the tangent space of quantum equilibria at $\mathcal M$, preserving its mass, momentum, and energy moments.
Only the approximate collision Jacobian is changed: both methods retain the full QBE residual and all other steps of Algorithm~\ref{alg:modified_newton}.

% Temporarily omitted convergence plot; retained for restoration.
% \begin{figure}[pos=!htbp]
%     \centering
%     \includegraphics[width=0.98\linewidth]{figs/Couette/CouetteFlow_Iterations_Kn1.jpg}
%     \caption{Couette flow at $\Kn=1$: full QBE outer residual norms $\mathcal E_{k,0}$ (left) and relative inner residuals $\mathcal E_{k,l}/\mathcal E_{k,0}$ for the effective Jacobian (right). Rows correspond to $\hbar=1.2$ and $1.8$. Blue and orange denote Bose and Fermi statistics. In the left panels, solid circles and dashed squares distinguish the effective and BGK Jacobians; in the right panels, Newton 1--3 denote the inner solves at $k=0,1,2$.}
%     \label{fig:couette-iterations-kn1}
% \end{figure}

% Temporarily omitted convergence plot; retained for restoration.
% \begin{figure}[pos=!htbp]
%     \centering
%     \includegraphics[width=0.98\linewidth]{figs/Couette/CouetteFlow_Iterations_Kn0p1.jpg}
%     \caption{Couette flow at $\Kn=0.1$, with the same residual definitions, panel arrangement, and curve conventions as Figure~\ref{fig:couette-iterations-kn1}.}
%     \label{fig:couette-iterations-kn01}
% \end{figure}

Table~\ref{tab:newton-couette-iteration-timing} reports iteration counts and computational cost.
Here $T_{\mathrm{tot}}$ is the total solver wall time, including initialization, full nonlinear collision assembly, approximate-Jacobian construction and inner solves.
At $\Kn=1$, the effective Jacobian requires $3$--$5$ outer iterations, compared with $7$ for the BGK Jacobian; at $\Kn=0.1$, the counts are $3$--$5$ and $12$--$13$.
The cumulative inner counts are also lower for the effective Jacobian.
Each outer iteration requires one full QBE collision evaluation, and full nonlinear collision assembly accounts for more than $97\%$ of $T_{\mathrm{tot}}$ in every case.
Reducing full collision evaluations therefore accounts for most of the saving: the BGK runs take $1.33$--$1.99$ times as long at $\Kn=1$ and $2.16$--$3.24$ times as long at $\Kn=0.1$.

\begin{table}[pos=!htbp]
\centering
\caption{Couette flow: iteration counts and computational cost for the effective and BGK Jacobians. Outer counts completed outer iterations, each requiring one full QBE collision evaluation; Inner is the cumulative number of inner iterations. The percentage $p_Q$ is the share of $T_{\mathrm{tot}}$ spent assembling the full nonlinear collision term; $T_{\mathrm{tot}}$ is reported in seconds, rounded to the nearest integer. All runs use the tolerances in \eqref{eq:error_settings} and reach the outer stopping criterion.}
\label{tab:newton-couette-iteration-timing}
\footnotesize
\setlength{\tabcolsep}{4.5pt}
\renewcommand{\arraystretch}{1.08}
\begin{tabular}{@{}ccc rrrr rrrr@{}}
\toprule
 & & & \multicolumn{4}{c}{Effective Jacobian} & \multicolumn{4}{c}{BGK Jacobian} \\
\cmidrule(lr){4-7}\cmidrule(l){8-11}
$\Kn$ & $\hbar$ & Gas & Outer & Inner & $p_Q$ (\%) & $T_{\mathrm{tot}}$ (s) & Outer & Inner & $p_Q$ (\%) & $T_{\mathrm{tot}}$ (s) \\
\midrule
$1$ & $1.2$ & Bose & 3 & 19 & 98.68 & 4780 & 7 & 35 & 98.92 & 9530 \\
 &  & Fermi & 3 & 18 & 98.74 & 4785 & 7 & 36 & 98.89 & 9531 \\
 & $1.8$ & Bose & 5 & 31 & 98.59 & 7196 & 7 & 37 & 98.86 & 9554 \\
 &  & Fermi & 4 & 24 & 98.67 & 5993 & 7 & 36 & 98.88 & 9553 \\
\midrule
$0.1$ & $1.2$ & Bose & 4 & 51 & 97.13 & 6086 & 12 & 125 & 97.55 & 15749 \\
 &  & Fermi & 3 & 39 & 97.23 & 4862 & 12 & 129 & 97.46 & 15749 \\
 & $1.8$ & Bose & 5 & 63 & 97.05 & 7307 & 12 & 131 & 97.40 & 15748 \\
 &  & Fermi & 4 & 50 & 97.19 & 6092 & 13 & 141 & 97.41 & 16960 \\
\bottomrule
\end{tabular}
\end{table}

\FloatBarrier

\subsection{Fourier Flow}
\label{subsec:fourier}
We consider steady heat conduction between stationary plates with wall data
\begin{equation}
    \bm u_L=\bm u_R=\bm 0,\qquad
    \theta_L=1,\qquad \theta_R=3.
    \label{eq:fourier-wall-parameters}
\end{equation}
The spatial domain, total mass, collision kernel, cutoff, grid, and parameter sweep are those of Section~\ref{subsec:couette}, as are the boundary treatment and stopping tolerances \eqref{eq:error_settings}.

The initial state is given by \eqref{eq:IV} with $\bm u^{(0)}=\bm 0$ and spatially varying parameters $z^{(0)}(x_i),T^{(0)}(x_i)$.
These are determined at each interior cell centre from \eqref{eq:fugality_and_temperature} with
\begin{equation*}
    \rho^{(0)}=1,\qquad
    \theta^{(0)}(x_i)
    =\frac{x_R-x_i}{x_R-x_L}\theta_L
     +\frac{x_i-x_L}{x_R-x_L}\theta_R.
\end{equation*}

\paragraph{Steady profiles}
Figure~\ref{fig:fourier-profiles} shows the kinetic temperature, normal heat flux, and relative normal-stress deviation $(P_{11}-p)/p$, with $P_{11}$ and $p=\rho\theta$ defined in \eqref{eq:momentums}.

As $\Kn$ decreases, the bulk temperature gradient increases and the temperature jumps at the plates decrease.
The Bose and Fermi temperature profiles remain close and cross within the channel, so neither statistic gives a uniformly higher temperature.
Their heat fluxes are more clearly separated: relative to the classical Boltzmann reference, Bose statistics reduce $|q_1|$ and Fermi statistics increase it, with a larger separation at $\hbar=1.8$.
In every case $q_1<0$, corresponding to heat transport from the hot right plate to the cold left plate.
The heat-flux magnitude decreases as $\Kn$ decreases over the range considered. The spatial variation of $q_1$ is a spatial-discretization error that depends on the mesh size and is more pronounced at smaller $\Kn$.

\begin{figure}[pos=!htbp]
    \centering
    \begin{minipage}{0.98\linewidth}
        \small (a) Kinetic temperature\par
        \includegraphics[width=\linewidth]{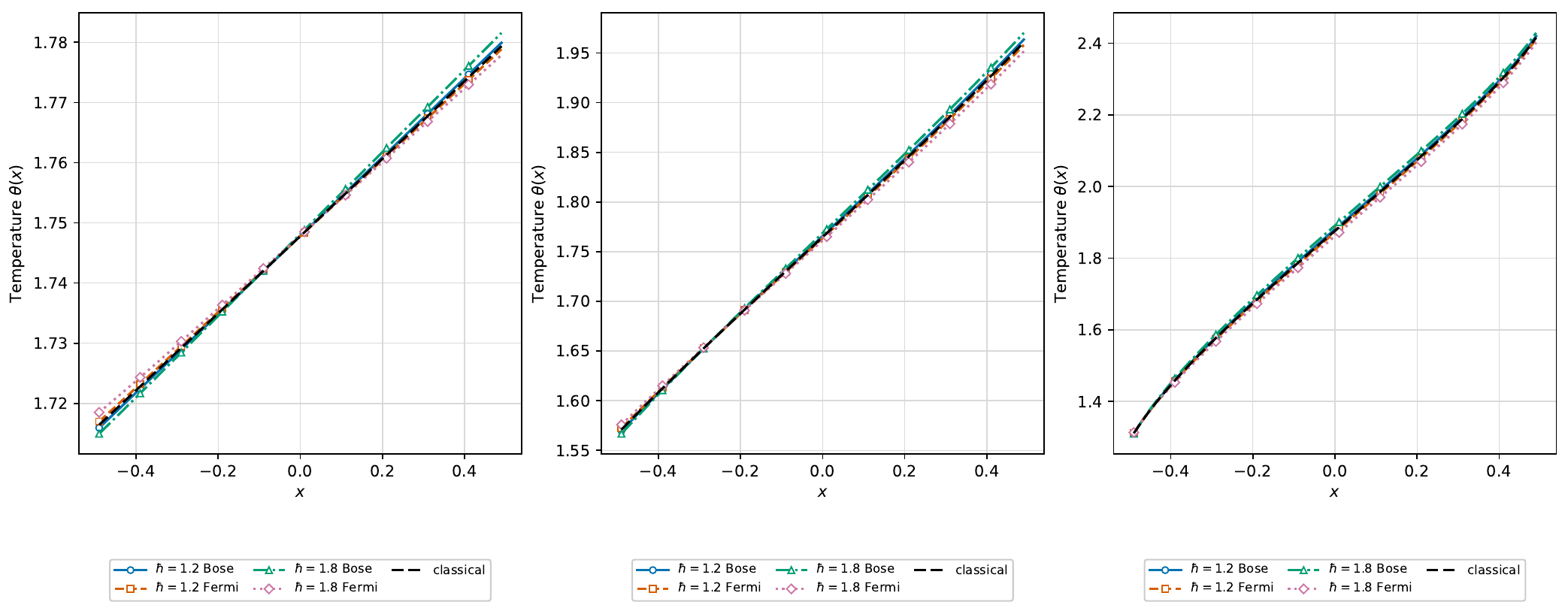}\par\vspace{2mm}
        \small (b) Normal heat flux\par
        \includegraphics[width=\linewidth]{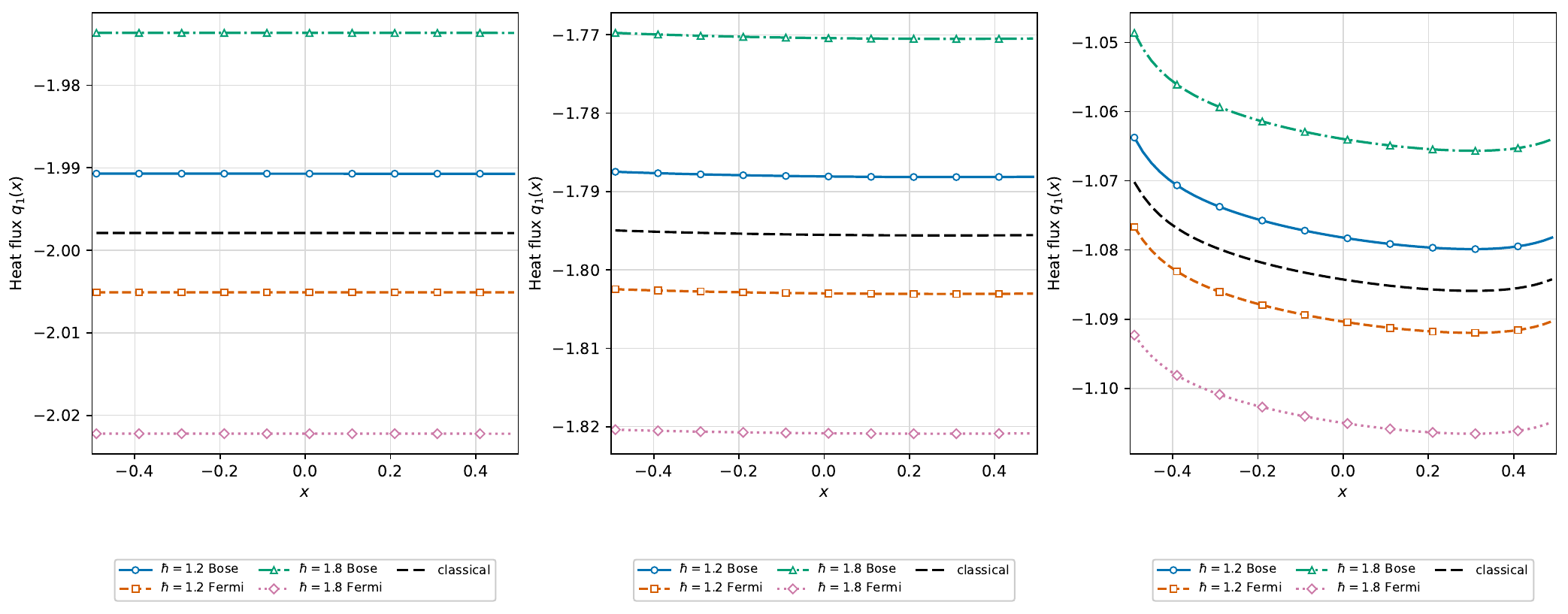}\par\vspace{2mm}
        \small (c) Relative normal-stress deviation\par
        \includegraphics[width=\linewidth]{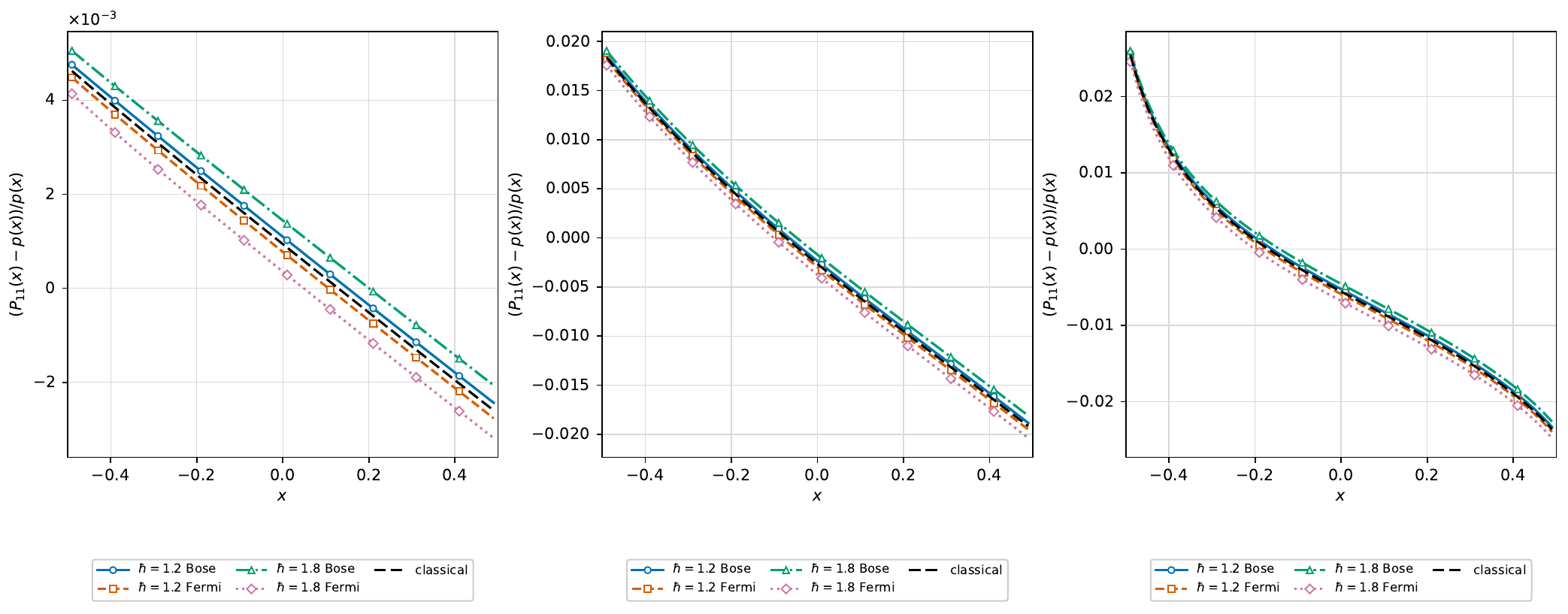}
    \end{minipage}
    \caption{Fourier flow: (a) $\theta(x_1)$, (b) $q_1(x_1)$, and (c) $(P_{11}(x_1)-p(x_1))/p(x_1)$, with $p=\rho\theta$ as in \eqref{eq:momentums}, for $\Kn=10,1,0.1$ from left to right. Quantum cases and classical reference follow the curve conventions of Figure~\ref{fig:couette-profiles}; the plotted coordinate is $x=x_1$. Vertical scales differ between columns.}
    \label{fig:fourier-profiles}
\end{figure}
\FloatBarrier

% The heat flux is nearly uniform for $\Kn=10$ and $1$, but retains a visible spatial variation at $\Kn=0.1$.
% For the continuous steady problem with zero bulk velocity, energy conservation gives $\partial_{x_1}q_1=0$.
% The remaining variation is therefore a numerical defect in this cell-centred diagnostic; convergence of the nonlinear residual does not by itself establish spatial accuracy of the heat flux.

The relative normal-stress deviation is positive near the cold plate and negative near the hot plate, showing a departure from the isotropic equilibrium pressure tensor.
Over the range considered, the wall-side deviations grow as $\Kn$ decreases, with more pronounced curvature at $\Kn=0.1$.
The Bose curves lie above the classical reference and the Fermi curves below it, with larger shifts at $\hbar=1.8$.
This spatial variation is compatible with $\partial_{x_1}P_{11}=0$ from the steady normal-momentum balance \eqref{eq:Quantum_Euler}, since the scalar pressure $p$ need not be constant.

\paragraph{Convergence and computational cost}
We compare the effective and BGK Jacobians under the same protocol as Section~\ref{subsec:couette}.

% Temporarily omitted convergence plot; retained for restoration.
% \begin{figure}[pos=!htbp]
%     \centering
%     \includegraphics[width=0.98\linewidth]{figs/Fourier/FourierFlow_Iterations_Kn1.jpg}
%     \caption{Fourier flow at $\Kn=1$: full QBE outer residual norms for the effective and BGK Jacobians (left), and relative inner residuals for the first three outer steps with the effective Jacobian (right). Rows correspond to $\hbar=1.2$ and $1.8$; residual definitions and curve conventions are as in Figure~\ref{fig:couette-iterations-kn1}.}
%     \label{fig:fourier-iterations-kn1}
% \end{figure}

% Temporarily omitted convergence plot; retained for restoration.
% \begin{figure}[pos=!htbp]
%     \centering
%     \includegraphics[width=0.98\linewidth]{figs/Fourier/FourierFlow_Iterations_Kn0p1.jpg}
%     \caption{Fourier flow at $\Kn=0.1$, with the same panel arrangement and curve conventions as Figure~\ref{fig:fourier-iterations-kn1}.}
%     \label{fig:fourier-iterations-kn01}
% \end{figure}

Table~\ref{tab:newton-fourier-iteration-timing} gives $4$ outer iterations for the effective Jacobian at $\Kn=1$, compared with $8$--$9$ for the BGK Jacobian; at $\Kn=0.1$, the counts are $5$ and $17$.
The cumulative inner counts are also lower for the effective Jacobian in every case.
Full nonlinear collision assembly accounts for more than $95\%$ of $T_{\mathrm{tot}}$.
The BGK runs take $1.86$--$2.14$ times as long at $\Kn=1$ and $3.11$--$3.25$ times as long at $\Kn=0.1$, so the reduction in total wall time is greater at the smaller Knudsen number.

\begin{table}[pos=!htbp]
\centering
\caption{Fourier flow: iteration counts and computational cost for the effective and BGK Jacobians. Outer, Inner, $p_Q$ and $T_{\mathrm{tot}}$ follow Table~\ref{tab:newton-couette-iteration-timing}; all runs use the tolerances in \eqref{eq:error_settings} and reach the outer stopping criterion.}
\label{tab:newton-fourier-iteration-timing}
\footnotesize
\setlength{\tabcolsep}{4.5pt}
\renewcommand{\arraystretch}{1.08}
\begin{tabular}{@{}ccc rrrr rrrr@{}}
\toprule
 & & & \multicolumn{4}{c}{Effective Jacobian} & \multicolumn{4}{c}{BGK Jacobian} \\
\cmidrule(lr){4-7}\cmidrule(l){8-11}
$\Kn$ & $\hbar$ & Gas & Outer & Inner & $p_Q$ (\%) & $T_{\mathrm{tot}}$ (s) & Outer & Inner & $p_Q$ (\%) & $T_{\mathrm{tot}}$ (s) \\
\midrule
$1$ & $1.2$ & Bose & 4 & 30 & 97.85 & 5563 & 8 & 48 & 98.44 & 10347 \\
 &  & Fermi & 4 & 28 & 98.01 & 5553 & 9 & 54 & 98.39 & 11562 \\
 & $1.8$ & Bose & 4 & 29 & 98.28 & 5527 & 8 & 49 & 98.37 & 10538 \\
 &  & Fermi & 4 & 29 & 98.28 & 5529 & 9 & 54 & 98.40 & 11823 \\
\midrule
$0.1$ & $1.2$ & Bose & 5 & 66 & 95.54 & 6925 & 17 & 204 & 95.83 & 21522 \\
 &  & Fermi & 5 & 66 & 95.54 & 6917 & 17 & 207 & 95.74 & 21789 \\
 & $1.8$ & Bose & 5 & 68 & 96.59 & 6861 & 17 & 203 & 96.22 & 22307 \\
 &  & Fermi & 5 & 72 & 96.38 & 6883 & 17 & 210 & 96.07 & 22319 \\
\bottomrule
\end{tabular}
\end{table}

\FloatBarrier

\section{Conclusion}\label{sec:conclusion}
%Para1: Summary
We constructed an effective linear operator $\widetilde{\mathcal{L}}_q$ for the quantum Boltzmann equation that preserves mass, momentum and energy and satisfies a linearized $H$-theorem.
Its evaluation is substantially cheaper than \texttt{FullQBE}, and the operator tests show closer agreement with the exact linearization at smaller $\hbar$.
As a collision model, it incurs modeling error but gives smaller errors than the quantum BGK model in the homogeneous relaxation tests (Section~\ref{sec:spatial_homo}).
As an approximate collision Jacobian, it reduces the number of full collision evaluations in the modified Newton solver for steady Couette and Fourier flows (Sections~\ref{subsec:couette} and~\ref{subsec:fourier}).
The solver retains the full nonlinear residual, so the Jacobian approximation leaves the target steady equation unchanged.
%Para2: Meaning

In the fast spectral framework used here, the quantum and classical collision operators have the same asymptotic complexity, but the additional cubic terms typically increase the constant factor for the quantum operator.
The effective linear approximation reduces this cost and makes finer velocity grids (larger $N_v$) more affordable for simulations with the linear model.
This offers a practical tool for studying quantum kinetic relaxation where the modeling error is acceptable, particularly for smaller values of the rescaled Planck constant $\hbar$.
%Para3: Future works (FW1:Bose-Einstein Condense, FW2:Effective FullQBE) 

Two directions merit further investigation.
One is to extend the present approach to Bose gases with stronger quantum effects, including regimes in which Bose--Einstein condensation may occur.
The other is to explore operator-learning approximations to the full nonlinear collision operator $\mathcal{Q}_q$, whose evaluation dominates the computational cost of the modified Newton solver in the reported tests.

\printcredits

\section*{Acknowledgments}
Zhenning Cai was partially supported by the Academic Research Fund of
the Ministry of Education of Singapore (Grant No. A-8002392-00-00).

%% Loading bibliography style file
%\bibliographystyle{model1-num-names}
\bibliographystyle{cas-model2-names}

% Loading bibliography database
% Keep the final reference on the bibliography page.
\renewcommand{\bibfont}{\fontsize{8pt}{9.6pt}\selectfont}
\bibliography{cas-refs}

% Biography
%\bio{}
% Here goes the biography details.
%\endbio

%\bio{pic1}
% Here goes the biography details.
%\endbio

\end{document}